\title{Input Selection for Performance and Controllability of Structured Linear Descriptor Systems\thanks{A preliminary version of this work appeared at the 51st IEEE Conference on Decision and Control (CDC), 2012.}}
\author{Andrew Clark, Basel Alomair, Linda Bushnell, and Radha Poovendran\thanks{A. Clark, L. Bushnell, and R. Poovendran are with the University of Washington, Seattle, Washington. Email: \{awclark, lb2,rp3\}@uw.edu.  B. Alomair is with the National Center for Cybersecurity Technology, King Abdulaziz City for Science and Technology, Riyadh, Saudi Arabia.   Email: alomair@kacst.edu.sa}}
\begin{document}
\maketitle
\slugger{mms}{xxxx}{xx}{x}{x--x}

\begin{abstract}
A common approach to controlling complex networks is to directly control a subset of input nodes, which then controls the remaining nodes via network interactions.  While techniques have been proposed for selecting input nodes based on either performance metrics or controllability, a unifying approach based on joint consideration of performance and controllability is an open problem.  In this paper, we develop a submodular optimization framework for selecting input nodes based on joint performance and controllability in structured linear descriptor systems.  We develop our framework for arbitrary linear descriptor systems.  In developing our framework, we first prove that selecting a minimum-size set of input nodes for controllability  is a matroid intersection problem that can be solved in polynomial-time in the network size.  We then prove that input selection to maximize a  performance metric with controllability as a constraint is equivalent to maximizing a monotone submodular function with two matroid basis constraints, and derive efficient approximation algorithms with provable optimality bounds for input selection.  Finally, we present a  graph controllability index metric, which characterizes the largest controllable subgraph of a given complex network, and prove its submodular structure, leading to input selection algorithms that trade-off performance and controllability.   We provide improved optimality guarantees for known systems such as strongly connected networks, consensus networks, networks of double integrators, and networks where all system parameters (e.g., link weights) are chosen independently and at random.

\end{abstract}

\begin{keywords} Linear descriptor systems, controllability, submodular optimization, matroids, matroid intersection, networked control systems, structured systems \end{keywords}

\begin{AMS} 93B05, 68W25, 90C27, 93C05 \end{AMS}

\pagestyle{myheadings}
\thispagestyle{plain}
\markboth{Input Selection for Performance and Controllability}{A. Clark, B. Alomair, L. Bushnell, and R. Poovendran}

\section{Introduction}
\label{sec:intro}
Complex networks consist of distributed nodes with locally coupled dynamics in domains including intelligent transportation systems \cite{wang1996coordination}, social networks \cite{ghaderi2012opinion}, energy systems \cite{rohden2012self}, and biological networks \cite{mangan2003structure}.  In many of these applications, the complex network must be controlled to reach a desired state, for example, steering a group of unmanned vehicles to a certain formation \cite{lawton2003decentralized}.	A common approach to controlling complex networks is to directly control one or more states of a subset of nodes, denoted as input nodes, while relying on the local coupling to drive the remaining nodes to the desired state \cite{hong2006tracking,ji2006leader}.

An important design parameter when controlling complex networks is the set of nodes that act as inputs.
 The problem of selecting a minimum-size set of input nodes to control a complex network has achieved significant  attention \cite{liu2011controllability,pasqualetti2013controllability,ruths2014control}, with  recent work focusing on selecting input nodes to satisfy  controllability, defined as the ability to drive the network from any initial state to any desired state in finite time using the input nodes.
 Since the seminal work of \cite{liu2011controllability}, a variety of discrete optimization methods have been proposed for selecting input nodes to achieve controllability \cite{olshevsky2014minimum,pasqualetti2013controllability,ruths2014control}.  

 In addition to controllability, networked systems are expected to satisfy performance criteria, including convergence rate of the dynamics \cite{pasqualetti2008steering} and robustness to noise and disturbances \cite{bamieh2011coherence,patterson2010leader}.  Approaches for selecting input nodes based on these criteria, including submodular optimization \cite{clark2014convergence,clark2014leader}, convex relaxation \cite{lin2013leader}, and combinatorial \cite{fitch2013information} algorithms, have been proposed in the literature, but are largely disjoint from the algorithms for selecting input nodes to satisfy controllability.	At present, a unifying and computationally efficient approach for selecting input nodes based on both controllability and performance is not available in the  literature.

	Current approaches to input selection assume either that the system matrices are fully known, or that only the structure of the system matrices is known.  In the latter case, the system matrix consists of zero entries and free parameters, where the free parameters can take any arbitrary value.
	Many practical systems, however, lie between these two extremes, with either a mixture of known and unknown matrix entries, or structural relationships between the unknown entries (i.e., all of the entries of a row sum to zero, as in the case of linear consensus algorithms \cite{jadbabaie2003coordination}).
 	In \cite{chapman2013strong}, it was observed that a set of selected input nodes may not satisfy controllability if these structural properties are not taken into account. 
 	While  input selection methods that incorporate system structure have been proposed for specific applications such as consensus \cite{goldin2013weight}, a computationally tractable general framework that guarantees controllability of structured systems remains an open problem.  

 \subsection{Our Contributions}
 \label{subsec:contributions}
In this paper, we develop a submodular optimization framework for input selection based on joint consideration of performance and controllability in structured linear descriptor systems. The submodular structure implies that a variety of input selection problems can be solved up to a $(1-1/e)$ optimality bound, using algorithms that depend on the constraints of each problem. We first  show that selecting the minimum-size set of input nodes to satisfy  controllability of structured linear descriptor systems can be mapped to a maximum-cardinality matroid intersection problem, leading to the first polynomial-time algorithm for ensuring controllability of such systems.  We then investigate selecting a set of up to $k$ input nodes to maximize a performance metric while satisfying a controllability constraint, and prove that this problem is equivalent to submodular maximization with two matroid basis constraints.  We develop a randomized algorithm for solving a relaxed, continuous version of the problem with a $(1-1/e)$ optimality bound, which can be rounded to a feasible input set that satisfies controllability.  As a third problem, we relax the requirement that the system is controllable and select input nodes based on a trade-off between performance and controllability.  In this case, we prove that the problem has the structure of submodular maximization subject to a cardinality constraint, leading to $(1-1/e)$ optimality bound.  

We next study input selection when the complex network is strongly connected (i.e., there exists a directed path between any two nodes).  We prove that, for almost all systems with a given structure, the controllability of the networked system can be represented as a single matroid constraint.  Based on this result, we derive a linear-time algorithm for selecting input nodes for controllability in structured linear descriptor systems, and prove that this algorithm is guaranteed to select the minimum-size input set.    We further show that the problem of selecting a set of up to $k$ input nodes to optimize performance subject to a controllability constraint can be solved with optimality bound of $(1-1/e)$ when the network is strongly connected.

  We investigate three special cases of our framework, arising from  classes of structured systems that have been studied in the existing literature, namely, linear consensus \cite{jadbabaie2003coordination}, second-order integrator dynamics \cite{ren2008double}, and systems in which all nonzero parameters can take arbitrary values \cite{liu2011controllability}.  We show that our general approach achieves at least the same optimality guarantees compared to the current state of the art for each individual problem.  Our results are illustrated via numerical study in the special case of a consensus network.


\subsection{Related Work}
\label{subsec:related}
Structural controllability of linear systems with given inputs was first studied by Lin in \cite{lin1974structural}, in the case where all nonzero matrix entries are independent free parameters.  Controllability of systems with additional relationships between the matrix entries was considered subsequently \cite{corfmat1976structurally,murota1987refined,reinschke1997digraph}.  In \cite{reinschke1997digraph}, graph-based conditions for structural controllability of descriptor systems were introduced.  The work of \cite{murota1987refined} provided a matroid-based framework for structural controllability of mixed-matrix descriptor systems, containing both fixed and free entries, as well as polynomial-time algorithms for verifying controllability of such systems.  For a detailed survey of controllability results in linear descriptor systems, see \cite{dion2003generic}.  In these works, conditions and algorithms for verifying structural controllability with a given input set are provided, but the problem of selecting the input nodes is not considered.

Selecting input nodes to satisfy controllability has been extensively studied in recent years.  Necessary and sufficient conditions for a set of input nodes to guarantee controllability in leader-follower consensus dynamics were presented in \cite{tanner2004controllability}.  Graph-based necessary conditions were derived in \cite{rahmani2009controllability}.  These works considered controllability from a given set of input nodes, but did not introduce efficient algorithms for selecting the input nodes.  In  \cite{liu2011controllability}, a polynomial-time graph matching algorithm was introduced for selecting a minimum-size set of input nodes to satisfy structural controllability.  The problem of selecting input nodes for controllability was further considered in \cite{clark2012leader,pasqualetti2013controllability,ruths2014control} for the case where all matrix entries are either zero or are free, independent parameters.  For the case where all entries of the system matrices are fully known, input selection algorithms and optimality bounds were derived in \cite{olshevsky2014minimum}.  In the present paper, we consider a broader class of system matrices that contain both free parameters and fixed entries, taking the existing works as special cases.  


Input selection based on performance criteria has also received research interest.  In \cite{clark2014convergence,clark2014leader}, submodular optimization approaches for selecting input nodes for robustness to noise and smooth convergence to a desired state were  developed.  A convex relaxation approach for minimizing errors due to link and state noise was proposed in \cite{fardad2011noisefree,lin2011noisecorrupted,lin2013leader}.  Combinatorial algorithms for input selection to minimize the $H_{2}$-norm, using information centrality, were introduced in \cite{fitch2013information}. 
\subsection{Organization}
\label{subsec:organization}
The paper is organized as follows.  In Section \ref{sec:preliminaries}, we present our system model, definitions and sufficient conditions for controllability, background on submodularity and matroid theory, and examples of performance metrics that can be incorporated into our framework.  In Section \ref{sec:general}, we develop our submodular optimization framework for input selection in structured systems.  Section \ref{sec:connected} discusses input selection in strongly connected network, and shows how connectivity improves the optimality bounds of our approach.  Section \ref{sec:cases} presents three special cases of our framework found in the existing literature.  Section \ref{sec:numerical} contains a numerical study.  Section \ref{sec:conclusion} concludes the paper. 
\section{Model and Preliminaries}
\label{sec:preliminaries}
In this section, we describe the system model and definitions of controllability considered.  Background on matroid theory and submodularity is given.  We then present sufficient conditions for controllability from previous work based on matroids and an auxiliary graph construction.  Finally, we give examples of performance metrics that can be incorporated into our framework.    

\subsection{System Model}
\label{subsec:sys_model}
We consider a linear, time-invariant networked system with a total of $n$ states, where $\mathbf{x}(t) \in \mathbb{R}^{n}$ is the vector of states at time $t$. In the absence of any control inputs, the system dynamics are described by
\begin{equation}
\label{eq:dynamics_no_inputs}
F\dot{\mathbf{x}}(t) = A \mathbf{x}(t).
\end{equation}
Eq. (\ref{eq:dynamics_no_inputs}) defines a \emph{linear descriptor system}~\cite{dion2003generic}.
The matrices  $F$ and $A$ can be further decomposed as $F = Q_{F} + T_{F}$ and $A = Q_{A} + T_{A}$.  The values of $Q_{F}$ and $Q_{A}$ are known, fixed parameters.  The matrices $T_{F}$ and $T_{A}$ are \emph{structure matrices}, in which any nonzero entry can take any arbitrary real value.  We  assume that the system defined by (\ref{eq:dynamics_no_inputs}) satisfies solvability, defined as follows.
\begin{definition}
\label{def:solvable}
An LTI system of the form (\ref{eq:dynamics_no_inputs}) is \emph{solvable} if for any initial state $\mathbf{x}(0) \in \mathbb{R}^{n}$, there exists a unique trajectory $\{\mathbf{x}(t) : t > 0\}$ that satisfies (\ref{eq:dynamics_no_inputs}).
\end{definition}

We now describe the effect of control inputs on the system (\ref{eq:dynamics_no_inputs}).  A subset $S$ of states act as control inputs, i.e., for each state $i \in S$, there exists a control input $u_{i}(t)$ such that $x_{i}(t) = u_{i}(t)$ for all $t$.  The states in $S$ correspond to the states of nodes that are controlled directly by an external entity.  In the following, without loss of generality, we assume that the state indices are ordered so that states $\mathbf{x}_{R} = (x_{1},x_{2}, \ldots, x_{n-|S|})$ do not act as control inputs, and states $\mathbf{x}_{S} = \{x_{n-|S|+1},\ldots,x_{n}\}$ act as control inputs.  The system dynamics are then given by
\begin{equation}
\label{eq:dynamics_input}
\left(
\begin{array}{c}
\hat{F} \\
\hline
0
\end{array}
\right)
\left(
\begin{array}{c}
\dot{\mathbf{x}}_{R}(t) \\
\dot{\mathbf{x}}_{S}(t)
\end{array}
\right) =
\left(
\begin{array}{c|c}
A_{RR} & A_{RS} \\
\hline
0_{(n-|S|) \times |S|} & -I_{|S| \times |S|}
\end{array}
\right)\left(
\begin{array}{c}
\mathbf{x}_{R}(t) \\
\mathbf{x}_{S}(t)
\end{array}
\right) + \left(
\begin{array}{c}
0_{(n-|S|) \times |S|} \\
I_{|S| \times |S|}
\end{array}
\right)
\mathbf{u}(t)
\end{equation}
In (\ref{eq:dynamics_input}), $\hat{F}$ is the $(n-|S|) \times n$ matrix consisting of the first $(n-|S|)$ rows of $F$.  The matrices $A_{RR}$ and $A_{RS}$ consist of the first $(n-|S|)$ and last $|S|$ columns of the first $(n-|S|)$ rows of $A$, respectively.  The vector $\mathbf{u}(t)$ is the control input signal.  The last $|S|$ rows of the equation enforce the condition that $x_{i}(t) = u_{i}(t)$ for all $i \in S$.

As a notation, for a square matrix $X \in \mathbb{R}^{n \times n}$, $N(X)$ is a graph with $n$ vertices, where there exists an edge $(i,j)$ if $X_{ji}$ is nonzero.

\subsection{Matroids and Submodularity}
\label{subsec:matroid}
In what follows, we define the concepts of matroids and submodular functions, which will be used in our optimization framework.  All definitions and lemmas in this subsection can be found in \cite{oxley1992matroid}. 
\begin{definition}
\label{def:matroid}
 A matroid $\mathcal{M} = (V,\mathcal{I})$ is defined by a finite set $V$ (denoted the ground set) and a collection $\mathcal{I}$ of subsets of $V$ such that (a) $\emptyset \in \mathcal{I}$, (b) if $X \subseteq Y$ and $Y \in \mathcal{I}$, then $X \in \mathcal{I}$, and (c) if $X, Y \in \mathcal{I}$ and $|X| < |Y|$, then there exists $v \in Y \setminus X$ such that $(X \cup \{v\}) \in \mathcal{I}$.  The collection $\mathcal{I}$ is denoted as the collection of independent sets of $\mathcal{M}$.
 \end{definition}

A maximal independent set is  a \emph{basis}; we let $\mathcal{B}(\mathcal{M})$ denote the set of bases of a matroid $\mathcal{M}$.
 The \emph{rank function} $\rho$ of a matroid is a function $\rho: 2^{V} \rightarrow \mathbb{Z}_{\geq 0}$, given by $\rho(X) = \max{\{|Y|: Y \subseteq X, Y \in \mathcal{I}\}}$.  The \emph{rank} of a matroid is equal to $\rho(V)$.    Matroids can be characterized by their rank functions, as demonstrated in the following lemma.

 \begin{lemma}
 \label{lemma:matroid_rank}
 Let $\rho: 2^{V} \rightarrow \mathbb{Z}_{\geq 0}$.  Suppose that (i) $\rho(\emptyset) = 0$, (ii) For any $X \subseteq V$ and $v \in V \setminus X$, $\rho(X) \leq \rho(X \cup \{v\}) \leq \rho(X) + 1$, and (iii) For any $X \subseteq V$ and $v,w \notin X$, if $\rho(X) = \rho(X \cup \{v\})$ and $\rho(X) = \rho(X \cup \{w\})$, then $\rho(X) = \rho(X \cup \{x,w\})$.  Then $\rho$ is the rank function of a matroid, which is denoted as the matroid induced by $\rho$.
 \end{lemma}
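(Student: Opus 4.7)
The plan is to define the candidate family of independent sets from the rank function in the standard way, then verify the three matroid axioms of Definition \ref{def:matroid}. Specifically, set
\begin{equation*}
\mathcal{I} = \{X \subseteq V : \rho(X) = |X|\},
\end{equation*}
and show that $\mathcal{M} = (V, \mathcal{I})$ is a matroid whose rank function is $\rho$.

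Axiom (a) is immediate from hypothesis (i): $\rho(\emptyset) = 0 = |\emptyset|$, so $\emptyset \in \mathcal{I}$. For axiom (b), I would take $X \subseteq Y$ with $Y \in \mathcal{I}$ and show $\rho(X) = |X|$. Using hypothesis (ii) inductively along any chain that adds one element at a time from $X$ up to $Y$, $\rho$ increases by at most $1$ at each step; since it must grow from $\rho(X)$ to $\rho(Y) = |Y|$ in exactly $|Y| - |X|$ steps, every step must increase $\rho$ by exactly $1$. In particular, $\rho(X) \geq |Y| - (|Y|-|X|) = |X|$, and the reverse inequality $\rho(X) \leq |X|$ follows again from (ii) applied along a chain from $\emptyset$ to $X$. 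Hence $\rho(X) = |X|$ and $X \in \mathcal{I}$.

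The main obstacle is axiom (c), the augmentation property, which is where hypothesis (iii) becomes essential. I would argue by contradiction: assume $X, Y \in \mathcal{I}$ with $|X| < |Y|$, yet $\rho(X \cup \{v\}) \neq |X|+1$ for every $v \in Y \setminus X$. By hypothesis (ii), this forces $\rho(X \cup \{v\}) = \rho(X) = |X|$ for every such $v$. The key step is then to promote this element-wise ``$v$ does not increase the rank'' to the set-wise statement $\rho(X \cup (Y \setminus X)) = \rho(X)$. I would do this by induction on $|Y \setminus X|$: if $Y \setminus X = \{v_1, \ldots, v_k\}$, then once I know $\rho(X \cup \{v_1, \ldots, v_{j-1}\}) = \rho(X)$ together with $\rho(X \cup \{v_j\}) = \rho(X)$, hypothesis (iii) (applied with $X$ replaced by $X \cup \{v_1, \ldots, v_{j-2}\}$, $v = v_{j-1}$, $w = v_j$, and combined with (ii) to re-establish the hypotheses of (iii) at each stage) yields $\rho(X \cup \{v_1, \ldots, v_j\}) = \rho(X)$. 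Monotonicity from (ii) then gives $|Y| = \rho(Y) \leq \rho(X \cup Y) = \rho(X) = |X|$, contradicting $|X| < |Y|$.

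Finally, I would verify that the rank function of the matroid $\mathcal{M}$ so constructed coincides with $\rho$. For $X \subseteq V$, let $Y \subseteq X$ be a maximal subset with $\rho(Y) = |Y|$; by (ii) and the axiom-(c) argument above applied to elements of $X \setminus Y$, adding any $v \in X \setminus Y$ to $Y$ leaves the rank unchanged, so iterating (iii) as in the previous paragraph gives $\rho(X) = \rho(Y) = |Y|$. Since $|Y|$ is by construction the matroid rank of $X$, we conclude $\rho(X) = \max\{|Z| : Z \subseteq X, Z \in \mathcal{I}\}$, as required. The delicate point throughout is the bootstrap in axiom (c), where (iii) must be applied repeatedly and its hypotheses re-verified at each stage via (ii); the rest is a careful but routine unpacking of the axioms.
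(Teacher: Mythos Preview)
The paper does not actually prove this lemma: it is stated as background and attributed to \cite{oxley1992matroid} via the sentence ``All definitions and lemmas in this subsection can be found in \cite{oxley1992matroid}.'' Your proposal supplies the standard textbook argument, and it is essentially correct.

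One small point of care in the axiom (c) induction: as you set it up, to apply hypothesis (iii) with base set $X_{j-2} := X \cup \{v_1,\ldots,v_{j-2}\}$ and elements $v_{j-1}, v_j$, you need both $\rho(X_{j-2} \cup \{v_{j-1}\}) = \rho(X_{j-2})$ and $\rho(X_{j-2} \cup \{v_j\}) = \rho(X_{j-2})$. The first follows from your inductive hypothesis, but the second does not come from (ii) alone; you only know $\rho(X \cup \{v_j\}) = \rho(X)$ at the original base $X$. The clean fix is to strengthen the inductive claim to ``for every subset $S \subseteq \{v_1,\ldots,v_{j}\}$, $\rho(X \cup S) = \rho(X)$,'' or equivalently to prove the general closure-type statement ``if $\rho(X \cup \{v\}) = \rho(X)$ for all $v \in Z$ then $\rho(X \cup Z) = \rho(X)$'' by induction on $|Z|$, using (iii) at the inductive step to show that adjoining one such $v$ preserves the elementwise hypothesis for the remaining elements. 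You have correctly flagged this as the delicate bootstrap; just be aware that (iii), not (ii), is what does the work in re-verifying the hypotheses at each stage.
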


 A simple example of a matroid is the uniform matroid $U_{k}$, defined by $X \in U_{k}$ if $|X| \leq k$ for some $k$.  In a linear matroid, the set $V$ is equal to a collection of vectors in $\mathbb{R}^{m}$, and a set of vectors is independent if the vectors are linearly independent.  In this case, the rank function is equal to the column rank of the matrix defined by the vectors.
 The following is a method of constructing matroids that will be used in this paper.
 \begin{definition}
 \label{def:matroid_union}
 Let $\mathcal{M}_{1} = (V_{1}, \mathcal{I}_{1})$ and $\mathcal{M}_{2} = (V_{2}, \mathcal{I}_{2})$ be matroids.  Then the \emph{matroid union} $\mathcal{M} = \mathcal{M}_{1} \vee \mathcal{M}_{2}$ is defined by $V = V_{1} \cup V_{2}$ and $X \in \mathcal{I}$ if $X = X_{1} \cup X_{2}$ with $X_{1} \in \mathcal{I}_{1}$, $X_{2} \in \mathcal{I}_{2}$.
 \end{definition}

It can be shown  that the matroid union $\mathcal{M}$ is a matroid, with rank function $\rho(X) = \min{\{\rho_{1}(Y) + \rho_{2}(Y) + |X \setminus Y| : Y \subseteq X\}}$.  A second matroid construction is the \emph{dual matroid}, described as follows.
\begin{definition}
\label{def:matroid_dual}
Let $\mathcal{M} = (V,\mathcal{I})$ be a matroid.  The dual of $\mathcal{M}$, denoted $\mathcal{M}^{\ast}$, has ground set $V$ and set of independent sets $\mathcal{I}^{\ast}$ given by $$\mathcal{I}^{\ast} = \{X^{\prime} \subseteq X : V-X \emph{ is a basis of } \mathcal{M}\}.$$
\end{definition}
If $\rho$ is the rank function of $\mathcal{M}$, then the rank function $\rho^{\ast}$ of $\mathcal{M}^{\ast}$ is given by $$\rho^{\ast}(X) = \rho(V - X) + |X| - \rho(V).$$
We now define the concept of a \emph{submodular function}.

\begin{definition}
\label{def:submod}
Let $V$ be a finite set.  A function $f: 2^{V} \rightarrow \mathbb{R}_{\geq 0}$ is \emph{submodular} if for any sets $A$ and $B$ with $A \subseteq B$ and any $v \in V \setminus B$, $$f(A \cup \{v\}) - f(A) \geq f(B \cup \{v\}) - f(B).$$
\end{definition}

The following lemma gives a method for constructing submodular functions.

\begin{lemma}
\label{lemma:submod_comp}
If $f: 2^{V} \rightarrow \mathbb{R}$ is submodular as a function of $S$, then $f(V \setminus S)$ is submodular as a function of $S$.
\end{lemma}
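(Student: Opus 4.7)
The plan is to reduce the claim to the definition of submodularity of $f$ via a change of variables taking complements. Define $g(S) = f(V \setminus S)$ and let $A \subseteq B \subseteq V$ with $v \in V \setminus B$. I want to show that $g(A \cup \{v\}) - g(A) \geq g(B \cup \{v\}) - g(B)$.

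First I would rewrite each term using complements: $g(A \cup \{v\}) = f((V \setminus A) \setminus \{v\})$, $g(A) = f(V \setminus A)$, and analogously for $B$. Setting $A' = V \setminus B$ and $B' = V \setminus A$, the subset relation $A \subseteq B$ flips to $A' \subseteq B'$, and $v \notin B$ translates to $v \in A' \subseteq B'$. After multiplying through by $-1$, the desired inequality becomes $f(A') - f(A' \setminus \{v\}) \geq f(B') - f(B' \setminus \{v\})$.

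Next, I would set $C = A' \setminus \{v\}$ and $D = B' \setminus \{v\}$, so that $C \subseteq D$ and $v \notin D$. The inequality then reads $f(C \cup \{v\}) - f(C) \geq f(D \cup \{v\}) - f(D)$, which is exactly the submodular inequality for $f$ applied to the pair $C \subseteq D$ with element $v$. This completes the proof.

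There is no real obstacle here beyond bookkeeping on the set complements; the result is purely a consequence of the symmetry of the submodular inequality under the involution $S \mapsto V \setminus S$. I expect the proof in the paper to be a short, one-line verification along the same lines.
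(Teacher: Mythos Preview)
Your argument is correct: the substitution $A' = V \setminus B$, $B' = V \setminus A$, followed by $C = A' \setminus \{v\}$, $D = B' \setminus \{v\}$, reduces the inequality for $g$ to the defining inequality for $f$, and the containment and membership conditions are tracked properly.

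The paper does not actually supply a proof of this lemma. It is stated in the preliminaries subsection with the blanket remark that ``All definitions and lemmas in this subsection can be found in \cite{oxley1992matroid},'' so there is nothing to compare against beyond noting that your direct verification is the standard one.
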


For a submodular function $f: 2^{V} \rightarrow \mathbb{R}$, with $|V| = n$, the \emph{multilinear relaxation} $F: \mathbb{R}^{n} \rightarrow \mathbb{R}_{+}$ is defined as $$F(x) = \sum_{S \subseteq V}{\left[f(S) \left(\prod_{i \in S}{x_{i}}\right)\left(\prod_{i \notin S}{(1-x_{i})}\right)\right]}.$$

\subsection{Definitions of Controllability}
\label{subsec:controll_def}
We now present the definition of controllability considered in this work, which can be found in more detail in \cite{reinschke1997digraph}. For general systems of the form
\begin{equation}
\label{eq:LDS}
F\dot{\mathbf{x}}(t) = A\mathbf{x}(t) + B\mathbf{u}(t)
\end{equation}
we first have the following lemma.
\begin{lemma}[\cite{yip1981solvability}]
\label{lemma:solvability}
If the system (\ref{eq:LDS}) is solvable, then we can write $\mathbf{x} = [\mathbf{x}_{1} \ \mathbf{x}_{2}]^{T}$ and have matrices $F_{1}$, $F_{2}$, $B_{1}$, and $B_{2}$ such that the dynamics (\ref{eq:LDS}) are equivalent to
\begin{eqnarray*}
 \dot{\mathbf{x}}_{1}(t) &=& F_{1}\mathbf{x}_{1}(t) + B_{1}\mathbf{u}(t) \\
 F_{2}\dot{\mathbf{x}}_{2}(t) &=& \mathbf{x}_{2}(t) + B_{2}\mathbf{u}(t)
 \end{eqnarray*}
 \end{lemma}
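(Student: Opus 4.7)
The plan is to reduce the lemma to the classical Weierstrass (Kronecker) canonical form for regular matrix pencils. The overall strategy has three stages: first, argue that solvability forces the pencil $(\lambda F - A)$ to be regular; second, apply the Weierstrass decomposition to split the pencil into a ``slow'' and a ``fast'' subsystem; third, transform coordinates and read off $F_1, F_2, B_1, B_2$ from the resulting block form.

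First, I would show that solvability as given in Definition \ref{def:solvable} (applied to the homogeneous dynamics, or equivalently that for sufficiently smooth $\mathbf{u}$ trajectories exist and are unique) implies regularity of the pencil $(F,A)$, i.e.\ that $\det(sF - A)$ is not the zero polynomial. If $\det(sF-A) \equiv 0$, then by the Kronecker form for singular pencils one can construct either a nonzero initial condition admitting no trajectory or two distinct trajectories from the same initial condition, contradicting solvability. This gives regularity of the pencil.

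Next, I would invoke the Weierstrass decomposition: for any regular pencil there exist nonsingular matrices $P, Q \in \mathbb{R}^{n \times n}$ such that
\begin{equation*}
PFQ = \begin{pmatrix} I & 0 \\ 0 & N \end{pmatrix}, \qquad PAQ = \begin{pmatrix} J & 0 \\ 0 & I \end{pmatrix},
\end{equation*}
where $N$ is nilpotent and $J$ is in Jordan form. Introducing the coordinate change $\mathbf{x} = Q \begin{pmatrix} \mathbf{x}_1 \\ \mathbf{x}_2 \end{pmatrix}$ and left-multiplying (\ref{eq:LDS}) by $P$, the block structure immediately yields
\begin{equation*}
\dot{\mathbf{x}}_1(t) = J \mathbf{x}_1(t) + B_1 \mathbf{u}(t), \qquad N \dot{\mathbf{x}}_2(t) = \mathbf{x}_2(t) + B_2 \mathbf{u}(t),
\end{equation*}
where $\begin{pmatrix} B_1 \\ B_2 \end{pmatrix} = PB$ in conformal blocks. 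Setting $F_1 := J$ and $F_2 := N$ gives precisely the form claimed in the lemma.

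The main obstacle is the first stage: rigorously connecting the analytic notion of solvability in Definition \ref{def:solvable} to the algebraic condition $\det(sF - A) \not\equiv 0$. This requires handling the case where $F$ is singular, where classical ODE existence/uniqueness theorems do not apply directly, and invoking the Kronecker canonical form for \emph{singular} pencils to exhibit the failure of solvability whenever the pencil is not regular. Once regularity is established, the Weierstrass decomposition and the coordinate change are standard linear-algebraic manipulations. Since the result is already available in \cite{yip1981solvability}, in the paper itself I would simply cite that reference and appeal to the decomposition as a black box.
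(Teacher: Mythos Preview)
Your proposal is correct and follows the standard Weierstrass canonical form argument for regular pencils. The paper itself does not prove this lemma at all; it is stated with a citation to \cite{yip1981solvability} and used as a black box, exactly as you anticipate in your final sentence.
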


 We now define the concepts of admissibility and reachability.
 \begin{definition}
 \label{def:admissible_reachable}
 An initial state $\mathbf{x}(0)$ is \emph{admissible} if $\mathbf{x}_{2}(0) = -\sum_{i=0}^{m-1}{F_{2}^{i}u^{(i)}(0)}$, where $m$ is the degree of nilpotency of $F_{2}$ and $u^{(i)}$ is the $i$-th derivative of the input $u$.  A state $\mathbf{x}^{\ast}$ is \emph{reachable} if there exists $t > 0$, an admissible initial state $\mathbf{x}_{0}$, and an input signal $\{\mathbf{u}(t^{\prime}) : t^{\prime} \in [0,t]\}$ such that $\mathbf{x}(t) = \mathbf{x}^{\ast}$ when $\mathbf{x}(0) = \mathbf{x}_{0}$.
 \end{definition}

 We let $R$ denote the set of reachable states.  We are now ready to define the concept of controllability.

 \begin{definition}
 \label{def:controllability}
 The system (\ref{eq:dynamics_input}) is \emph{controllable} if, for any admissible initial state $\mathbf{x}_{0}$ and any reachable final state $\mathbf{x}^{\ast}$ and time $t > 0$, there exists a control signal $\{\mathbf{u}(t^{\prime}) : t^{\prime} \in [0,t]\}$ such that $\mathbf{x}(0) = \mathbf{x}_{0}$ and $\mathbf{x}(t) = \mathbf{x}^{\ast}$.
 \end{definition}

 Finally, structural controllability is defined as follows.

 \begin{definition}
 \label{def:struct_controllability}
 The system (\ref{eq:LDS}) is \emph{structurally controllable} if there exist values for the free parameter matrices $T_{F}$, $T_{A}$, and $T_{B}$ such that (\ref{eq:LDS}) is controllable.  
 \end{definition}

 Structural controllability holds if the system (\ref{eq:dynamics_input}) is controllable for almost any choice of the free parameters.  Note that, if $Q_{A} = Q_{F} = 0$, then Definition \ref{def:struct_controllability} reduces to that of \cite{liu2011controllability}. A matrix pencil interpretation is given by the following theorem.

    \begin{theorem}[\cite{murota1987refined}]
   \label{theorem:matrix_pencil_controllability}
   The system (\ref{eq:LDS}) is structurally controllable if and only if there exist free parameter matrices $T_{F}$, $T_{A}$, and $T_{B}$ such that the following conditions hold:
   \begin{eqnarray}
   \label{eq:controll1}
   \mbox{$(F,A,B)$ is solvable} \\
   \label{eq:controll2}
   \mbox{rank}(A|B) = n \\
   \label{eq:controll3}
   \mbox{rank}((A-zF)|B) = n \quad \mbox{for all $z \in \mathbb{C}$}
   \end{eqnarray}
   \end{theorem}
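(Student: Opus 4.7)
The plan is to reduce the theorem to the purely numerical Popov-Belevitch-Hautus (PBH) characterization of controllability for descriptor systems. By Definition~\ref{def:struct_controllability}, structural controllability is equivalent to the existence of \emph{some} assignment of the free parameter matrices $T_F, T_A, T_B$ for which the fixed-parameter triple $(F,A,B)$ is controllable in the sense of Definition~\ref{def:controllability}. So the whole claim reduces to showing that, for any numerical triple $(F,A,B)$, Definition~\ref{def:controllability} is equivalent to the conjunction of (\ref{eq:controll1})--(\ref{eq:controll3}); quantifying the equivalence over parameter assignments then finishes the proof.

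For that numerical equivalence I would apply Lemma~\ref{lemma:solvability}: under solvability (condition~(\ref{eq:controll1})), there exist constant nonsingular $P,Q$ such that $PFQ$, $PAQ$, and $PB$ realize the decomposed form $\dot x_1 = F_1 x_1 + B_1 u$ and $F_2 \dot x_2 = x_2 + B_2 u$ with $F_2$ nilpotent. Because $P$ and $Q$ do not depend on $z$, for every $z \in \mathbb{C}$ the rank of $[A - zF \mid B]$ equals the rank of the block-diagonal concatenation of $[zI - F_1 \mid B_1]$ and $[I - zF_2 \mid B_2]$. Nilpotency of $F_2$ makes $I - zF_2$ invertible for every $z \in \mathbb{C}$, so the fast block contributes full row rank $n_2$ automatically; hence condition~(\ref{eq:controll3}) is equivalent to the slow-subsystem PBH condition rank$[zI - F_1 \mid B_1] = n_1$ for all $z$, and condition~(\ref{eq:controll2}) is its evaluation at $z = 0$. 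The classical PBH test then gives controllability of the slow part, while reachability of the nilpotent fast part from an admissible initial state holds automatically because Definition~\ref{def:admissible_reachable} absorbs precisely the impulsive component required to match any target fast-state. Combining slow controllability with fast reachability recovers Definition~\ref{def:controllability}, and the converse runs the same decomposition in reverse: starting from (\ref{eq:controll1})--(\ref{eq:controll3}), the slow-subsystem PBH condition is extracted and the fast subsystem is handled through admissibility and nilpotency.

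The main obstacle I expect is the bookkeeping around the block decomposition: one must verify that the rank of $[A - zF \mid B]$ factors cleanly through $P,Q$ into the two subsystem pencils, and that Definition~\ref{def:admissible_reachable} does exactly the work needed to bypass a separately stated impulse-controllability condition of the form rank$[F \mid B] = n$. This is the delicate point that lets Murota's formulation get away with only the three conditions above. Once this is pinned down, the rank conditions align with slow-subsystem PBH, and quantifying existentially over the free parameters yields structural controllability in the sense of Definition~\ref{def:struct_controllability}.
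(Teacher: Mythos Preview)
The paper does not prove this theorem; it is stated in the preliminaries with a citation to \cite{murota1987refined} and no accompanying argument, so there is nothing in the paper to compare your proposal against.

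For what it is worth, your outlined route via the Weierstrass decomposition of Lemma~\ref{lemma:solvability} and the classical PBH test is the standard way these rank conditions are established for descriptor systems, and the steps you sketch are sound. You are right that condition~(\ref{eq:controll2}) is the $z=0$ instance of~(\ref{eq:controll3}) and hence formally redundant at the numerical level; in Murota's framework the two are separated because each leads to a distinct combinatorial test (matroid-union rank for~(\ref{eq:controll2}), auxiliary-graph reachability for~(\ref{eq:controll3})), which is exactly how the present paper uses them downstream. Your identification of the delicate point---that Definition~\ref{def:admissible_reachable} restricts attention to admissible initial states and reachable targets, so the nilpotent fast subsystem imposes no additional rank condition of the form $\mbox{rank}[F\mid B]=n$---is precisely what allows the three stated conditions to suffice without a separate impulse-controllability clause.
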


We have assumed that (\ref{eq:controll1}) holds, since the system is solvable.  It remains to find equivalent or sufficient conditions for (\ref{eq:controll2}) and (\ref{eq:controll3}).  The following lemma gives a matroid-based sufficient condition for (\ref{eq:controll2}).

 \begin{lemma}[\cite{murota1987refined}]
 \label{lemma:condition_s_zero}
 Let $\mathcal{M}(I|Q_{A}|Q_{B})$ denote the linear matroid defined by the fixed parameter matrices $Q_{A}$ and $Q_{B}$, and let $\mathcal{M}(I|T_{A}|T_{B})$ denote the matroid defined by the free parameter matrices $T_{A}$ and $T_{B}$.  Then $\mbox{rank}(A|B) = n$ iff $\mbox{rank}(\mathcal{M}(I|Q_{A}|Q_{B}) \vee \mathcal{M}(I|T_{A}|T_{B})) = 2n$.
\end{lemma}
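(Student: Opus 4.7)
The plan is to prove both directions of the equivalence by establishing a direct correspondence between (a) configurations $(J, R, \sigma)$ certifying $\mbox{rank}(A|B) = n$ and (b) pairs of disjoint bases of $\mathcal{M}_{1} := \mathcal{M}(I|Q_{A}|Q_{B})$ and $\mathcal{M}_{2} := \mathcal{M}(I|T_{A}|T_{B})$. First I observe (in the setting of Theorem~\ref{theorem:matrix_pencil_controllability}) that $\mbox{rank}(A|B) = n$ holds for some admissible choice of $T := (T_{A}|T_{B})$ iff there exists $J \subseteq \{n+1, \ldots, 2n+m\}$ with $|J| = n$ such that $\det\bigl((A|B)[:, J]\bigr)$ is not identically zero as a polynomial in the free entries of $T$. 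Expanding the determinant multilinearly via $(A|B)_{ij} = Q_{ij} + T_{ij}$ yields
\[
\det\bigl((A|B)[:, J]\bigr) \;=\; \sum_{R,\,\sigma} \pm\Bigl(\prod_{i \in R} T_{i,\sigma(i)}\Bigr) \det\bigl(Q\bigl[[n]\setminus R,\, J \setminus \sigma(R)\bigr]\bigr),
\]
where $R \subseteq [n]$ and $\sigma : R \to J$ is an injection. Since the free entries of $T$ are algebraically independent, distinct pairs $(R, \sigma)$ with each $T_{i,\sigma(i)}$ free produce distinct monomials that cannot cancel, so the polynomial is nonzero iff some $(R,\sigma)$ satisfies both (i) each $T_{i,\sigma(i)}$ is a free parameter of $T$ and (ii) the fixed-parameter submatrix $Q\bigl[[n]\setminus R,\, J \setminus \sigma(R)\bigr]$ is invertible.

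Next I associate to each such triple $(J,R,\sigma)$ the two sets $X_{1} := R \cup (J \setminus \sigma(R))$ and $X_{2} := ([n] \setminus R) \cup \sigma(R)$, and argue these are disjoint bases of $\mathcal{M}_{1}$ and $\mathcal{M}_{2}$. The columns of $(I|Q_{A}|Q_{B})$ indexed by $X_{1}$ are $\{e_{i} : i \in R\} \cup \{q_{j} : j \in J \setminus \sigma(R)\}$; these $n$ vectors span $\mathbb{R}^{n}$ iff $Q\bigl[[n] \setminus R,\, J \setminus \sigma(R)\bigr]$ is invertible, which is condition (ii), so $X_{1}$ is a basis of $\mathcal{M}_{1}$. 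Analogously, condition (i) implies that $\sigma$ exhibits a perfect matching in the support of $T[R, \sigma(R)]$, giving this submatrix full structural rank and hence making $X_{2}$ a basis of $\mathcal{M}_{2}$. Disjointness of $X_{1}$ and $X_{2}$ is immediate by inspection, so $|X_{1} \cup X_{2}| = 2n$ and the matroid union has rank at least $2n$; it is exactly $2n$ since both constituent matroids have rank $n$.

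For the converse, given disjoint bases $X_{i} = I_{i} \cup J_{i}$ of $\mathcal{M}_{i}$ (with $|I_{i}| + |J_{i}| = n$), I reverse the construction: set $R := I_{1}$, take $\sigma : I_{1} \to J_{2}$ as the restriction of any perfect matching in the support of the structurally-invertible submatrix $T\bigl[[n] \setminus I_{2},\, J_{2}\bigr]$ (well-defined since $I_{1} \subseteq [n] \setminus I_{2}$ by disjointness of $I_{1}$ and $I_{2}$), and let $J := J_{1} \cup \sigma(I_{1})$, which is a disjoint union of size $n$. Conditions (i) and (ii) then follow from the basis properties of $X_{2}$ and $X_{1}$, respectively, closing the chain of equivalences.

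The principal technical point is the translation between the algebraic condition---nonvanishing of the determinant polynomial in the free entries of $T$---and the combinatorial condition that the support of $T[R,\sigma(R)]$ admits a perfect matching. This equivalence is the content of the Frobenius--Konig theorem and is what gives the matroid $\mathcal{M}(I|T_{A}|T_{B})$ its generic (structural) interpretation. A secondary subtlety is the no-cancellation property of the multilinear expansion, which hinges on algebraic independence of the free entries of $T$ and is what justifies reducing the polynomial-non-vanishing question to the purely combinatorial matroid union condition.
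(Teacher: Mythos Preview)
The paper does not prove this lemma; it is quoted from Murota (1987) and used as a black box, so there is no in-paper argument to compare against.

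Your argument is correct and is in fact the standard route to this mixed-matrix rank identity. The key steps---expand $\det\bigl((A|B)[:,J]\bigr)$ multilinearly over $Q_{ij}+T_{ij}$, use algebraic independence of the free $T$-entries to prevent cancellation among distinct $(R,\sigma)$ terms, and then read off from any surviving term a transversal in $T$ (yielding a basis of $\mathcal{M}(I|T_A|T_B)$) together with an invertible $Q$-minor (yielding a basis of $\mathcal{M}(I|Q_A|Q_B)$) that are disjoint---are all sound. Your converse is also fine: disjointness of $I_1,I_2$ lets you restrict the perfect matching on $[n]\setminus I_2$ to $I_1$, and disjointness of $J_1,J_2$ gives $|J_1\cup\sigma(I_1)|=n$. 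One point you might make more explicit is that $\mathcal{M}(I|T_A|T_B)$ is being taken as the \emph{generic} linear matroid of $(I|T_A|T_B)$, so that ``$X_2$ is a basis'' really is equivalent to the existence of a perfect matching in the support of $T[[n]\setminus I_2,\,J_2]$; you correctly cite Frobenius--K\"onig for this, but the identification of the matroid with a transversal matroid is what makes that invocation legitimate.
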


In the following section, we give a graph construction, introduced in \cite{murota1987refined}, that will be used to derive sufficient conditions for (\ref{eq:controll3}).

\subsection{Auxiliary Graph Construction}
\label{subsec:aux_graph}
We consider the following auxiliary graph constructed from (\ref{eq:LDS}).  Define a matrix
\begin{equation}
\label{eq:Omega_matrix}
\Omega =
\begin{array}{c}
\mathbf{w}: \\
\mathbf{x}: \\
\mathbf{u}:
\end{array}
\left(
\begin{array}{cc}
Q_{A}-Q_{F} & Q_{B} \\
I & 0 \\
0 & I
\end{array}
\right)
\end{equation}
Here, $\mathbf{w}$, $\mathbf{x}$, and $\mathbf{u}$ denote indices, so that the first $n$ rows are indexed $w_{1},\ldots,w_{n}$, the second $n$ rows are indexed $x_{1},\ldots,x_{n}$, and the third $k$ rows are indexed $u_{1},\ldots,u_{k}$.

As an intermediate step in the construction, define a bipartite graph $H$ with vertex set $$V_{H} = \{w_{1}^{T},\ldots,w_{n}^{T}\} \cup \{x_{1}^{Q}, \ldots, x_{n}^{Q}\} \cup \{w_{1}^{Q},\ldots,w_{n}^{Q}\}$$ and edge set $$E_{H} = \{(w_{i}^{T}, w_{i}^{Q}) : i=1,\ldots,n\} \cup \{(w_{i}^{T}, x_{j}^{Q}) : (i,j) \in N(T_{A}) \cup N(T_{F})\}.$$  The following lemma gives properties of matchings in this bipartite graph.

\begin{lemma}[\cite{murota1987refined}]
\label{lemma:bipartite_property}
If the system (\ref{eq:LDS}) is solvable, then there exists a perfect matching $m$ on the graph $H$ (i.e., a matching in which all nodes in $\{w_{1}^{T},\ldots,w_{n}^{T}\}$ are matched) such that the rows indexed in the set $$J = \{w_{i} : m(w_{i}^{T}) = w_{i}^{Q}\} \cup \{x_{i} : m(w_{j}^{T}) = x_{i}^{Q} \mbox{ for some $w_{j}$}\}$$ are linearly independent in $\Omega$.
\end{lemma}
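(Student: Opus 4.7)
The plan is to translate the algebraic solvability hypothesis, which is equivalent to $\det(A - zF) \not\equiv 0$ as a polynomial in $z$, into a combinatorial matching statement on $H$. First I would establish the dictionary between perfect matchings of $\{w_1^T,\ldots,w_n^T\}$ in $H$ and ways of choosing $n$ rows of $\Omega$: the edge $(w_i^T, w_i^Q)$ encodes ``select row $w_i$'' (a row of the block $(Q_A - Q_F \mid Q_B)$), while the edge $(w_i^T, x_j^Q)$, present precisely when $(T_A)_{ji}$ or $(T_F)_{ji}$ is a free parameter, encodes ``select row $x_j$'' (a standard basis row). A perfect matching on the left side thus selects exactly $n$ rows of $\Omega$, namely those indexed by the set $J$ in the statement.

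Next I would reduce linear independence of these rows to a simple submatrix condition. Writing $I_x = \{j : x_j \in J\}$ and $I_w = \{i : w_i \in J\}$, the $x$-rows are standard basis vectors with support only in the first $n$ columns and are therefore independent among themselves; after using them to eliminate the columns indexed by $I_x$, the independence of $J$ reduces to full row rank of the $|I_w| \times (|I_w| + k)$ submatrix of $(Q_A - Q_F \mid Q_B)$ obtained by restricting to rows $I_w$ and deleting columns $I_x$. So it suffices to exhibit one perfect matching $m$ for which this reduced submatrix is nonsingular.

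To produce such a matching, I would invoke solvability via the determinantal expansion. Writing $\det(A - zF)$ as a signed sum over permutations $\pi$ of $\prod_i (A - zF)_{\pi(i),i}$, each factor splits into a fixed part from $Q_A - zQ_F$ and a free part from $T_A - zT_F$. A surviving nonzero term of this polynomial---guaranteed by solvability---can be read off to assign each column $i$ either to the ``$Q$-path'' (contributing row $w_i$ to $J$ via the edge $(w_i^T, w_i^Q)$) or to the ``$T$-path'' (contributing row $x_{\pi(i)}$ to $J$ via the edge $(w_i^T, x_{\pi(i)}^Q)$, which is present in $H$ by hypothesis since the corresponding entry of $T_A - zT_F$ is free). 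Because $\pi$ is a bijection, no two columns ever claim the same $x_j^Q$, so this assignment is a valid perfect matching in $H$, and the nonvanishing of the selected monomial is precisely the nonsingularity of the reduced submatrix from the previous step.

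The main obstacle will be the bookkeeping between columns of $A - zF$ that contribute via ``fixed'' versus ``free'' entries and the corresponding matching edges in $H$, together with the fact that generically different terms of the determinantal expansion may cancel. My approach is to set up the column-to-edge correspondence once, carefully, at the outset, and then use a generic-parameter argument: solvability implies $\det(A-zF) \not\equiv 0$ for almost all choices of the free parameters in $T_A$ and $T_F$, so at least one monomial in the expansion survives and supplies simultaneously both the desired perfect matching and the nonsingular submatrix certifying linear independence of the rows in $J$.
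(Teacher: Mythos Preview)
This lemma is quoted from \cite{murota1987refined} and the paper does not supply its own proof, so there is no in-paper argument to compare your sketch against.

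That said, your step 3 has a genuine gap: a row/column index mismatch between the determinantal certificate you extract and the independence condition you actually need. In the graph $H$, the left vertex $w_i^T$ indexes \emph{column} $i$ of the free block (the edge $(w_i^T,x_j^Q)$ is present exactly when the $(j,i)$ entry of $T_A$ or $T_F$ is free), whereas the row $w_i$ of $\Omega$ is \emph{row} $i$ of $(Q_A-Q_F\mid Q_B)$. When column $i$ takes the $Q$-path, the determinantal factor is $(Q_A-zQ_F)_{\pi(i),i}$, which sits in row $\pi(i)$; summing over all permutations compatible with your $T$-choices, the coefficient of the free-parameter monomial is, up to sign, the minor of $Q_A-zQ_F$ with \emph{rows} $[n]\setminus\pi(I_T)$ and \emph{columns} $I_Q$. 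But independence of your set $J$ in $\Omega$ reduces to full row rank of the submatrix of $(Q_A-Q_F\mid Q_B)$ with \emph{rows} $I_Q$ and columns $[n]\setminus\pi(I_T)$ (together with the $B$-columns). The two index sets are swapped, and the nonvanishing of one minor says nothing about the other.

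A two-by-two example makes this concrete. Take $F=I$ (all fixed), $Q_A=\left(\begin{smallmatrix}0&1\\0&0\end{smallmatrix}\right)$, $T_A=\left(\begin{smallmatrix}0&0\\t&0\end{smallmatrix}\right)$, and $B=0$. Then $\det(A-zF)=z^2-t$, and the term linear in $t$ comes from $\pi=(1\ 2)$ with column $1$ on the $T$-path and column $2$ on the $Q$-path; your recipe yields $J=\{x_2,w_2\}$, but in $\Omega$ the rows $x_2=(0,1)$ and $w_2=(0,-1)$ are dependent. The monomial survives because $(Q_A-zQ_F)_{1,2}=1$ is nonzero, not because $(Q_A-Q_F)_{2,1}$ is. (Here the \emph{other} term, $z^2$, does produce the valid matching $J=\{w_1,w_2\}$, but your argument gives no mechanism for selecting it over the bad one.) A related secondary issue is that your certificate concerns $Q_A-zQ_F$ as a polynomial in $z$, while $\Omega$ is built from the specialization $Q_A-Q_F$; even after fixing the indexing, you would still need to argue that the relevant $Q$-minor does not vanish at $z=1$. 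Murota's actual argument handles both points through his layered-mixed-matrix machinery rather than a direct reading of a single term of $\det(A-zF)$.
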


Let $J$ be a set satisfying the conditions of Lemma \ref{lemma:bipartite_property} with matching $m$, and let $\Omega_{J}$ be a submatrix of $\Omega$ obtained from the rows in $J$. We have that $\Omega_{J}$ is an $n \times (n + k)$ matrix with full rank, and hence we can find rows $J_{1}$ such that $\Omega_{J \cup J_{1}}$ is a full-rank $(n+k) \times (n+k)$ matrix.  Let $\tilde{\Omega} = \Omega \Omega_{J \cup J_{1}}^{-1}$.  We index the columns of $\tilde{\Omega}$ in the set $J \cup J_{1}$.

We now define the auxiliary graph $\hat{G} = (\hat{V}, \hat{E})$ using the matrix $\tilde{\Omega}$.  The vertex set is equal to
\begin{multline*}
\hat{V} = \{w_{i}^{T} : i=1,\ldots,n\} \cup \{w_{i}^{Q}: i=1,\ldots,n\} \cup \{x_{i}^{T} : i=1,\ldots,n\} \cup \{x_{i}^{Q} : i=1,\ldots,n\}\\
 \cup \{u_{i}^{T} : i=1,\ldots,k\} \cup \{u_{i}^{Q} : i=1,\ldots,k\},
\end{multline*}
 while the edge set is given by
\begin{eqnarray*}
\hat{E} &=& \{(w_{i}^{T}, x_{j}^{Q}) : (i,j) \in N(T_{A}) \cup N(T_{F}), m(w_{i}^{T}) \neq x_{j}^{Q}\} \\
&& \cup \{(x_{j}^{Q}, w_{i}^{T}) : (i,j) \in N(T_{A}) \cup N(T_{F}), m(w_{i}^{T}) = x_{j}^{Q}\} \\
&& \cup \{(w_{i}^{T}, w_{i}^{Q}) : m(w_{i}^{T}) \neq w_{i}^{Q}\} \cup \{(w_{i}^{Q}, w_{i}^{T}) : m(w_{i}^{T}) = m(w_{i}^{Q})\} \\
&& \cup \{(w_{i}^{T}, u_{j}^{Q}) : (i,j) \in N(T_{B})\} \cup \{(u_{j}^{T}, u_{j}^{Q}) : j=1,\ldots,n\} \\
&& \cup \{(x^{Q},y^{Q}) : x \in \hat{V} \setminus J, y \in J, \tilde{\Omega}_{xy} \neq 0, \tilde{\Omega}_{xz} = 0 \forall z \in J_{1}\}
\end{eqnarray*}

Based on this graph construction, the following sufficient condition for $\mbox{rank}(zF-A|B) = n$ can be derived.

\begin{lemma}
\label{lemma:connectivity_sufficient_general}
Let $$S_{-} = \{v^{Q} : \tilde{\Omega}_{vj} \neq 0 \mbox{ for some } j \in J_{1}\}.$$  Let $V^{\prime\prime}$ denote the set of nodes in $\hat{V}$ that are part of a cycle. If all nodes in $V^{\prime\prime}$ are connected to $S_{-}$ in the graph $\hat{G}$, then the condition $\mbox{rank}(zF-A|B) = n$ holds for almost any values of the free parameters.
\end{lemma}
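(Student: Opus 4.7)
The plan is to establish the rank condition by a genericity argument: under the connectivity hypothesis, a particular full-size minor of the pencil $(zF-A\mid B)$ is shown to be a nonzero polynomial in $z$ and in the free parameters, so the set of free-parameter values on which the pencil fails to have full rank at some $z\in\mathbb{C}$ is a proper algebraic subset and hence of Lebesgue measure zero. Concretely, I would work with the nonsingular submatrix of $\tilde{\Omega}$ (treated as a matrix over the field of rational functions in $z$ and in the free parameters of $T_A$, $T_F$, $T_B$) and exhibit a monomial that must survive in its determinant.

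First, I would recast $\mbox{rank}(zF-A\mid B)=n$ for all $z$ as the statement that $(zF-A\mid B)$ has full row rank over $\mathbb{C}(z)$, which is in turn a matroid-union condition of exactly the form in Lemma~\ref{lemma:condition_s_zero}, now applied to the extended pencil with $z$ playing the role of an additional generic parameter. This converts a pointwise statement into a single generic-rank statement that can be analyzed through the fixed structure already set up in $\Omega$ and $\tilde{\Omega}$.

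Next, I would expand the relevant determinant combinatorially on $\hat{G}$. The nonzero entries of $\tilde{\Omega}$ correspond to edges of $\hat{G}$, with orientation encoding whether an entry comes from the original $\Omega$ or from the normalization $\Omega_{J\cup J_1}^{-1}$; by construction the $J_1$-columns are exactly the ones whose support defines $S_-$. A Leibniz-type expansion then decomposes the determinant into signed products over collections of vertex-disjoint paths terminating in $S_-$, together with disjoint cycles covering any remaining vertices drawn from $V''$. The hypothesis that every node of $V''$ is reachable to $S_-$ in $\hat{G}$ allows the selection of a decomposition containing no closed cycles: each potentially-cycling vertex is instead routed along a directed path into $S_-$. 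The corresponding product is a monomial in distinct free parameters of $T_A$, $T_F$, $T_B$ (and in powers of $z$), and therefore cannot cancel with any other term in the expansion.

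The principal obstacle will be controlling the algebraic relations induced by the inverse $\Omega_{J\cup J_1}^{-1}$, whose entries are rational functions in the fixed parameters $Q_A$, $Q_F$, $Q_B$ and can in principle force unintended cancellations among entries of $\tilde{\Omega}$. I would avoid expanding $\det\tilde{\Omega}$ directly by following the matroid-theoretic strategy of \cite{murota1987refined}: argue that $\hat{G}$ faithfully represents the combinatorial type of the matroid union between the rows of $\tilde{\Omega}$ and the columns indexed by $J_1$, so that a matroid basis witnessing the required rank corresponds precisely to a subgraph of $\hat{G}$ in which every cycle through $V''$ is ``broken'' by a directed path to $S_-$. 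The connectivity hypothesis supplies such a subgraph, which yields the sufficiency claim.
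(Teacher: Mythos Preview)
Your first step contains a genuine gap: recasting ``$\mbox{rank}(zF-A\mid B)=n$ for all $z\in\mathbb{C}$'' as ``full row rank over $\mathbb{C}(z)$'' is not valid. Full row rank over the function field $\mathbb{C}(z)$ only guarantees rank $n$ for all but finitely many $z$; it says nothing about the finitely many eigenvalue-type points where every minor involving $z$ can vanish. Condition (\ref{eq:controll3}) is a PBH test and must hold at \emph{every} $z$, so treating $z$ as ``an additional generic parameter'' on the same footing as the free entries of $T_A,T_F,T_B$ discards exactly the content of the lemma. Consequently, exhibiting a single surviving monomial in a Leibniz expansion over $\hat{G}$ certifies only generic-$z$ full rank; it does not prevent the $n\times n$ minors, viewed as polynomials in $z$, from sharing a common root for a positive-measure set of free-parameter values.

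The paper sidesteps this entirely by invoking a result of Murota (Theorem 4.7 of \cite{murota1987refined}), which already encodes the ``for all $z$'' requirement combinatorially: with edge weights $\gamma(e)$ on $\hat{G}$, the rank condition holds for almost all free parameters iff every directed cycle in the subgraph $\tilde{G}$ induced by the vertices \emph{not} reachable to $S_{-}$ has $\gamma$-sum zero. Your hypothesis says every vertex lying on a cycle \emph{is} reachable to $S_{-}$, so $\tilde{G}$ is acyclic and Murota's criterion is vacuously satisfied; that is the whole proof. Your path-routing picture is the right combinatorics, but to make it a proof you would either have to reprove Murota's cycle-balance theorem (where the $\gamma$ weights track $z$-degree and handle the all-$z$ subtlety) or simply invoke it as the paper does.
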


A proof is given in the appendix.

\subsection{Performance Metrics}
\label{subsec:perf_metrics}
The optimality guarantees for the input selection algorithms presented in this work are applicable to monotone submodular performance metrics.    The first metric is the \emph{network coherence}, defined as follows.

\begin{definition}
\label{def:coherence}
Consider the node dynamics $\dot{x}_{i}(t) = -\sum_{j \in N(i)}{(x_{i}(t)-x_{j}(t))} + w_{i}(t)$, where $w_{i}(t)$ is a zero-mean white process with autocorrelation function $W(\tau) = \delta(\tau)$.  The network coherence $f(S)$ from input set $S$ is the mean-square deviation in the node state from consensus in steady-state.
\end{definition}

The network coherence was defined in \cite{patterson2010leader} and shown to be a supermodular function of the input set in \cite{clark2014leader}.  The second metric is the \emph{convergence error}.

\begin{definition}
\label{def:convergence}
Consider the node dynamics $\dot{x}_{i}(t) = -\sum_{j \in N(i)}{W_{ij}(x_{i}(t)-x_{j}(t))}$, where the edge weights $W_{ij}$ are nonnegative.  The convergence error at time $t$ is defined as $||\mathbf{x}(t) - x^{\ast}\mathbf{1}||_{p}$, for $p \in [1,\infty)$, where $x^{\ast}$ is the state of the input nodes.
\end{definition}

The convergence error was proven to be a supermodular function of the input set in \cite{clark2014convergence}.  Other examples of submodular functions include the information gathered by a set of input nodes \cite{krause2008near} and the trace of the controllability Gramian \cite{summers2014submodularity}.

\section{Problem Formulation - Input Selection for Performance and Controllability}
\label{sec:general}
In this section, we present our submodular optimization framework for selecting input nodes based on performance and controllability. In order to provide computational tractability, we first map the sufficient conditions of Lemma \ref{lemma:condition_s_zero} and Lemma \ref{lemma:connectivity_sufficient_general}  to matroid constraints on the input set.  We present algorithms for selecting a minimum-size input set to guarantee controllability.  We then formulate the problem of selecting a set of up to $k$ input nodes to maximize a performance metric while satisfying controllability.  We prove that the problem is a submodular maximization problem with two matroid basis constraints, and present efficient approximation algorithms.  For the case where the number of input nodes may not be sufficient to guarantee controllability, we introduce a graph controllability index and formulate the problem of selecting input nodes based on a trade-off between performance and controllability.

\subsection{Mapping controllability to matroid constraints}
\label{subsec:controllability_matroid_constraint}
We  derive matroid constraints for the set of non-input nodes that are equivalent to or sufficient for the conditions of Theorem \ref{theorem:matrix_pencil_controllability}.  As a first step, we develop an equivalent representation of the dynamics (\ref{eq:dynamics_input}), and prove that structural controllability of (\ref{eq:dynamics_input}) is equivalent to structural controllability of the equivalent dynamics.

\begin{lemma}
 \label{lemma:equivalent_rep}
 Define the dynamics
  \begin{equation}
 \label{eq:equiv_dynamics}
 F\dot{\mathbf{x}} = \left(
 \begin{array}{c|c}
 A_{RR} & A_{RS} \\
 \hline
 A_{SR}  & A_{SS} 
 \end{array}
 \right)\left(
 \begin{array}{c}
 \mathbf{x}_{R}(t) \\
 \mathbf{x}_{S}(t)
 \end{array}
 \right) + \left(
 \begin{array}{c}
 0 \\
 T_{B}
 \end{array}
 \right)\mathbf{u}(t)
 \end{equation}
 where $T_{B}$ is a diagonal matrix where the diagonal entries are free parameters.  Then the system (\ref{eq:dynamics_input}) is structurally controllable if and only if (\ref{eq:equiv_dynamics}) is structurally controllable.
 \end{lemma}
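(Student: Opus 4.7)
The plan is to invoke Theorem \ref{theorem:matrix_pencil_controllability} for both systems and verify that the three matrix-pencil conditions (\ref{eq:controll1})--(\ref{eq:controll3}) are simultaneously achievable by a single choice of free parameters for system (\ref{eq:dynamics_input}) if and only if they are for system (\ref{eq:equiv_dynamics}). The heart of the argument is a block-rank computation showing that the rank conditions reduce to the \emph{same} condition on the top block, with solvability handled by a generic (Zariski-open) argument.

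First, I would write out the matrices $(A_i,F_i,B_i)$ for each system and analyze condition (\ref{eq:controll3}) (which subsumes (\ref{eq:controll2}) by taking $z=0$). For (\ref{eq:dynamics_input}),
\[
(A_1-zF_1 \mid B_1) \;=\; \begin{pmatrix} A_{RR}-zF_{RR} & A_{RS}-zF_{RS} & 0 \\ 0 & -I & I \end{pmatrix},
\]
and the bottom $|S|$ rows span an $|S|$-dimensional subspace whose projection onto the first $n$ columns is zero. A direct block-rank calculation therefore gives $\mathrm{rank}(A_1-zF_1 \mid B_1) = |S| + \mathrm{rank}[A_{RR}-zF_{RR} \mid A_{RS}-zF_{RS}]$. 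For (\ref{eq:equiv_dynamics}),
\[
(A_2-zF_2 \mid B_2) \;=\; \begin{pmatrix} A_{RR}-zF_{RR} & A_{RS}-zF_{RS} & 0 \\ A_{SR}-zF_{SR} & A_{SS}-zF_{SS} & T_B \end{pmatrix};
\]
since $T_B$ is diagonal with free parameters, it is nonsingular for generic choices, and using its columns to eliminate the lower-left block yields the same formula $\mathrm{rank}(A_2-zF_2 \mid B_2) = |S| + \mathrm{rank}[A_{RR}-zF_{RR} \mid A_{RS}-zF_{RS}]$. Thus (\ref{eq:controll2}) and (\ref{eq:controll3}) translate into the identical condition $\mathrm{rank}[A_{RR}-zF_{RR} \mid A_{RS}-zF_{RS}] = n-|S|$ for all $z\in\mathbb{C}$ in both systems, so these conditions transfer directly between the two formulations.

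Next I would address solvability (\ref{eq:controll1}). For (\ref{eq:equiv_dynamics}) the pencil is $(F,A)$, which is exactly the pencil of the input-free system (\ref{eq:dynamics_no_inputs}); hence solvability of (\ref{eq:equiv_dynamics}) is inherited from the standing assumption on (\ref{eq:dynamics_no_inputs}). For (\ref{eq:dynamics_input}) the block-triangular form gives $\det(sF_1-A_1) = \det(sF_{RR}-A_{RR})$, so solvability reduces to $\det(sF_{RR}-A_{RR}) \not\equiv 0$, a polynomial condition on the free parameters of the $RR$ block. To combine these with the rank conditions for each direction of the equivalence, I would use the fact that each of solvability and the rank conditions defines a nonempty Zariski-open subset of the free-parameter space (nonemptiness coming from the structural controllability hypothesis in each direction, together with the solvability of (\ref{eq:dynamics_no_inputs})). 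Finite intersections of nonempty Zariski-open sets in the irreducible parameter space remain nonempty, so a single choice of $T_F,T_A$ (and any nonsingular $T_B$) can be taken to simultaneously satisfy all three conditions for whichever system needs to be exhibited as structurally controllable.

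The routine part is the block-rank bookkeeping in Step 1, which is entirely mechanical. The main obstacle is the asymmetry in the solvability conditions ($\det(sF_{RR}-A_{RR}) \not\equiv 0$ versus $\det(sF-A) \not\equiv 0$): these are two different polynomial constraints on the free parameters, so the equivalence cannot be read off pointwise and must be argued via the generic nature of structural controllability. I would therefore frame the final step carefully by observing that structural controllability is by definition an existence statement over the parameter space, allowing us to combine the two Zariski-open conditions into a single nonempty choice of free parameters.
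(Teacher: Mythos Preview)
Your approach differs from the paper's. The paper argues directly at the trajectory level: assuming (\ref{eq:equiv_dynamics}) is controllable with $(T_B)_{ii}=\alpha_i$, it takes any control $u_i(t)$ that steers (\ref{eq:equiv_dynamics}) along a trajectory $\mathbf{x}(t)$, rewrites the $i$-th row for $i\in S$ as $x_i(t)=\alpha_i u_i(t)+\sum_j A_{ij}x_j(t)-\sum_j F_{ij}\dot x_j(t)+x_i(t)=:\hat u_i(t)$, and observes that the same trajectory then satisfies (\ref{eq:dynamics_input}) with input $\hat u_i$. The converse is declared similar. Theorem~\ref{theorem:matrix_pencil_controllability} is never invoked.

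Your matrix-pencil route is more systematic and meshes well with the rest of Section~\ref{sec:general}, where (\ref{eq:controll2})--(\ref{eq:controll3}) are precisely the conditions being translated into matroid constraints; the reduction of both systems to the common top-block condition $\mathrm{rank}[A_{RR}-zF_{RR}\mid A_{RS}-zF_{RS}]=n-|S|$ is a clean structural observation. The paper's argument, by contrast, is shorter and avoids any genericity bookkeeping. One caution: you correctly flag the solvability asymmetry as the main obstacle, but your Zariski-open fix in the direction (\ref{eq:equiv_dynamics})$\Rightarrow$(\ref{eq:dynamics_input}) requires the parameter set where $\det(sF_{RR}-A_{RR})\not\equiv 0$ to be nonempty, and neither the structural controllability of (\ref{eq:equiv_dynamics}) nor the standing solvability of (\ref{eq:dynamics_no_inputs}) supplies that directly (if the $RR$-blocks of $Q_F,Q_A,T_F,T_A$ all vanish, it is empty). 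The paper's trajectory argument elides the same point, so this is a subtlety of the lemma itself rather than of your method.
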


 \begin{proof}
 Suppose that the system (\ref{eq:equiv_dynamics}) are structurally controllable with $(T_{B})_{ii} = \alpha_{i}$ for some real $\alpha_{i}$'s.
 For a given initial state $\mathbf{x}_{0}$ and desired state $\mathbf{x}^{\ast}$, suppose that there exists a set of control inputs $u_{1}(t),\ldots,u_{k}(t)$ such that $\mathbf{x}(t) = \mathbf{x}^{\ast}$.  We have $$\alpha_{i}u_{i}(t) + \sum_{j}{A_{ij}x_{j}(t)} = \sum_{j}{F_{ij}\dot{x}_{j}(t)},$$ which is equivalent to
 \begin{equation}
 \label{eq:equiv_dynamics_proof}
 \alpha_{i}u_{i}(t) + \sum_{j}{A_{ij}x_{j}(t)} -\sum_{j}{F_{ij}\dot{x}_{j}(t)} + x_{i}(t) = x_{i}(t).
  \end{equation}
  Rearranging terms implies that $x_{i}(t) = \hat{u}_{i}(t)$, where $\hat{u}_{i}(t)$ is the left-hand side of (\ref{eq:equiv_dynamics_proof}).  Hence using $\hat{u}_{i}(t)$ as the input signal implies that structural controllability is achieved for the dynamics (\ref{eq:dynamics_input}) as well.  The proof of the converse is similar.
 \end{proof}

Lemma \ref{lemma:equivalent_rep} implies that it suffices to consider the conditions of Theorem \ref{theorem:matrix_pencil_controllability} under the equivalent system (\ref{eq:equiv_dynamics}).
We first define a matroid constraint on the set of non-input nodes that is equivalent to (\ref{eq:controll2}).  For the system (\ref{eq:equiv_dynamics}), the condition $\mbox{rank}[\mathcal{M}(I|Q_{A}|Q_{B}) \vee \mathcal{M}([I|T_{A}|T_{B}])] = 2n$ of Lemma \ref{lemma:condition_s_zero} is given by $$\mbox{rank}[\mathcal{M}([I|Q_{A}|0]) \vee \mathcal{M}([I|T_{A}|T_{B}(S)])] = 2n,$$ where $T_{B}(S)$ is a diagonal matrix with a free parameter in the $i$-th diagonal entry for $i \in S$ and zeros elsewhere.  In order to establish a matroid constraint for Lemma \ref{lemma:condition_s_zero} we have the following lemma.

\begin{lemma}
\label{lemma:matroid_non_input}
The function $$\rho_{1}(S) = \mbox{rank}(\mathcal{M}([I | Q_{A} | 0]) \vee \mathcal{M}([I | T_{A} | T_{B}(S)])) - \mbox{rank}(\mathcal{M}([I|Q_{A}]) \vee \mathcal{M}([I|T_{A}]))$$ is a matroid rank function.
 \end{lemma}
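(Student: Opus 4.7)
My plan is to exhibit $\rho_{1}$ as the rank function of a contraction of a single fixed matroid, from which the conclusion is immediate because the contraction of a matroid is again a matroid.

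Let $V = \{1,\ldots,n\}$ denote the full set of potential input indices. I would first introduce the ``universal'' matroid
\begin{equation*}
\mathcal{N} := \mathcal{M}([I|Q_{A}|0]) \vee \mathcal{M}([I|T_{A}|T_{B}(V)])
\end{equation*}
in which every diagonal entry of $T_{B}$ is activated with a free parameter, and split the common ground set into a ``skeleton'' $E_{0}$ consisting of the labels of the columns of $[I|Q_{A}]$ (equivalently $[I|T_{A}]$) together with $n$ ``input labels'' $\ell_{1},\ldots,\ell_{n}$ corresponding to the diagonal columns of $T_{B}(V)$.

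The central step I would then verify is the bookkeeping identity
\begin{equation*}
\mbox{rank}\bigl(\mathcal{M}([I|Q_{A}|0]) \vee \mathcal{M}([I|T_{A}|T_{B}(S)])\bigr) = \rho_{\mathcal{N}}\bigl(E_{0} \cup \{\ell_{i}: i \in S\}\bigr)
\end{equation*}
for every $S \subseteq V$. The argument is that for each $i \notin S$, the label $\ell_{i}$ is a loop in both summands of the $S$-union (the $i$-th column of $[I|Q_{A}|0]$ and of $[I|T_{A}|T_{B}(S)]$ is zero), so it is always optimal to include $\ell_{i}$ in the set $Y$ that appears in the matroid-union rank formula $\rho(X) = \min_{Y \subseteq X}(\rho_{1}(Y) + \rho_{2}(Y) + |X \setminus Y|)$; once all such loops are absorbed into $Y$, the remaining minimization is exactly the rank formula for $\mathcal{N}$ evaluated on $E_{0} \cup \{\ell_{i}: i \in S\}$ (here one uses that the columns of $T_{B}(S)$ and $T_{B}(V)$ agree on the indices in $S$). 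The $S = \emptyset$ case specializes to $\mbox{rank}(\mathcal{M}([I|Q_{A}]) \vee \mathcal{M}([I|T_{A}])) = \rho_{\mathcal{N}}(E_{0})$.

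Combining the two identities, the definition of $\rho_{1}$ becomes
\begin{equation*}
\rho_{1}(S) = \rho_{\mathcal{N}}\bigl(E_{0} \cup \{\ell_{i}: i \in S\}\bigr) - \rho_{\mathcal{N}}(E_{0}),
\end{equation*}
which is precisely the rank function of the contracted matroid $\mathcal{N}/E_{0}$ evaluated on $\{\ell_{i}: i \in S\}$. Since the matroid union of two matroids is a matroid (stated in the excerpt) and the contraction of a matroid is again a matroid (a standard matroid-theoretic fact), $\rho_{1}$ is a matroid rank function under the identification of $S$ with $\{\ell_{i}: i \in S\}$. As a self-contained fallback, one could instead verify the three conditions of Lemma \ref{lemma:matroid_rank} directly: $\rho_{1}(\emptyset) = 0$ is immediate, while the unit-increment and closure-type properties are inherited from $\rho_{\mathcal{N}}$ under the additive shift by the fixed set $E_{0}$. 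The main obstacle I anticipate is making this bookkeeping rigorous---specifically, formally identifying ``zeroing out a diagonal entry of $T_{B}$'' with ``deleting the corresponding ground-set label of $\mathcal{N}$'' via the matroid-union rank formula; once that identification is nailed down, the lemma reduces to the standard observation that contractions yield matroids.
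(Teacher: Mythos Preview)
Your argument is correct and is genuinely different from the paper's. The paper verifies the three rank-function axioms of Lemma~\ref{lemma:matroid_rank} by hand: it uses the matroid-union rank formula $\min_{Y}\{\hat r_{1}(Y)+\hat r_{2}(Y)+|V\setminus Y|\}$ and tracks how the minimizer changes when a single element is added to $S$, treating the unit-increment property and the closure-type property by separate minimizer manipulations. Your route instead builds the single ``universal'' matroid $\mathcal{N}$ with all input columns activated, observes that zeroing the $i$-th diagonal entry of $T_{B}$ is the same as deleting the label $\ell_{i}$ (since that column is a loop in both summands), and then identifies $\rho_{1}$ with the rank function of the contraction $\mathcal{N}/E_{0}$. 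The paper's approach is entirely self-contained within the rank axioms stated in the preliminaries, whereas yours invokes the additional (standard) fact that contractions of matroids are matroids. What your approach buys is structural clarity: it exhibits the matroid $\mathcal{M}_{1}$ explicitly as $\mathcal{N}/E_{0}$, which immediately suggests how to test independence in $\mathcal{M}_{1}$ and $\mathcal{M}_{1}^{\ast}$ algorithmically, and it sidesteps the somewhat delicate step in the paper's verification of axiom~(iii), where one must argue that the minimizer $X$ for $S\cup\{v\}$ also witnesses the bound for $S\cup\{v,w\}$.
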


 \begin{proof}
 First, note that $\rho_{1}(\emptyset) = 0$.  Next, consider $\rho_{1}(S \cup \{v\})$.  Let $\hat{\rho}^{S}$ denote the rank function of $(\mathcal{M}(I|Q_{A}|0) \vee \mathcal{M}([I|T_{A}|T_{B}(S)]))$, and let $\hat{r}_{1}^{S}$ and $\hat{r}_{2}^{S}$ denote the rank functions of $\mathcal{M}([I|Q_{A}|0])$ and $\mathcal{M}([I|T_{A}|T_{B}(S)])$, respectively.  We have $\rho_{1}(S \cup \{v\}) = \hat{\rho}^{S \cup \{v\}}(V)$.  Let $X^{\ast}$ be the set that minimizes $\hat{r}_{1}^{S}(X^{\ast}) + \hat{r}_{2}^{S}(X^{\ast}) + n -|X^{\ast}|$.  Observing that $\hat{r}_{1}^{S} = \hat{r}_{1}^{S \cup \{v\}}$, we have
 \begin{eqnarray*}
 \hat{\rho}^{S \cup \{v\}}(V) &=& \min_{V}{\{\hat{r}_{1}^{S \cup \{v\}}(X) + \hat{r}_{2}^{S \cup \{v\}}(X) + n - |X|\}}\\
 &\leq& 
 \hat{r}_{1}^{S \cup \{v\}}(X^{\ast}) + \hat{r}_{2}^{S \cup \{v\}}(X^{\ast}) + n - |X^{\ast}| \\
  &=& \hat{r}_{1}^{S}(X^{\ast}) + \hat{r}_{2}^{S \cup \{v\}}(X^{\ast}) + n - |X^{\ast}| \\
  &\leq& \hat{r}_{1}^{S}(X^{\ast}) + \hat{r}_{2}^{S}(X^{\ast}) + 1 + n - |X^{\ast}| = \hat{\rho}^{S}(V) + 1 = \rho_{1}(S) +1
  \end{eqnarray*}
 Hence $\rho_{1}(S \cup \{v\}) \leq \rho_{1}(S) + 1$.  On the other hand, for any set $X \subseteq V$,
 \begin{eqnarray*}
 \hat{r}_{1}^{S \cup \{v\}}(X) + \hat{r}_{2}^{S \cup \{v\}}(X) + n - |X| &=& \hat{r}_{1}^{S}(X) + \hat{r}_{2}^{S \cup \{v\}}(X) + n - |X| \\
 &\geq& \hat{r}_{1}^{S}(X) + \hat{r}_{2}^{S}(X) + n - |X|
 \end{eqnarray*}
 and so $\rho_{1}(S \cup \{v\}) \geq \rho_{1}(S)$.  Finally, suppose that $\rho_{1}(S \cup \{v\}) = \rho_{1}(S \cup \{w\}) = \rho_{1}(S)$.  Let $X$ be the set that minimizes $\hat{r}_{1}^{S \cup \{v\}}(X) + \hat{r}_{2}^{S \cup \{v\}}(X) + n - |X|$.  We then have
 \begin{eqnarray*}
 \hat{\rho}^{S \cup \{v,w\}}(X) &\leq& \hat{r}_{1}^{S \cup \{v,w\}}(X) + \hat{r}_{2}^{S \cup \{v,w\}}(X) + n - |X| \\
 &=& \hat{r}_{1}^{S \cup \{v\}}(X) + \hat{r}_{2}^{S \cup \{v,w\}}(X) + n - |X| \\
 &=& \hat{r}_{1}^{S \cup \{v\}}(X) + \hat{r}_{2}^{S \cup \{v\}}(X) + n - |X| = \rho_{1}(S \cup \{v\})
 \end{eqnarray*}
 Since $\rho_{1}(S)$ satisfies the three criteria of Lemma \ref{lemma:matroid_rank}, it is a matroid rank function.
  \end{proof}

 We let $\mathcal{M}_{1}$ denote the matroid with rank function defined by $\rho_{1}(S)$ from Lemma \ref{lemma:matroid_non_input}.  The following corollary allows us to express $\mbox{rank}([A|B]) = n$ as a matroid rank condition on the set of input nodes.
 \begin{corollary}
 \label{corollary:matroid}
 Let $\mathcal{M}_{1}$ be the matroid with rank function defined by $\rho_{1}(S)$ from Lemma \ref{lemma:matroid_non_input}.  Then the rank condition $\mbox{rank}(A|B) = n$ holds for input set $S$ if and only if $\rho_{1}(S) = 2n - \mbox{rank}(\mathcal{M}([I|Q_{A}|0]) \vee \mathcal{M}([I|T_{A}|0]))$. 
 \end{corollary}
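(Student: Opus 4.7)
The plan is to derive the corollary by unwinding the definition of $\rho_{1}(S)$ in light of Lemmas \ref{lemma:equivalent_rep} and \ref{lemma:condition_s_zero}. First I would apply Lemma \ref{lemma:equivalent_rep} to replace the dynamics (\ref{eq:dynamics_input}) by the equivalent dynamics (\ref{eq:equiv_dynamics}), so that the fixed input-parameter block satisfies $Q_{B}=0$ and the free input-parameter block is exactly the diagonal matrix $T_{B}(S)$ whose nonzero (free) entries are precisely those in rows indexed by $S$. Feeding this choice of $(Q_{A},T_{A},Q_{B},T_{B})$ into Lemma \ref{lemma:condition_s_zero} yields the clean matroid-rank equivalence
$$\mbox{rank}(A|B)=n \quad \Longleftrightarrow \quad \mbox{rank}\bigl(\mathcal{M}([I|Q_{A}|0]) \vee \mathcal{M}([I|T_{A}|T_{B}(S)])\bigr) = 2n.$$

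Next I would observe that the expression appearing on the right-hand side of the corollary differs from the quantity subtracted in the definition of $\rho_{1}$ only in a harmless way: adjoining an all-zero column block to a matrix does not change its linear matroid rank (the appended columns are in the span of anything), and matroid union respects this, so
$$\mbox{rank}\bigl(\mathcal{M}([I|Q_{A}|0]) \vee \mathcal{M}([I|T_{A}|0])\bigr) = \mbox{rank}\bigl(\mathcal{M}([I|Q_{A}]) \vee \mathcal{M}([I|T_{A}])\bigr),$$
which is exactly the term subtracted in the definition of $\rho_{1}(S)$. Substituting this identity into that definition, the equation $\rho_{1}(S) = 2n - \mbox{rank}(\mathcal{M}([I|Q_{A}|0]) \vee \mathcal{M}([I|T_{A}|0]))$ rearranges to $\mbox{rank}(\mathcal{M}([I|Q_{A}|0]) \vee \mathcal{M}([I|T_{A}|T_{B}(S)])) = 2n$, closing the loop with the first step.

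There is essentially no genuine obstacle here — the corollary is really just a reformulation of Lemma \ref{lemma:condition_s_zero} after the change of system supplied by Lemma \ref{lemma:equivalent_rep}. The only care required is bookkeeping: I have to make sure each matroid union is taken over a ground set of consistent cardinality (so that the zero columns corresponding to indices outside of $S$ sit in both operands and the union is well-defined), that the $T_{B}(S)$ encoding faithfully reflects the choice of input nodes, and that the convention in Lemma \ref{lemma:condition_s_zero} aligns with the augmented matrices used in $\rho_{1}$. Once these alignments are spelled out, the proof is a one-line substitution.
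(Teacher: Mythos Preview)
Your proposal is correct and follows essentially the same approach as the paper: apply Lemma~\ref{lemma:condition_s_zero} (in the equivalent dynamics of Lemma~\ref{lemma:equivalent_rep}, so that $Q_B=0$ and $T_B=T_B(S)$) to get the $2n$-rank condition, then substitute into the definition of $\rho_1(S)$. The only addition in your write-up is the explicit remark that adjoining a zero column block does not change the matroid-union rank, which the paper leaves implicit.
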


 \begin{proof}
 By Lemma \ref{lemma:condition_s_zero}, the condition $\mbox{rank}([A|B]) = n$ is satisfied if and only if $\mbox{rank}(\mathcal{M}([I|Q_{A}|0]) \vee \mathcal{M}([I|T_{A}|T_{B}(S)])) = 2n$.  Combining with the definition of $\rho_{1}(S)$ completes the proof.
 \end{proof}

 Finally, we express $\mbox{rank}([A|B]) = n$ as a matroid constraint on the set of non-input nodes.
 \begin{lemma}
 \label{lemma:s_zero_matroid}
 The  condition $\mbox{rank}([A|B]) = n$ holds if and only if the set of non-input nodes $R$ satisfies $R \in \mathcal{M}_{1}^{\ast}$, the dual of the matroid induced by rank function $\rho_{1}(S)$.
 \end{lemma}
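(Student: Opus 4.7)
The plan is to reduce the claim to a direct application of matroid duality combined with Corollary \ref{corollary:matroid}. The key insight is that the target rank value $2n - \mathrm{rank}(\mathcal{M}([I|Q_{A}|0]) \vee \mathcal{M}([I|T_{A}|0]))$ appearing in Corollary \ref{corollary:matroid} is precisely $\rho_{1}(V)$, the rank of the matroid $\mathcal{M}_{1}$. So the rank condition $\mathrm{rank}([A|B])=n$ is equivalent to $\rho_{1}(S) = \rho_{1}(V)$, i.e., $S$ is a \emph{spanning set} of $\mathcal{M}_{1}$. Under matroid duality, spanning sets of a matroid correspond exactly to complements of independent sets of the dual, which will give us the desired equivalence.

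First I would verify that $\rho_{1}(V) = 2n - \mathrm{rank}(\mathcal{M}([I|Q_{A}|0]) \vee \mathcal{M}([I|T_{A}|0]))$ by evaluating the defining formula of $\rho_{1}$ at $S=V$: when every state is an input, $T_{B}(V)$ is a full-rank diagonal matrix of free parameters, so $\mathrm{rank}(\mathcal{M}([I|Q_{A}|0]) \vee \mathcal{M}([I|T_{A}|T_{B}(V)])) = 2n$. Substituting this into $\rho_{1}(V)$ yields the claimed value, so Corollary \ref{corollary:matroid} can be restated as: $\mathrm{rank}([A|B])=n$ iff $\rho_{1}(S)=\rho_{1}(V)$.

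Next I would invoke the dual rank formula stated immediately after Definition \ref{def:matroid_dual}, namely $\rho_{1}^{\ast}(X) = \rho_{1}(V-X) + |X| - \rho_{1}(V)$. Setting $X = R$ (so $V-X = S$) gives
\begin{equation*}
\rho_{1}^{\ast}(R) = \rho_{1}(S) + |R| - \rho_{1}(V).
\end{equation*}
Since $R$ is independent in $\mathcal{M}_{1}^{\ast}$ iff $\rho_{1}^{\ast}(R) = |R|$, this independence is equivalent to $\rho_{1}(S) = \rho_{1}(V)$, which by the previous paragraph is equivalent to $\mathrm{rank}([A|B]) = n$. Combining both equivalences finishes the proof.

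I do not expect any real obstacle in this argument; the whole statement is essentially a translation between ``$S$ spans $\mathcal{M}_{1}$'' and ``$V\setminus S$ is independent in $\mathcal{M}_{1}^{\ast}$,'' and the two ingredients (Corollary \ref{corollary:matroid} and the closed-form dual rank) are both already in hand. The only point requiring a sentence of care is justifying the evaluation $\rho_{1}(V)$ above, since one must make sure that placing free parameters on every diagonal entry of $T_{B}(V)$ does indeed let $\mathcal{M}([I|T_{A}|T_{B}(V)])$ contribute enough additional rank to reach $2n$ in the matroid union; this follows from the identity block $I$ already present in $\mathcal{M}([I|Q_{A}|0])$, so no structural assumption on $(Q_{A},T_{A})$ is needed.
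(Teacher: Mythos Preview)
Your proposal is correct and follows essentially the same route as the paper: apply the dual rank formula $\rho_{1}^{\ast}(R)=\rho_{1}(V\setminus R)+|R|-\rho_{1}(V)$ to reduce the condition of Corollary~\ref{corollary:matroid} to $\rho_{1}^{\ast}(R)=|R|$, i.e., $R\in\mathcal{M}_{1}^{\ast}$. The only difference is that you spell out why the target value in Corollary~\ref{corollary:matroid} equals $\rho_{1}(V)$, a step the paper leaves implicit.
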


 \begin{proof}
 Let $\rho_{1}^{\ast}$ denote the rank function of the dual matroid $\mathcal{M}_{1}^{\ast}$.  We have $\rho_{1}^{\ast}(R) = \rho_{1}(V \setminus R) + |R| - \rho_{1}(V)$, which is equivalent to $\rho_{1}(S) = \rho_{1}^{\ast}(R) - |R| + \rho_{1}(V)$.  The constraint of Corollary \ref{corollary:matroid} is equivalent to $\rho_{1}(S) \geq \rho_{1}(V)$, which is in turn equivalent to $\rho_{1}^{\ast}(R) \geq |R|$.  This, however, holds only when $R \in \mathcal{M}_{1}^{\ast}$.
 \end{proof}

 We now turn to the constraint that $\mbox{rank}(A-zF|B) = n$ (Eq. (\ref{eq:controll3})). 
   The following intermediate lemma is the first step in our approach.

\begin{lemma}
\label{lemma:equivalent_graph}
There exists a graph $G^{\prime} = (V^{\prime}, E^{\prime})$, which can be constructed in polynomial time, such that the auxiliary graph $\hat{G} = (\hat{V}, \hat{E})$ corresponding to system (\ref{eq:equiv_dynamics}) is given by
\begin{eqnarray}
\label{eq:equiv_aux_graph1}
\hat{V} &=& V^{\prime} \cup \{u_{1}^{T},\ldots,u_{k}^{T}\} \cup \{u_{1}^{Q}, \ldots, u_{k}^{Q}\} \\
\label{eq:equiv_aux_graph2}
\hat{E} &=& E^{\prime} \cup \{(u_{i}^{T}, u_{i}^{Q}) : i=1,\ldots,k\} \cup \{(w_{i}^{T}, u_{i}^{T}) : i \in S\}
\end{eqnarray}
In this graph, the set $J_{1} = \{u_{1}^{Q},\ldots,u_{k}^{Q}\}$ and the condition of Lemma \ref{lemma:connectivity_sufficient_general} is satisfied if each node of $V^{\prime}$ that belongs to a cycle in $G^{\prime}$ is reachable to a node in $\{w_{i}^{T} : i \in S\}$ in the graph $\hat{G}$.
\end{lemma}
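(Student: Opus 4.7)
The plan is to construct $G^{\prime} = (V^{\prime}, E^{\prime})$ as the restriction of the auxiliary graph $\hat{G}$ to the non-$u$ vertices $V^{\prime} = \{w_i^T, w_i^Q, x_i^T, x_i^Q : i = 1, \ldots, n\}$, with $E^{\prime}$ obtained analogously, and then verify four things: (a) $V^{\prime}$ and $E^{\prime}$ depend only on $(F,A)$ and not on $S$; (b) the set $J_1 = \{u_1^Q, \ldots, u_k^Q\}$ is a valid choice; (c) the edges of $\hat{E}$ incident to $u$-vertices match the given description; and (d) the reachability condition of Lemma \ref{lemma:connectivity_sufficient_general} reduces to the stated condition on $G^{\prime}$.

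First I would observe that the bipartite graph $H$ used to compute the matching $m$ has edges determined by $N(T_A) \cup N(T_F)$ alone, so $m$ and the row set $J$ are independent of the input set $S$. Consequently, the edges of $\hat{E}$ of the forms $(w_i^T, x_j^Q)$, $(x_j^Q, w_i^T)$, $(w_i^T, w_i^Q)$, and $(w_i^Q, w_i^T)$ depend only on $(F, A)$ and the matching $m$, so these will be placed in $E^{\prime}$.

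Next I would compute $J_1$ for the equivalent system. Because $Q_B = 0$ in (\ref{eq:equiv_dynamics}), the first $2n$ rows of $\Omega$ vanish on the last $k$ columns while the last $k$ rows form $(0 \ I_k)$. Since $\Omega_J$ already has full row rank $n$ by Lemma \ref{lemma:bipartite_property} and is supported in the first $n$ columns, taking $J_1$ to be the set of $u$-rows produces a block-diagonal $\Omega_{J \cup J_1}$ with invertible diagonal blocks, so the associated $Q$-vertices form exactly the set $\{u_1^Q, \ldots, u_k^Q\}$. Using the block-diagonal inverse $\Omega_{J \cup J_1}^{-1}$, for every column indexed by some $u_\ell \in J_1$ the entry $\tilde{\Omega}_{v, u_\ell}$ equals $\delta_{v, u_\ell}$, so $S_- = \{u_1^Q, \ldots, u_k^Q\}$. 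Moreover the constraint $\tilde{\Omega}_{xz} = 0$ for all $z \in J_1$ in the last clause of $\hat{E}$ forces $x$ to be a $w$- or $x$-row, so every $\tilde{\Omega}$-generated edge lies inside $V^{\prime}$ and contributes only to $E^{\prime}$. Combining this with the mandatory edges $(u_i^T, u_i^Q)$ and the input edges coming from $N(T_B) = \{(i,i) : i \in S\}$ completes the description of $\hat{E}$.

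Finally, for the reachability claim, I would note that in $\hat{G}$ the vertices $u_i^T$ have no non-trivial in-edges and the vertices $u_i^Q$ have no out-edges, so no cycle of $\hat{G}$ can pass through any $u$-vertex. Hence $V^{\prime\prime}$ is contained in $V^{\prime}$ and coincides with the set of cycle vertices of $G^{\prime}$. A vertex $v \in V^{\prime}$ reaches $S_- = \{u_1^Q, \ldots, u_k^Q\}$ in $\hat{G}$ if and only if some directed path from $v$ leaves $V^{\prime}$ into the $u$-subgraph, which can only happen through an edge departing from some $w_i^T$ with $i \in S$; this yields the equivalent condition stated in the lemma. The principal obstacle is keeping the row-versus-vertex interpretation of $J_1$ straight throughout the $\tilde{\Omega}$ computation; once the block-diagonal structure of $\Omega_{J \cup J_1}$ is exploited, each of the remaining steps is a direct verification, and the polynomial-time construction of $G^{\prime}$ follows because all ingredients (matching on $H$, block inversion, scanning of $\tilde{\Omega}$) are polynomial in $n$ and $k$.
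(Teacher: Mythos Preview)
Your proposal is correct and follows essentially the same approach as the paper: exploit $Q_B=0$ to obtain the block-diagonal form $\Omega_{J\cup J_1}=\begin{pmatrix}\Psi & 0\\ 0 & I\end{pmatrix}$, take $J_1=\{u_1,\ldots,u_k\}$, observe that $\tilde{\Omega}$ (and hence the non-$u$ part of $\hat{G}$) is independent of $S$, and reduce the reachability-to-$S_-$ condition to reachability to $\{w_i^T:i\in S\}$. Your version is in fact more explicit than the paper's on two points the paper leaves implicit---that $S_-=\{u_1^Q,\ldots,u_k^Q\}$ exactly, and that no cycle of $\hat{G}$ can meet a $u$-vertex---so the argument is, if anything, slightly tighter.
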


\begin{proof}
The matrix $\Omega$ corresponding to (\ref{eq:equiv_dynamics}) is given by
\begin{displaymath}
\Omega = \left(
\begin{array}{cc}
Q_{A}-Q_{F} & 0 \\
I & 0 \\
0 & I
\end{array}
\right)
\end{displaymath}
By solvability of (\ref{eq:equiv_dynamics}) and Lemma \ref{lemma:bipartite_property}, we can select $n$ linearly independent rows $J$ from the first $2n$ rows of $\Omega$. Let $\Psi$ denote the matrix consisting of these linearly independent rows. The matrix $\Omega_{J}$ can be completed to a full-rank matrix by selecting $J_{1} = \{u_{1},\ldots,u_{k}\}$, giving $$\Omega_{J \cup J_{1}} = \left(
\begin{array}{cc}
\Psi & 0 \\
0 & I
\end{array}
\right), \quad
\Omega_{J \cup J_{1}}^{-1} = \left(
\begin{array}{cc}
\Psi^{-1} & 0 \\
0 & I
\end{array}
\right).$$   Note that the matrix $\Omega \Omega_{J \cup J_{1}}^{-1}$ does not depend on the input set $S$.  Let $G^{\prime}$ denote the auxiliary graph when $S = \emptyset$.  By construction, the auxiliary graph $\hat{G}$ with a non-empty input set $S$ is given by (\ref{eq:equiv_aux_graph1}) and (\ref{eq:equiv_aux_graph2}), since adding nodes to $S$ simply adds edges to $N(T_{B})$.

To prove that reachability to the nodes $\{w_{i}^{T} : i \in S\}$ is sufficient, note that it suffices that each node is reachable to $J_{1} = \{u_{1}^{Q}, \ldots, u_{k}^{Q}\}$ by Lemma \ref{lemma:connectivity_sufficient_general}.  Since each node in $\{w_{i}^{T} : i \in S\}$ is reachable to $J_{1}$, it suffices for all other nodes to be reachable to $\{w_{i}^{T} : i \in S\}$.
\end{proof}

As a consequence of Lemmas \ref{lemma:connectivity_sufficient_general} and \ref{lemma:equivalent_graph}, to ensure that $\mbox{rank}(zF - A | B) = n$, it suffices to select an input set $S$ such that each node in $G^{\prime}$ is reachable to $\{w_{i}^{T} : i \in S\}$.
In order to select such an input set, we define an equivalence relation $\sim$ on the nodes in $V^{\prime}$ as $i \sim j$ if node $i$ is path-connected to node $j$ in $G^{\prime}$ and vice versa.  We let $[i] = \{j : i \sim j\}$, and define $\overline{V} = \{[i] : i \in \hat{V}\}$ (so that $\overline{V}$ is the quotient set of $\hat{V}$ under the relation $\sim$).

 Define the graph $\overline{G} = (\overline{V}, \overline{E})$ by $(i,j) \in \overline{E}$ if there exists $i^{\prime} \in [i]$ and $j^{\prime} \in [j]$ such that $(i^{\prime}, j^{\prime}) \in E$.  Note that $\overline{G}$ is a directed acyclic graph.  We let $\overline{V}^{\prime}$ denote the set of isolated nodes, i.e., nodes that have no incoming edges in $\overline{G}$.

 \begin{lemma}
 \label{lemma:connectivity_DAG}
  All nodes in $\hat{V}$ are connected to $S$ iff for each $[i] \in \overline{V}^{\prime}$, $S \cap [i] \neq \emptyset$.
 \end{lemma}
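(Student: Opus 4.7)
The plan is to exploit the condensation DAG $\overline{G}$ and prove both directions of the equivalence via standard strongly-connected-component reasoning. No new machinery beyond the definitions already introduced is needed.

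For the forward direction, I fix any $[i]\in\overline{V}'$, pick a vertex $v\in[i]$, and consider the directed path in $\hat G$ witnessing that $v$ is connected to $S$. Projecting this path vertex by vertex through the quotient map yields a walk in $\overline G$ whose endpoint in $[v]$ is $[i]$. Because $[i]$ has no incoming edges in $\overline G$, this walk cannot enter $[i]$ from another SCC and must therefore be the trivial walk at $[i]$; consequently the other endpoint of the path, which lies in $S$, also lies in $[i]$, so $S\cap[i]\neq\emptyset$.

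For the converse, I fix an arbitrary $v\in\hat V$ and invoke the elementary DAG fact that, in the finite acyclic graph $\overline G$, the vertex $[v]$ is reachable from some source $[j]\in\overline{V}'$ via a directed path $[j]=[j_0]\to[j_1]\to\cdots\to[j_m]=[v]$ in $\overline G$. By hypothesis there exists some $s\in S\cap[j]$. I then lift this DAG-level path to an actual directed path in $\hat G$ from $s$ to $v$ iteratively: inside each SCC $[j_k]$ I use strong connectivity to travel from the current vertex to the starting endpoint of an edge $(x,y)\in E$ witnessing $[j_k]\to[j_{k+1}]$ in $\overline E$, then traverse $(x,y)$; inside the final SCC $[v]$ I use strong connectivity once more to reach $v$ itself.

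The two ingredients used are (i) acyclicity of the condensation $\overline G$, which guarantees that every vertex is reachable from some source, and (ii) the fact that any two vertices in a common SCC are joined by directed paths in both directions, which supports the lifting step. The main point requiring care is purely one of convention: aligning the direction of ``connected to $S$'' in $\hat G$ with the definition of $\overline{V}'$ as the SCCs with no incoming edges in $\overline G$, so that the forward implication uses sources (not sinks) of $\overline G$ as the sole possible locations of a witnessing $s\in S$. Once this convention is pinned down, the argument is essentially mechanical.
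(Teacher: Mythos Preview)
Your proposal is correct and follows essentially the same approach as the paper: both arguments use the condensation DAG $\overline{G}$, establish the forward implication by noting that a source SCC has no incoming edges so any connecting path must stay inside it, and establish the converse by tracing back from $[v]$ through predecessors in $\overline{G}$ until a source is reached and then lifting to a path in $\hat{G}$ via strong connectivity within each SCC. Your write-up is slightly more explicit than the paper's about the lifting step and about the direction convention, but the underlying reasoning is identical.
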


 \begin{proof}
 We first show that if $S \cap [i] \neq \emptyset$ for all $[i] \in \overline{V}^{\prime}$, then all nodes in $\hat{V}$ are connected to $S$.  Let $v \in \hat{V}$.  If $v \in [i]$ with $[i] \in \overline{V}^{\prime}$, then there exists $j \in S \cap [i]$ such that $j$ is path-connected to $v$.  If $v \in [i]$ with $[i] \notin \overline{V}^{\prime}$, then there are nodes $i^{\prime} \in [i]$ and $j_{1} \in [j_{1}]$ for some $[j_{1}] \in \overline{V}$ such that $(j_{1}, i^{\prime}) \in \hat{E}$.  Now, either $[j_{1}] \in \overline{V}^{\prime}$ or there exists $j_{1}^{\prime} \in [j_{1}]$ and $j_{2} \in [j_{2}]$ such that $(j_{2},j_{1}^{\prime}) \in \hat{E}$.  Since the graph $\overline{G}$ is acyclic, there exists a sequence of components $[i], [j_{1}], \ldots, [j_{L}]$, with $([j_{l+1}], [j_{l}]) \in \hat{E}$ and $[j_{L}] \in \overline{V}^{\prime}$.  This sequence of components defines a directed path from a node $v^{\prime} \in S$ to $v$.

 Now, suppose that all nodes in $\hat{V}$ are connected to $S$.  For each $v \in [i]$, with $[i] \in \overline{V}^{\prime}$, $v$ must be connected to at least one input node.  Since $[i] \in \overline{V}^{\prime}$, only other nodes in $[i]$ are connected to $v$.  Hence we must have $S \cap [i] \neq \emptyset$.
 \end{proof}

 Lemma \ref{lemma:connectivity_DAG} enables us to express the connectivity criterion as a matroid constraint.  First, define a function $\rho_{2}(S) = |\{[i] \in \overline{V}^{\prime} : [i] \cap S \neq \emptyset\}|$.  The following lemma describes the rank condition in terms of function $\rho_{2}(S)$.

 \begin{lemma}
 \label{lemma:rho_connectivity}
 Let $c = |\overline{V}^{\prime}|$. The function $\rho_{2}(S)$ is a matroid rank function, and all nodes in $\hat{V}$ are connected to $S$ iff $\rho_{2}(S) = c$.
 \end{lemma}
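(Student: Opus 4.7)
The plan is to recognize $\rho_2$ as a partition matroid rank function and then read off the connectivity criterion directly from Lemma \ref{lemma:connectivity_DAG}.

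For the matroid claim, I would verify the three conditions of Lemma \ref{lemma:matroid_rank} in turn. Condition (i) is immediate since no class in $\overline{V}^{\prime}$ meets the empty set, giving $\rho_2(\emptyset) = 0$. For condition (ii), adding a single element $v$ to $S$ can increase the number of intersected classes by at most one (the class $[v]$, and only if $[v] \in \overline{V}^{\prime}$ and $[v] \cap S = \emptyset$), and can never decrease it; this yields the sandwich $\rho_2(S) \leq \rho_2(S \cup \{v\}) \leq \rho_2(S) + 1$. For condition (iii), assume $\rho_2(S \cup \{v\}) = \rho_2(S \cup \{w\}) = \rho_2(S)$. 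Then neither $v$ nor $w$ introduces a new intersected class, so each of them either lies in a class outside $\overline{V}^{\prime}$ or in a class already hit by $S$. Adding both at once still introduces no new intersected class, so $\rho_2(S \cup \{v,w\}) = \rho_2(S)$, as required.

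A cleaner equivalent view, which I would mention to orient the reader, is that $\rho_2$ is the rank function of the partition matroid on the ground set whose parts are the equivalence classes $[i] \in \overline{V}^{\prime}$ with capacity one each, together with a single part of capacity zero containing all nodes whose class lies outside $\overline{V}^{\prime}$ (these behave as matroid loops). Standard results on partition matroids then immediately imply that $\rho_2$ is a rank function, and one recovers the combinatorial formula $\rho_2(S) = \sum_{[i] \in \overline{V}^{\prime}} \min(|S \cap [i]|, 1)$.

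For the second assertion, by Lemma \ref{lemma:connectivity_DAG} every node in $\hat{V}$ is connected to $S$ exactly when $S \cap [i] \neq \emptyset$ for every $[i] \in \overline{V}^{\prime}$. Since $\rho_2(S)$ counts the number of classes in $\overline{V}^{\prime}$ that $S$ meets and $c = |\overline{V}^{\prime}|$, this condition is equivalent to $\rho_2(S) = c$. I do not anticipate a real obstacle here: the only small subtlety is bookkeeping for vertices whose class lies outside $\overline{V}^{\prime}$, but as noted above these act as loops and leave monotonicity and submodularity untouched.
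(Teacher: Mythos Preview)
Your proposal is correct and follows essentially the same route as the paper: verify the rank-function axioms of Lemma~\ref{lemma:matroid_rank} directly and then invoke Lemma~\ref{lemma:connectivity_DAG} for the equivalence with $\rho_2(S)=c$. Your treatment is in fact more explicit than the paper's, which condenses conditions (ii) and (iii) into the single observation that $\rho_2(S\cup\{v\})-\rho_2(S)=1$ iff $v\in[i]\in\overline{V}'$ with $S\cap[i]=\emptyset$; your partition-matroid remark is a pleasant addition but not a different argument.
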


 \begin{proof}
 The function $\rho_{2}(S)$ is a matroid rank function because $\rho_{2}(\emptyset) = 0$ and $(\rho_{2}(S \cup \{v\}) - \rho_{2}(S)) \in \{0,1\}$, with $\rho_{2}(S \cup \{v\}) - \rho_{2}(S) = 1$ iff there exists $i$ such that $v \in [i]$ and $S \cap [i] = \emptyset$.  Furthermore, $\rho_{2}(S) = c$ if and only if for every $[i] \in \overline{V}^{\prime}$, $S \cap [i] \neq \emptyset$, which is exactly the condition of Lemma \ref{lemma:connectivity_DAG}.
\end{proof}

Let $\mathcal{M}_{2}$ denote the matroid induced by rank function $\rho_{2}(S)$.  We are now ready to state a sufficient matroid constraint on $R$ for the condition (\ref{eq:controll3}).

\begin{lemma}
\label{lemma:connectivity_matroid}
Let $\mathcal{M}_{2}^{\ast}$ be the dual of the matroid induced by $\rho_{2}(S)$. If the non-input nodes $R$ satisfy  $R \in \mathcal{M}_{2}^{\ast}$, then  the condition $\mbox{rank}((zF-A)|B) = n$ is satisfied. 
\end{lemma}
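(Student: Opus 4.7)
The plan is to chain three results already proved in the excerpt. The sufficient graph-theoretic condition for $\mathrm{rank}(zF - A \mid B) = n$ is furnished by Lemma \ref{lemma:connectivity_sufficient_general}; Lemma \ref{lemma:equivalent_graph} specializes that condition, in the auxiliary graph $\hat{G}$ built from the equivalent dynamics \eqref{eq:equiv_dynamics}, to connectivity of every node of $\hat{V}$ to $S$; and Lemma \ref{lemma:rho_connectivity} re-expresses that connectivity as a rank equality for $\rho_{2}$. The only work is to translate the condition $R \in \mathcal{M}_{2}^{\ast}$, which is a statement about the non-input nodes, into the statement $\rho_{2}(S) = c$ on the input nodes via the dual-matroid rank identity.

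First I would write the dual rank formula from Section \ref{subsec:matroid}, namely $\rho_{2}^{\ast}(R) = \rho_{2}(V \setminus R) + |R| - \rho_{2}(V)$. Independence of $R$ in $\mathcal{M}_{2}^{\ast}$ is the statement $\rho_{2}^{\ast}(R) = |R|$, and substituting into the identity yields $\rho_{2}(S) = \rho_{2}(V)$, where $S = V \setminus R$ is the input set. By construction of $\rho_{2}$ in Lemma \ref{lemma:rho_connectivity}, $\rho_{2}(V) = |\overline{V}^{\prime}| = c$, so this reduces to $\rho_{2}(S) = c$.

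Next I would invoke Lemma \ref{lemma:rho_connectivity} (together with Lemma \ref{lemma:connectivity_DAG}), which states that $\rho_{2}(S) = c$ is equivalent to $S$ meeting every isolated equivalence class in $\overline{V}^{\prime}$, and hence to all nodes of $\hat{V}$ being reachable to some element of $S$ in $\hat{G}$. In particular every node of $V^{\prime}$ that lies on a cycle of $G^{\prime}$ is reachable to a node in $\{w_{i}^{T} : i \in S\}$, which is exactly the hypothesis of Lemma \ref{lemma:equivalent_graph}. That lemma then certifies that the connectivity hypothesis of Lemma \ref{lemma:connectivity_sufficient_general} holds for the auxiliary graph $\hat{G}$ associated with the equivalent dynamics \eqref{eq:equiv_dynamics}. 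Applying Lemma \ref{lemma:connectivity_sufficient_general} gives $\mathrm{rank}(zF - A \mid B) = n$ for almost all choices of the free parameters, and by Lemma \ref{lemma:equivalent_rep} structural controllability of \eqref{eq:equiv_dynamics} is equivalent to that of \eqref{eq:dynamics_input}, so the rank condition carries over.

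I do not anticipate a serious obstacle: the proof is essentially bookkeeping. The subtle point to get right is the direction of the dual identity — specifically that $R$ being independent in $\mathcal{M}_{2}^{\ast}$ corresponds to $S$ containing a basis of $\mathcal{M}_{2}$ rather than the reverse — and to be careful that the conditions of Lemma \ref{lemma:equivalent_graph} are applied to the equivalent system \eqref{eq:equiv_dynamics}, so that Lemma \ref{lemma:equivalent_rep} is needed to transfer the conclusion back to the original dynamics \eqref{eq:dynamics_input}.
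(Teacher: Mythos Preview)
Your proposal is correct and follows essentially the same route as the paper: the core step is the dual-rank identity $\rho_{2}^{\ast}(R) = \rho_{2}(S) + |R| - c$, from which $R \in \mathcal{M}_{2}^{\ast}$ (i.e., $\rho_{2}^{\ast}(R) = |R|$) is equivalent to $\rho_{2}(S) = c$, and then Lemma~\ref{lemma:rho_connectivity} together with Lemmas~\ref{lemma:equivalent_graph} and~\ref{lemma:connectivity_sufficient_general} yields the rank condition. The paper's proof is terser---it records only the dual-rank computation and leaves the chain through Lemmas~\ref{lemma:rho_connectivity}, \ref{lemma:equivalent_graph}, and~\ref{lemma:connectivity_sufficient_general} implicit---and your final appeal to Lemma~\ref{lemma:equivalent_rep} is not needed here since the whole of Section~\ref{subsec:controllability_matroid_constraint} already works in the equivalent dynamics~\eqref{eq:equiv_dynamics}.
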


 \begin{proof}
 The rank function $\rho_{2}^{\ast}(R)$ of $\mathcal{M}_{2}^{\ast}$ can be written as $\rho_{2}^{\ast}(R) = \rho_{2}(V \setminus R) + |R| - \rho_{2}(V) = \rho_{2}(S) + |R| - c$, or equivalently, $\rho_{2}(S) = \rho_{2}^{\ast}(R) + c - |R|$.  Hence $\rho_{2}(S) = c$ is equivalent to $\rho_{2}^{\ast}(R) = |R|$, which holds if and only if $R \in \mathcal{M}_{2}^{\ast}$.
 \end{proof}
 
 We combine the results of Lemmas \ref{lemma:s_zero_matroid} and \ref{lemma:connectivity_matroid} to yield the following theorem.
 \begin{theorem}
 \label{theorem:matroid_constraints}
 If $R \in \mathcal{M}_{1}^{\ast} \cap \mathcal{M}_{2}^{\ast}$, then the system is controllable from input set $S = V \setminus R$.
 \end{theorem}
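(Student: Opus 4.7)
The plan is to verify that all three conditions in Theorem \ref{theorem:matrix_pencil_controllability} (structural controllability via the matrix pencil characterization) hold for the system (\ref{eq:equiv_dynamics}), and then appeal to Lemma \ref{lemma:equivalent_rep} to transfer structural controllability back to the original system (\ref{eq:dynamics_input}). The argument is essentially a combination of the earlier reductions, so it amounts to checking each pencil condition in sequence.

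First, I would note that condition (\ref{eq:controll1}) (solvability) is assumed at the outset in Section \ref{subsec:sys_model} and is preserved under the equivalent representation used in Lemma \ref{lemma:equivalent_rep}, so it holds automatically. Next, for condition (\ref{eq:controll2}), the hypothesis $R \in \mathcal{M}_1^{\ast}$ together with Lemma \ref{lemma:s_zero_matroid} directly yields $\mbox{rank}([A|B]) = n$. Finally, for condition (\ref{eq:controll3}), the hypothesis $R \in \mathcal{M}_2^{\ast}$ combined with Lemma \ref{lemma:connectivity_matroid} gives $\mbox{rank}((zF-A)|B) = n$ for all $z \in \mathbb{C}$ (generically, i.e., for almost all choices of the free parameters, which is precisely what structural controllability requires).

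With all three conditions of Theorem \ref{theorem:matrix_pencil_controllability} verified, the system (\ref{eq:equiv_dynamics}) is structurally controllable, and then Lemma \ref{lemma:equivalent_rep} transfers this conclusion to (\ref{eq:dynamics_input}), completing the proof. The bulk of the technical work has already been done in the preceding lemmas (mapping the pencil conditions to matroid rank conditions, then dualizing from the input set $S$ to the non-input set $R$), so this final theorem is a synthesis rather than a new argument.

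The main obstacle, if any, is purely bookkeeping: ensuring that the same input set $S = V \setminus R$ simultaneously witnesses both matroid conditions, and being explicit that the two conditions (\ref{eq:controll2}) and (\ref{eq:controll3}) can be satisfied by a common choice of free parameters. This is immediate because $R \in \mathcal{M}_1^{\ast} \cap \mathcal{M}_2^{\ast}$ is a single condition on $R$ (not two separate ones on possibly different parameter assignments), and structural controllability only requires existence of \emph{some} values of the free parameters making the pencil conditions hold — a property that is generic and hence simultaneously achievable for both conditions.
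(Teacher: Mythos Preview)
Your proposal is correct and follows exactly the same approach as the paper: the theorem is stated as a direct combination of Lemma~\ref{lemma:s_zero_matroid} (for condition~(\ref{eq:controll2})) and Lemma~\ref{lemma:connectivity_matroid} (for condition~(\ref{eq:controll3})), with solvability assumed and Lemma~\ref{lemma:equivalent_rep} providing the bridge to the original dynamics~(\ref{eq:dynamics_input}). The paper in fact gives no separate proof beyond the sentence ``We combine the results of Lemmas~\ref{lemma:s_zero_matroid} and~\ref{lemma:connectivity_matroid},'' so your write-up is, if anything, more explicit than the original.
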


 Having defined matroid-based sufficient conditions for controllability, we will next formulate the problem of selecting the minimum-size input set to guarantee structural controllability.

 \subsection{Minimum-Size Input Set Selection for Structural Controllability}
 \label{subsec:min_controll}
 Selecting a minimum-size set $S$ to satisfy structural controllability is equivalent to selecting a maximum-size set $R=V \setminus S$ that satisfies controllability.  
  Based on Theorem \ref{theorem:matroid_constraints}, the problem of selecting the minimum-size set of input nodes to guarantee structural controllability can be formulated as
  \begin{equation}
  \label{eq:minimum_input}
  \mbox{maximize}\left\{|R|: R \in \mathcal{M}_{1}^{\ast}, R \in \mathcal{M}_{2}^{\ast}\right\}.
  \end{equation}

  \begin{lemma}
  \label{lemma:min_input_polynomial}
  A minimum-size set of input nodes satisfying structural controllability can be obtained in polynomial-time.
  \end{lemma}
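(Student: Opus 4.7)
The plan is to recognize (\ref{eq:minimum_input}) as a cardinality matroid intersection problem and invoke the classical polynomial-time matroid intersection algorithm of Edmonds, which maximizes $|R|$ over $R \in \mathcal{I}_1^\ast \cap \mathcal{I}_2^\ast$ in time polynomial in $|V|$ and the cost of an independence oracle for each matroid. The complement relationship $R = V \setminus S$ means that a minimum-size input set $S$ is recovered directly from the maximum cardinality common independent set $R$.

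Concretely, I would proceed as follows. First, appeal to Theorem \ref{theorem:matroid_constraints} to restate the problem as maximizing $|R|$ over $\mathcal{M}_1^\ast \cap \mathcal{M}_2^\ast$. Second, verify that both $\mathcal{M}_1^\ast$ and $\mathcal{M}_2^\ast$ admit polynomial-time independence oracles. For $\mathcal{M}_2^\ast$, independence of $R$ is equivalent to $\rho_2^\ast(R) = |R|$, i.e., $\rho_2(V \setminus R) = c$, which by Lemma \ref{lemma:rho_connectivity} reduces to checking that $V \setminus R$ hits every source component $[i] \in \overline{V}'$; this requires only a strongly-connected-component decomposition of $\overline{G}$, which is linear in the size of the auxiliary graph. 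For $\mathcal{M}_1^\ast$, independence of $R$ reduces via the dual rank formula to evaluating $\rho_1(V \setminus R)$, which is the rank of a union of two linear matroids; matroid union rank (and hence independence testing in $\mathcal{M}_1$) is itself solvable in polynomial time by the matroid partition/union algorithm of Edmonds, and taking the dual preserves polynomial-time oracle access.

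Third, having established polynomial oracles, I would invoke Edmonds' matroid intersection theorem, which returns a maximum cardinality $R^\ast \in \mathcal{M}_1^\ast \cap \mathcal{M}_2^\ast$ in time polynomial in $n$ and in the oracle cost. The output input set is $S^\ast = V \setminus R^\ast$, which by Theorem \ref{theorem:matroid_constraints} renders the system structurally controllable and is of minimum cardinality among all such sets.

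The main obstacle is the polynomial-time oracle for $\mathcal{M}_1^\ast$: one must argue carefully that $\rho_1$, which is defined as the difference of ranks of two matroid unions of explicit linear matroids over $\mathbb{R}$, can be computed efficiently. Here I would note that each constituent matroid $\mathcal{M}([I|Q_A|0])$, $\mathcal{M}([I|T_A|T_B(S)])$, and $\mathcal{M}([I|Q_A])$, $\mathcal{M}([I|T_A])$ is a linear matroid represented by an explicit matrix (with generic entries for $T_A, T_B$ handled symbolically or by random substitution with high probability), so that independence and rank can be evaluated by Gaussian elimination, and the matroid union rank is then obtained in polynomial time via Edmonds' algorithm. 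Once polynomial-time oracles are in hand, the outer matroid intersection is standard, completing the proof.
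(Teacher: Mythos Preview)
Your proposal is correct and follows essentially the same approach as the paper: recognize (\ref{eq:minimum_input}) as a maximum-cardinality matroid intersection problem over $\mathcal{M}_1^\ast \cap \mathcal{M}_2^\ast$, invoke a polynomial-time matroid intersection algorithm, and return $S = V \setminus R$. The paper's own proof is terser---it simply cites the $O(n^{5/2}\tau)$ intersection algorithm from \cite{schrijver2003combinatorial} and asserts without elaboration that independence in each matroid can be tested in polynomial time---whereas you supply the details of the oracle implementations (SCC decomposition for $\mathcal{M}_2^\ast$, matroid-union rank for $\mathcal{M}_1^\ast$), which is a welcome addition but not a different route.
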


  \begin{proof}
  The problem of selecting a minimum-size set of input nodes to satisfy structural controllability is formulated as (\ref{eq:minimum_input}).  Eq. (\ref{eq:minimum_input}) is a matroid intersection problem, which can be solved in time $O(n^{5/2}\tau)$, where $\tau$ is the time required to test if a set $R$ is in $\mathcal{M}_{1}^{\ast}$ and $\mathcal{M}_{2}^{\ast}$ \cite{schrijver2003combinatorial}. The independence of $R$ in each set can be evaluated in polynomial time.
  \end{proof}


  Algorithm \ref{algo:min_set} gives a polynomial-time procedure for solving (\ref{eq:minimum_input}) using the maximum cardinality matroid intersection algorithm of \cite[Ch. 41]{schrijver2003combinatorial}.   Lemmas \ref{lemma:matroid_non_input} and \ref{lemma:connectivity_matroid} and the above discussion generalize the main result of \cite{liu2011controllability} from free matrices to systems with a mix of free and fixed parameters.

        \begin{center}
\begin{algorithm}[!ht]
	\caption{Algorithm for selecting the minimum-size input set to guarantee structural controllability.}
	\label{algo:min_set}
	\begin{algorithmic}[1]
		\Procedure{Min\_Controllable\_Set}{$\mathcal{M}_{1}^{\ast}$, $\mathcal{M}_{2}^{\ast}$}
        \State \textbf{Input:} Matroids $\mathcal{M}_{1}^{\ast}$ and $\mathcal{M}_{2}^{\ast}$
        \State \textbf{Output:} Set of inputs $S$
        \State $R \leftarrow \emptyset$
        \While{1}
        \State $E_{\mathcal{M}_{1}^{\ast}, \mathcal{M}_{2}^{\ast}}(R) \leftarrow \emptyset$
        \For{All $i \in R$, $j \notin R$}
        \If{$(R-\{i\}\cup\{j\}) \in \mathcal{M}_{1}^{\ast}$}
        \State $E_{\mathcal{M}_{1}^{\ast}, \mathcal{M}_{2}^{\ast}}(R) \leftarrow E_{\mathcal{M}_{1}^{\ast}, \mathcal{M}_{2}^{\ast}}(R) \cup \{(i,j)\}$
        \EndIf
        \If{$(R-\{i\} \cup \{j\}) \in \mathcal{M}_{2}^{\ast}$}
        \State $E_{\mathcal{M}_{1}^{\ast}, \mathcal{M}_{2}^{\ast}}(R) \leftarrow E_{\mathcal{M}_{1}^{\ast}, \mathcal{M}_{2}^{\ast}}(R) \cup \{(j,i)\}$
        \EndIf
        \EndFor
        \State $D_{\mathcal{M}_{1}^{\ast}, \mathcal{M}_{2}^{\ast}}(R) \leftarrow$ directed graph with vertex set $V$ and edge set $E_{\mathcal{M}_{1}^{\ast}, \mathcal{M}_{2}^{\ast}}(R)$
        \State $X_{1} \leftarrow \{j \in V \setminus R : (R \cup \{j\}) \in \mathcal{M}_{1}^{\ast}\}$
        \State $X_{2} \leftarrow \{j \in V \setminus R : (R \cup \{j\}) \in \mathcal{M}_{2}^{\ast}\}$
        \If{path exists from a node in $X_{1}$ to a node in $X_{2}$}
        \State $P \leftarrow$ shortest $X_{1}$-$X_{2}$ path
        \State $R \leftarrow R \Delta P$
        \Else
        \State \textbf{break}
        \EndIf
        \EndWhile
        \State $S \leftarrow V \setminus R$
        \State \Return{$S$}
        \EndProcedure
        \end{algorithmic}
        \end{algorithm}
        \end{center}

  \subsection{Input Selection for Joint Performance and Controllability}
  \label{subsec:joint_perf_controll}
  We now consider the problem of maximizing a monotone performance metric $f(S)$ while satisfying controllability with a set of up to $k$ input nodes.  Based on Theorem \ref{theorem:matroid_constraints}, the problem formulation is given by
  \begin{equation}
  \label{eq:joint_perf_controll}
  \begin{array}{ll}
  \mbox{maximize}_{S \subseteq V} & f(S) \\
  \mbox{s.t.} &  |S| \leq k \\
   & (V \setminus S) \in \mathcal{M}_{1}^{\ast} \\
   & (V \setminus S) \in \mathcal{M}_{2}^{\ast}
   \end{array}
   \end{equation}

   Eq. (\ref{eq:joint_perf_controll}) is a combinatorial optimization problem, making it NP-hard to solve in the general case.  The following lemma describes an equivalent formulation to (\ref{eq:joint_perf_controll}).

   \begin{lemma}
   \label{lemma:joint_perf_controll_equiv}
   Define $r_{1} = \mbox{rank}(\mathcal{M}_{1})$ and $r_{2} = \mbox{rank}(\mathcal{M}_{2})$.
   Let $\hat{\mathcal{M}}_{1} = \mathcal{M}_{1} \vee U_{k-r_{1}}$ and $\hat{\mathcal{M}}_{2} = \mathcal{M}_{2} \vee U_{k-r_{2}}$, with $\hat{\mathcal{B}}_{1}$ and $\hat{\mathcal{B}}_{2}$ denoting the sets of bases of $\hat{\mathcal{M}}_{1}$ and $\hat{\mathcal{M}}_{2}$, respectively.  Let $S^{\ast}$ denote the optimal solution to the problem
   \begin{equation}
   \label{eq:joint_perf_controll_equiv}
   \begin{array}{ll}
   \mbox{maximize}_{S \subseteq V} & f(S) \\
   \mbox{s.t.} & S \in \hat{\mathcal{B}}_{1} \cap \hat{\mathcal{B}}_{2}
   \end{array}
   \end{equation}
   The set $S^{\ast}$ is the optimal solution to (\ref{eq:joint_perf_controll}).
   \end{lemma}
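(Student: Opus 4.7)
The plan is to show that the two problems have the same optimal value by demonstrating that the feasible set of (\ref{eq:joint_perf_controll_equiv}) is contained in that of (\ref{eq:joint_perf_controll}), and that any feasible solution of (\ref{eq:joint_perf_controll}) can be augmented (without decreasing $f$) to a feasible solution of (\ref{eq:joint_perf_controll_equiv}). The key preparatory step is translating both the dual matroid constraints on $R = V \setminus S$ and the basis constraints on $S$ into equivalent rank conditions $\rho_1(S) = r_1$ and $\rho_2(S) = r_2$.

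For the first translation, I would use Definition \ref{def:matroid_dual} together with the rank formula $\rho_i^\ast(V \setminus S) = \rho_i(S) + |V \setminus S| - \rho_i(V)$ to show that $R \in \mathcal{M}_i^\ast$ is equivalent to $\rho_i(V \setminus R) = r_i$, hence that feasibility of $S$ in (\ref{eq:joint_perf_controll}) is exactly $|S|\le k$, $\rho_1(S) = r_1$, and $\rho_2(S) = r_2$. For the second translation, I would apply the matroid union rank formula given after Definition \ref{def:matroid_union} to $\hat{\mathcal{M}}_i = \mathcal{M}_i \vee U_{k-r_i}$, and show that (assuming $k \le n$) $\hat{\mathcal{M}}_i$ has rank $k$, with $X$ independent in $\hat{\mathcal{M}}_i$ iff $|X| - \rho_i(X) \le k - r_i$. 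Consequently $S \in \hat{\mathcal{B}}_i$ iff $|S| = k$ and $\rho_i(S) = r_i$. Intersecting the two characterizations then yields $S \in \hat{\mathcal{B}}_1 \cap \hat{\mathcal{B}}_2$ iff $|S| = k$, $\rho_1(S) = r_1$, and $\rho_2(S) = r_2$.

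Comparing the two characterizations, the feasible set of (\ref{eq:joint_perf_controll_equiv}) is exactly the subset of the feasible set of (\ref{eq:joint_perf_controll}) consisting of sets of cardinality exactly $k$. In particular, the optimum of (\ref{eq:joint_perf_controll_equiv}) is at most the optimum of (\ref{eq:joint_perf_controll}). For the reverse inequality, I would invoke monotonicity of $f$: given any $S$ feasible for (\ref{eq:joint_perf_controll}) with $|S| < k$, augment $S$ by any $k - |S|$ elements drawn from $V \setminus S$ to obtain $S'$ with $|S'| = k$. Since rank functions are monotone and capped at $r_i$, one gets $\rho_i(S') \ge \rho_i(S) = r_i$ and hence $\rho_i(S') = r_i$, so $S'$ lies in $\hat{\mathcal{B}}_1 \cap \hat{\mathcal{B}}_2$; monotonicity of $f$ then gives $f(S') \ge f(S)$. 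Combining the two inequalities yields equality of the optima, so the maximizer $S^\ast$ of (\ref{eq:joint_perf_controll_equiv}) is also a maximizer of (\ref{eq:joint_perf_controll}).

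The main technical obstacle is the bases-of-union characterization in the second step; everything else is essentially a bookkeeping argument. One subtlety I would flag in the write-up is that the proof uses monotonicity of $f$, which is stated in the surrounding text (``monotone performance metric $f$'') and is essential — without monotonicity, augmenting $S$ could strictly decrease the objective, and the equivalence would fail. I would also briefly note the mild sanity assumption $r_i \le k \le n$, which is needed for $\hat{\mathcal{M}}_i$ to have rank exactly $k$ and for the augmentation step to be carried out.
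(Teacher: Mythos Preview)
Your proposal is correct and follows the same overall strategy as the paper: both arguments establish that the feasible sets of (\ref{eq:joint_perf_controll}) and (\ref{eq:joint_perf_controll_equiv}) coincide on sets of cardinality exactly $k$, and then invoke monotonicity of $f$ to handle the case $|S|<k$. The difference is purely in how the coincidence is verified. The paper works directly with bases and the definition of matroid union: it completes $R=V\setminus S$ to a basis $\overline{R}$ of $\mathcal{M}_i^\ast$, uses that $V\setminus\overline{R}$ is then a basis of $\mathcal{M}_i$, and decomposes $S$ accordingly; the converse direction decomposes a common basis $S$ as a basis of $\mathcal{M}_i$ together with at most $k-r_i$ extra elements. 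You instead translate both constraints into the single rank condition $\rho_i(S)=r_i$ via the dual rank formula and the characterization of independence in $\mathcal{M}_i\vee U_{k-r_i}$, which makes the equivalence transparent and avoids the basis-completion step. Your route is arguably tidier and makes the role of the assumption $r_i\le k\le n$ explicit; the paper's route stays closer to the combinatorial definitions and requires no rank-function computation. Either is fine, and your flag that monotonicity of $f$ is essential is exactly right.
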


   \begin{proof}
   First, we have that the optimal solution to (\ref{eq:joint_perf_controll}) satisfies $|S| = k$.  If not, then since $f(S)$ is monotone, we can add elements to $S$ and increase the value of $f$ without violating the constraints $(V \setminus S) \in \mathcal{M}_{1}^{\ast}$ and $(V \setminus S) \in \mathcal{M}_{2}^{\ast}$.  Furthermore, if $R=(V \setminus S) \in \mathcal{M}_{1}^{\ast}$, then $R$ can be completed to a basis $\overline{R}$ of $\mathcal{M}_{1}^{\ast}$, and we have $(V \setminus \overline{R}) \subseteq (V \setminus R)$, implying that $S = (V \setminus \overline{R}) \cup \overline{S}$ for some set $\overline{S}$ with $|\overline{S}| = k-r_{1}$.  Since $\overline{R}$ is a basis of $\mathcal{M}_{1}^{\ast}$, $V \setminus \overline{R}$ is a basis of $\mathcal{M}_{1}$, and hence $S \in \hat{\mathcal{B}}_{1}$.  A similar result holds for $\hat{\mathcal{M}}_{2}$.

     Now, consider $S^{\ast} \in \hat{\mathcal{B}}_{1} \cap \hat{\mathcal{B}}_{2}$. By construction $\mbox{rank}(\hat{\mathcal{M}}_{1}) = \mbox{rank}(\hat{\mathcal{M}}_{2}) = k$.  We can therefore write $S^{\ast} = S_{1}^{\ast} \cup \hat{S}_{1}^{\ast}$, where $S_{1}^{\ast}$ is a basis of $\mathcal{M}_{1}$ and $|\hat{S}_{1}^{\ast}| = k-r_{1}$.  Since $S_{1}^{\ast}$ is a basis of $\mathcal{M}_{1}$, we have $(V \setminus S_{1}^{\ast}) \in \mathcal{M}_{1}^{\ast}$, and thus the set $R = (V \setminus S^{\ast}) \subseteq (V \setminus S_{1}^{\ast}) \in \mathcal{M}_{1}^{\ast}$, implying that $R = (V \setminus S^{\ast}) \in \mathcal{M}_{1}^{\ast}$.  A similar argument implies that $(V \setminus S^{\ast}) \in \mathcal{M}_{2}^{\ast}$.
   \end{proof}

   By Lemma \ref{lemma:joint_perf_controll_equiv}, solving (\ref{eq:joint_perf_controll}) is equivalent to solving (\ref{eq:joint_perf_controll_equiv}).  We present a two-stage algorithm for approximating (\ref{eq:joint_perf_controll_equiv}).  In the first stage, the algorithm solves a relaxed, continuous version of the problem.  In the second stage, the algorithm rounds the solution to an integral value satisfying the constraints of (\ref{eq:joint_perf_controll_equiv}).   

   The algorithm is defined as Algorithm \ref{algo:Joint_PC} below.  It contains two subroutines, namely, MAX\_WEIGHTED\_BASIS and SWAP\_ROUND.  The subroutine \\
    MAX\_WEIGHTED\_BASIS takes as input two matroids $\mathcal{M}^{\prime}$ and $\mathcal{M}^{\prime\prime}$ with the same ground set $V$ (with $|V| = n$), as well as a weight vector $\boldsymbol{\alpha} \in \mathbb{R}^{n}$, and outputs a set $I \in \mathcal{B}(\mathcal{M}^{\prime}) \cap \mathcal{B}(\mathcal{M}^{\prime\prime})$ such that $\sum_{i \in I}{\alpha_{i}}$ is maximized (provided at least one common basis exists).  Polynomial-time algorithms for finding such sets are well-known~\cite[Ch. 43]{schrijver2003combinatorial}.

   The subroutine SWAP\_ROUND takes as input a vector $\mathbf{r}$ in the common base polytope of two matroids $\mathcal{M}^{\prime}$ and $\mathcal{M}^{\prime\prime}$, and outputs a set $I \in \mathcal{B}(\mathcal{M}^{\prime}) \cap \mathcal{B}(\mathcal{M}^{\prime\prime})$.  The algorithm is randomized with the output satisfying $\mathbf{E}(f(I)) \geq F(\mathbf{r})$.  The swap round algorithm was proposed in \cite{chekuri2010dependent}.

      \begin{center}
\begin{algorithm}[!ht]
	\caption{Input selection algorithm for joint performance and controllability.}
	\label{algo:Joint_PC}
	\begin{algorithmic}[1]
		\Procedure{Input\_Select}{$f$, $\hat{\mathcal{M}}_{1}$, $\hat{\mathcal{M}}_{2}$, $k$}
        \State \textbf{Input:} Monotone submodular objective function $f: 2^{V} \rightarrow \mathbb{R}$
        \State \quad Matroids $\hat{\mathcal{M}}_{1}$, $\hat{\mathcal{M}}_{2}$
        \State \quad Maximum number of inputs $k$
        \State \textbf{Output:} Set of inputs $S$
        \State $\delta \leftarrow \frac{1}{9k^{2}}$, $t \leftarrow 0$, $\mathbf{y}(0) \leftarrow \mathbf{0}$
        \While{$t < 1$}
        \State $R(t)$ contains each $j \in V$ independently with probability $y_{j}(t)$
        \For{$j \in V$}
        \State $\omega_{j}(t) \leftarrow \mathbf{E}[f(R(t) \cup \{j\}) - f(R(t))]$
        \EndFor
        \State $I(t) \leftarrow \mbox{MAX\_WEIGHTED\_BASIS}(\hat{\mathcal{M}}_{1}, \hat{\mathcal{M}}_{2}, \boldsymbol{\omega})$
        \State $\mathbf{y}(t + \delta) \leftarrow \mathbf{y}(t) + \delta\cdot\mathbf{1}(I(t))$
        \State $t \leftarrow (t+\delta)$
        \EndWhile
        \State $S \leftarrow \mbox{SWAP\_ROUND}(\mathbf{y}(1), \hat{\mathcal{M}}_{1}, \hat{\mathcal{M}}_{2})$
        \State \Return{$S$}
        \EndProcedure
        \end{algorithmic}
        \end{algorithm}
        \end{center}

        In Algorithm \ref{algo:Joint_PC}, the $\mathbf{1}(I(t))$ denotes the incidence vector of set $I(t)$, which has a $1$ in the $i$-th entry if $i \in I$ and $0$ otherwise. The following theorem describes the optimality bound of Algorithm \ref{algo:Joint_PC}.

        \begin{theorem}
        \label{theorem:Joint_PC}
        Algorithm \ref{algo:Joint_PC} runs in polynomial time with complexity $O(\tau n^{5})$.  Letting $S^{\ast}$ denote the optimal solution to (\ref{eq:joint_perf_controll}), the vector $\mathbf{y}(1)$ returned by the continuous relaxation satisfies $F(\mathbf{y}(1)) \geq (1-1/e)f(S^{\ast})$, where $F$ is the multilinear relaxation of $f(S)$. The rounded solution $S$ is a feasible solution to (\ref{eq:joint_perf_controll}). 
        \end{theorem}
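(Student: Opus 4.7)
The plan is to verify the three claims of the theorem separately: polynomial-time complexity, the $(1-1/e)$ lower bound on the continuous relaxation, and feasibility of the rounded output. Throughout, the key tools are the continuous greedy analysis of Calinescu--Chekuri--P\'al--Vondr\'ak and the swap-rounding procedure of Chekuri--Vondr\'ak--Zenklusen, combined with Lemma \ref{lemma:joint_perf_controll_equiv}.

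For the running time, I would observe that the outer while loop executes $1/\delta = 9k^2 = O(n^2)$ times. Inside each iteration, computing $\omega_j(t)$ for every $j \in V$ requires estimating the expectation by drawing $R(t) \sim \mathbf{y}(t)$ a polynomial number of times (say $O(n^2)$) and evaluating $f$ at each sample in time $\tau$. The subroutine MAX\_WEIGHTED\_BASIS on two matroids reduces to a weighted matroid intersection, which runs in polynomial time~\cite[Ch.~41]{schrijver2003combinatorial}. Aggregating these costs gives the claimed $O(\tau n^5)$ bound, with SWAP\_ROUND contributing only a polynomial additive term.

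For the optimality bound, the argument follows the now-standard continuous greedy template. First I note that $\omega_j(t) = \partial F / \partial y_j$ at $\mathbf{y}(t)$, since $F$ is multilinear and $f(R(t) \cup \{j\}) - f(R(t))$ is exactly the coordinate-wise finite difference. Second, by Lemma \ref{lemma:joint_perf_controll_equiv}, the optimal $S^\ast$ lies in $\hat{\mathcal{B}}_1 \cap \hat{\mathcal{B}}_2$, so $\mathbf{1}(S^\ast)$ is feasible for MAX\_WEIGHTED\_BASIS, yielding
\begin{equation*}
\sum_{j \in I(t)} \omega_j(t) \;\geq\; \sum_{j \in S^\ast} \omega_j(t).
\end{equation*}
Third, monotonicity and submodularity of $f$ imply $\sum_{j \in S^\ast} \omega_j(t) \geq f(S^\ast) - F(\mathbf{y}(t))$, via telescoping marginal gains between $\mathbf{y}(t)$ and $\mathbf{y}(t) \vee \mathbf{1}(S^\ast)$. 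Combining, the discrete increment satisfies $F(\mathbf{y}(t+\delta)) - F(\mathbf{y}(t)) \geq \delta (f(S^\ast) - F(\mathbf{y}(t))) - O(\delta^2 k)$, and summing over $1/\delta$ steps (with $\delta = 1/(9k^2)$ calibrated to absorb second-order errors and sampling noise) gives the differential inequality whose solution is $F(\mathbf{y}(1)) \geq (1-1/e) f(S^\ast)$.

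For feasibility, I would observe that $\mathbf{y}(1) = \delta \sum_t \mathbf{1}(I(t))$ is an explicit convex combination of $1/\delta$ incidence vectors of common bases in $\hat{\mathcal{B}}_1 \cap \hat{\mathcal{B}}_2$, and therefore lies in the common base polytope of $\hat{\mathcal{M}}_1$ and $\hat{\mathcal{M}}_2$. Invoking SWAP\_ROUND on this convex decomposition returns a set $S \in \hat{\mathcal{B}}_1 \cap \hat{\mathcal{B}}_2$ with $\mathbf{E}[f(S)] \geq F(\mathbf{y}(1))$, and by Lemma \ref{lemma:joint_perf_controll_equiv} this set is feasible for (\ref{eq:joint_perf_controll}). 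The main obstacle is the careful accounting of the discretization and sampling errors in the continuous greedy step: once $\delta$ and the sample size are chosen quadratically in $k$, the second-order error terms are dominated by the first-order gain, and the $(1-1/e)$ bound survives. The rest is bookkeeping that mirrors the original analysis of continuous greedy over matroid polytopes.
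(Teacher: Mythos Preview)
Your proposal is correct and follows essentially the same approach as the paper: both adapt the continuous-greedy analysis of Calinescu--Chekuri--P\'al--Vondr\'ak to the setting of two matroid basis constraints (the paper does this via three intermediate lemmas mirroring Lemmas~3.1--3.3 of that work, which is exactly the template you sketch), invoke SWAP\_ROUND on the convex combination $\mathbf{y}(1)$ of common bases, and appeal to Lemma~\ref{lemma:joint_perf_controll_equiv} to translate feasibility back to (\ref{eq:joint_perf_controll}); the complexity accounting via $O(n^{2})$ iterations times the $O(\tau n^{3})$ cost of weighted matroid intersection is also identical.
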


        The proof is given in the appendix. 
        The following theorem provides additional optimality guarantees when the objective function $f(S)$ is linear.
        \begin{theorem}
        \label{theorem:joint_PC_linear}
        If the function $f(S)$ is of the form $f(S) = \sum_{i \in S}{\tau_{i}}$ for some real-valued weights $\tau_{1},\ldots,\tau_{n}$, then the solution to (\ref{eq:joint_perf_controll}) can be obtained in polynomial time.
        \end{theorem}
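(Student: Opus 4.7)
The plan is to reduce the problem to weighted matroid intersection, for which polynomial-time algorithms are known. By Lemma \ref{lemma:joint_perf_controll_equiv}, problem (\ref{eq:joint_perf_controll}) is equivalent to the problem (\ref{eq:joint_perf_controll_equiv}) of maximizing $f(S)$ subject to $S \in \hat{\mathcal{B}}_1 \cap \hat{\mathcal{B}}_2$, where $\hat{\mathcal{B}}_i$ is the set of bases of $\hat{\mathcal{M}}_i = \mathcal{M}_i \vee U_{k-r_i}$. Hence it suffices to show that (\ref{eq:joint_perf_controll_equiv}) can be solved in polynomial time when $f$ is linear.

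When $f(S) = \sum_{i \in S} \tau_i$, the weight of any set is the sum of its element weights, so (\ref{eq:joint_perf_controll_equiv}) becomes: find a common basis $S \in \hat{\mathcal{B}}_1 \cap \hat{\mathcal{B}}_2$ of maximum total weight $\sum_{i \in S} \tau_i$. This is exactly the maximum-weight matroid intersection problem on the two matroids $\hat{\mathcal{M}}_1$ and $\hat{\mathcal{M}}_2$ (restricted to common bases, which by construction both have rank $k$). The maximum-weight matroid intersection problem can be solved in polynomial time, e.g., using the algorithm of \cite[Ch.~41]{schrijver2003combinatorial}, which is precisely the MAX\_WEIGHTED\_BASIS subroutine already invoked in Algorithm \ref{algo:Joint_PC}. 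The running time is polynomial in $n$ and in the time required to test matroid independence in $\hat{\mathcal{M}}_1$ and $\hat{\mathcal{M}}_2$, both of which are polynomial (as noted in the proof of Lemma \ref{lemma:min_input_polynomial}, since $\mathcal{M}_1$ and $\mathcal{M}_2$ have polynomial-time independence oracles, and matroid union and truncation preserve this property).

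The proof essentially consists of invoking Lemma \ref{lemma:joint_perf_controll_equiv} together with the classical weighted matroid intersection result; there is no substantive obstacle, because the matroid structure needed to make the argument go through has already been established in Section \ref{subsec:controllability_matroid_constraint}. The only subtlety worth mentioning is that one should verify that the constraint $|S|=k$ holds automatically at any maximizer (as established in the proof of Lemma \ref{lemma:joint_perf_controll_equiv} via monotonicity — which for linear $f$ with possibly negative weights should be handled either by assuming nonnegative $\tau_i$ or by noting that maximizing over common bases of rank $k$ still yields a feasible solution to (\ref{eq:joint_perf_controll})). Under this observation, the theorem follows immediately.
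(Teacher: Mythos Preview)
Your proposal is correct and takes essentially the same approach as the paper: both reduce (\ref{eq:joint_perf_controll}) to a maximum-weight common basis problem for two matroids and then invoke Edmonds' weighted matroid intersection algorithm. The paper's proof is terser (it does not explicitly cite Lemma~\ref{lemma:joint_perf_controll_equiv} or discuss oracle complexity), but the argument is identical in substance; your remark about negative $\tau_i$ and monotonicity is a fair caveat that the paper leaves implicit.
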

        \begin{proof}
        If the function $f(S)$ is of the form $f(S) = \sum_{i \in S}{\tau_{i}}$, then (\ref{eq:joint_perf_controll}) is equivalent to maximizing a modular function subject to two matroid basis constraints.  For problems of this form, the Edmonds weighted matroid intersection algorithm provides an optimal solution in polynomial time \cite{schrijver2003combinatorial}.
        \end{proof}

        \subsection{Selecting Input Nodes for Performance-Controllability Trade-Off}
        \label{subsec:GCI}
        In this section, we study input selection based on a trade-off between performance and controllability, instead of treating controllability as a constraint that must be satisfied.  This maximization may be beneficial when the  number of input nodes $k$ is insufficient to guarantee controllability.

        We first introduce two graph controllability indices (GCIs) that can be traded off with a performance metric in order to maximize the level of performance and controllability.  The first controllability index $c_{1}(S)$ is given by
        \begin{equation}
        \label{eq:GCI1}
        c_{1}(S) = \max{\{|V^{\prime}| : \mbox{rank}(A(V^{\prime}) | B(V^{\prime})) = |V^{\prime}|\}},
        \end{equation}
        where $A(V^{\prime})$ and $B(V^{\prime})$ are sub-matrices of $A$ and $B$ consisting of the rows and columns indexed in $V^{\prime}$.  
        Intuitively, $c_{1}(S)$ is the size of the largest subgraph of $V$ such that the zero modes of all nodes in the subgraph are controllable.  The second GCI quantifies the controllability of the nonzero modes, characterized by the constraint $\mbox{rank}(zF-A | B) = n$.  We define $c_{2}(S)$ by 
        \begin{equation}
        \label{eq:GCI2}
        c_{2}(S) = |\{i \in V : i \mbox{ is reachable to $\mathbf{u}$ in $\hat{G}$}\}|
        \end{equation}
        where $\hat{G}$ is defined as in Section \ref{subsec:aux_graph}.  The function $c_{2}(S)$ quantifies the number of nodes that are reachable to the input nodes, and hence satisfy controllability of the nonzero modes.  If $c_{1}(S) + c_{2}(S) = 2n$, then both the zero and nonzero modes of all nodes are controllable, and hence controllability is satisfied.  Otherwise, the problem of joint maximization of performance and controllability can be formulated as
        \begin{equation}
        \label{eq:GCI_opt}
        \begin{array}{ll}
        \mbox{maximize}_{S \subseteq V} & f(S) + \eta (c_{1}(S) + c_{2}(S)) \\
        \mbox{s.t.} & |S| \leq k
        \end{array}
        \end{equation}

       The trade-off parameter $\eta \geq 0$ is used to vary the relative weight assigned to performance or controllability criteria.  When $\eta$ is small, then nodes are selected for performance alone; at the other extreme, when $\eta$ is large, nodes are primarily selected to maximize controllability. The following result is the first step in deriving efficient algorithms for solving (\ref{eq:GCI_opt}).

        \begin{theorem}
        \label{theorem:GCI_submodular}
        The functions $c_{1}(S)$ and $c_{2}(S)$ are submodular as functions of $S$.
        \end{theorem}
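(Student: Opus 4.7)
The plan is to establish submodularity of $c_1$ and $c_2$ separately, exploiting the graph-theoretic and matroid structures developed earlier in the paper.

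For $c_2(S)$, the key observation is that in the auxiliary graph $\hat G$ of Lemma~\ref{lemma:equivalent_graph}, the input block $\{u_i^Q, u_i^T\}$ is attached to the rest of the graph only through the $|S|$ edges $(w_i^T, u_i^T)$ for $i \in S$, together with the fixed edges $(u_i^T, u_i^Q)$. Hence a vertex of $\hat G$ reaches $\mathbf{u}$ in $\hat G$ iff it reaches $w_i^T$ in $G'$ for some $i \in S$. Defining $P_i := \{v \in \hat V : v \text{ reaches } w_i^T \text{ in } G'\}$, which does not depend on $S$, one obtains $c_2(S) = \bigl|\bigcup_{i \in S} P_i\bigr|$, which is the standard coverage function and is therefore monotone and submodular.

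For $c_1(S)$, the plan is to reduce to a bipartite-matching condition. By Edmonds' theorem, the generic condition $\mathrm{rank}(A(V')\mid B(V')) = |V'|$ is equivalent to the existence of a perfect matching of rows $V'$ into the columns indexed by $V' \cup (V' \cap S)^B$ in the bipartite graph of structurally nonzero entries of $(A\mid B)$. Applying Hall's condition, together with the monotonicity observation that the maximum is always attained at some $V' \supseteq S$, yields the decomposition $c_1(S) = |S| + M(S)$, where $M(S) := \max\{|Q|: Q \subseteq V \setminus S,\ Q \text{ is matchable into } S \cup Q \text{ through the }A\text{-edges}\}$. Since $|S|$ is modular, it suffices to show that $M$ is submodular.

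Submodularity of $M$ would follow from a standard matching-exchange (uncrossing) argument: given optimal witness matchings for $M(A \cup \{v\})$ and $M(B)$ with $A \subseteq B$ and $v \notin B$, one combines their edges and uses alternating-path reconfiguration to extract feasible matchings for $M(A)$ and $M(B \cup \{v\})$ whose sizes sum to at least $M(A \cup \{v\}) + M(B)$, which is the diminishing-returns inequality. The principal technical obstacle is extending this argument to the mixed (fixed + free) matrix setting: when $A$ has both fixed entries $Q_A$ and free entries $T_A$, Edmonds' matching characterization must be replaced by the matroid-union rank characterization of generic mixed-matrix rank from Lemma~\ref{lemma:condition_s_zero}, and the alternating-path exchange becomes a matroid-intersection augmenting-path exchange in the underlying matroid-union matroid; once this substitution is carried out, the diminishing-returns inequality for $M$, and hence the submodularity of $c_1$, follows.
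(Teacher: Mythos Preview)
Your treatment of $c_2$ is essentially the paper's argument: you make explicit that $c_2(S)=\bigl|\bigcup_{i\in S}P_i\bigr|$ is a coverage function, while the paper verifies the diminishing-returns inequality for exactly that coverage function directly. These are the same proof.

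For $c_1$, however, you take a substantially harder and ultimately incomplete route. The paper's proof is a two-line application of results already in hand: Lemma~\ref{lemma:matroid_non_input} has already established that $\rho_1$ is a matroid rank function, and the paper simply writes $c_1(S)=\rho_1(V\setminus S)+|S|$, invokes Lemma~\ref{lemma:submod_comp} (submodularity is preserved under complementation of the argument), and notes that adding the modular term $|S|$ preserves submodularity. No new combinatorics is needed because the matroid structure was already proved.

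Your proposal instead attempts to re-derive this structure from scratch via a bipartite-matching characterization and an alternating-path uncrossing argument for your quantity $M(S)$. There are two problems. First, this is redundant: the matching/matroid-union analysis you describe is precisely what went into proving Lemma~\ref{lemma:matroid_non_input}, so you are reproving an earlier lemma inside the proof of a later theorem. Second, and more seriously, the argument you give is not actually complete in the setting of the paper: you explicitly note that for mixed matrices (fixed $Q_A$ plus free $T_A$) Edmonds' pure-matching characterization does not apply and must be replaced by the matroid-union rank formula, and you then assert without proof that ``once this substitution is carried out'' the exchange argument goes through. That is the entire technical content, and leaving it as a promissory note is a genuine gap. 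The exchange you sketch is also not a standard off-the-shelf result for functions of the form $M(S)$, since $S$ enters both as the excluded set for $Q$ and as part of the matching target, so some care would be required even in the purely free case.

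The fix is simply to invoke Lemma~\ref{lemma:matroid_non_input} and Lemma~\ref{lemma:submod_comp}, as the paper does.
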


        \begin{proof}
        The function $c_{1}(S)$ is equal to the maximum-size set of non-input nodes with controllable zero modes from the input nodes, plus the number of input nodes.  This can be written as $c_{1}(S) = \rho_{1}(V\setminus S) + |S|$, where $\rho_{1}$ is defined as in Lemma \ref{lemma:matroid_non_input}. Since $\rho_{1}$ is a matroid rank function, $\rho_{1}$ is submodular and hence $\rho_{1}(V \setminus S)$ is submodular as well by Lemma \ref{lemma:submod_comp}.  Since the sum of submodular functions is submodular, $\rho_{1}(V \setminus S) + |S|$ is submodular as a function of $S$.

        It remains to show submodularity of $c_{2}(S)$. Let $S \subseteq T$, and suppose that $v \notin T$.  We have that $c_{2}(T \cup \{v\}) - c_{2}(T)$ is equal to the number of nodes that are reachable to $v$, but not to any node in $T$.  Since $S \subseteq T$, any node that is not reachable to $T$ is automatically not reachable to $S$.  Hence, any node that is reachable to $v$ but not any node in $T$ is also reachable to $v$ but not any node in $S$, implying that $c_{2}(T \cup \{v\}) - c_{2}(T) \geq c_{2}(S \cup \{v\}) - c_{2}(S)$.
        \end{proof}


        A greedy algorithm for approximating (\ref{eq:GCI_opt}) is as follows.  The set $S$ is initialized to be empty, and the algorithm proceeds over $k$ iterations.  At the $i$-th iteration, the element $v \in V$ maximizing $f(S \cup \{v\}) + \eta (c_{1}(S \cup \{v\}) + c_{2}(S \cup \{v\})$ is selected and added to $S$, terminating after $k$ iterations.   The following theorem gives an optimality bound for this algorithm.
        \begin{theorem}
        \label{theorem:GCI_optimal}
        The solution $S$ obtained by the greedy algorithm satisfies $(f(S) + \eta(c_{1}(S) + c_{2}(S)))  \geq (1-1/e)(f(S^{\ast}) + \eta(c_{1}(S^{\ast}) + c_{2}(S^{\ast})))$, where $S^{\ast}$ is the optimal solution to (\ref{eq:GCI_opt}).
        \end{theorem}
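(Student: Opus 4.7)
The plan is to reduce the statement to the classical Nemhauser--Wolsey--Fisher $(1-1/e)$ bound for greedy maximization of a monotone submodular function subject to a cardinality (uniform matroid) constraint. The constraint $|S|\le k$ is exactly a uniform matroid, so it remains to verify that the objective $g(S):=f(S)+\eta(c_1(S)+c_2(S))$ is a nonnegative, monotone nondecreasing, submodular function with $g(\emptyset)\ge 0$. Submodularity of $f$ is part of the problem setup, submodularity of $c_1$ and $c_2$ is Theorem~\ref{theorem:GCI_submodular}, and nonnegative linear combinations preserve submodularity, so only monotonicity of $c_1$ and $c_2$ needs a short argument.

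First I would check monotonicity of $c_1$. Using the identity $c_1(S)=\rho_1(V\setminus S)+|S|$ established in the proof of Theorem~\ref{theorem:GCI_submodular}, for any $v\notin S$ we have $|S\cup\{v\}|=|S|+1$ and, by property (ii) of Lemma~\ref{lemma:matroid_rank} applied to $\rho_1$, $\rho_1(V\setminus(S\cup\{v\}))\ge \rho_1(V\setminus S)-1$, since removing a single element from a set can drop its rank by at most one. Adding the two inequalities gives $c_1(S\cup\{v\})\ge c_1(S)$. Next I would check monotonicity of $c_2$: when a node $v$ is added to $S$, the auxiliary graph $\hat G$ gains the edge $(w_v^T,u_v^T)$ and the node $w_v^T$ becomes reachable to $\mathbf u$, and all nodes that were reachable to $\mathbf u$ through $S$ remain so. Hence the set of nodes reachable to $\mathbf u$ can only grow, giving $c_2(S\cup\{v\})\ge c_2(S)$.

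With both $c_1$ and $c_2$ monotone and submodular, $g=f+\eta(c_1+c_2)$ is a nonnegative linear combination of monotone submodular functions, hence is itself monotone submodular. The greedy procedure described in the paragraph preceding the theorem is precisely the standard greedy algorithm for maximizing a monotone submodular function over the uniform matroid of rank $k$, so the Nemhauser--Wolsey--Fisher theorem applies and yields
\begin{equation*}
g(S)\ge\bigl(1-1/e\bigr)\,g(S^{\ast}),
\end{equation*}
which is exactly the claimed bound after substituting $g=f+\eta(c_1+c_2)$. To be safe about the normalization hypothesis, I would also note $g(\emptyset)=f(\emptyset)+\eta(c_1(\emptyset)+c_2(\emptyset))\ge 0$, and if the particular performance metric $f$ does not satisfy $f(\emptyset)=0$ one can work with the shifted objective $g(S)-g(\emptyset)$, which inherits monotonicity and submodularity and gives the same multiplicative guarantee on the normalized objective.

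The only step with any subtlety is the monotonicity of $c_1$: submodularity was argued there by passing to $\rho_1(V\setminus S)$ (an antitone function of $S$), and one might therefore worry that $c_1$ itself is antitone. The point is that the extra $+|S|$ term compensates exactly for the at-most-$1$ decrease of $\rho_1(V\setminus S)$ when a single element is added to $S$, giving net monotonicity. Everything else is a direct invocation of the classical greedy guarantee.
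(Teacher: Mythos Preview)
Your proposal is correct and follows essentially the same route as the paper: show that $g(S)=f(S)+\eta(c_1(S)+c_2(S))$ is monotone submodular and then invoke the Nemhauser--Wolsey--Fisher $(1-1/e)$ bound for greedy under a cardinality constraint. Your argument is in fact slightly more complete, since the paper simply asserts monotonicity of $c_1$ and $c_2$ whereas you supply the short verifications (particularly the nice observation that the $+|S|$ term in $c_1(S)=\rho_1(V\setminus S)+|S|$ exactly offsets the at-most-one drop in rank).
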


        \begin{proof}
        Since $f(S)$, $c_{1}(S)$, and $c_{2}(S)$ are submodular and monotone as functions of $S$, the function $f(S) + \eta(c_{1}(S) + c_{2}(S))$ is monotone and submodular.  Hence, Theorem 4.1 of \cite{nemhauser1978analysis} implies that the greedy algorithm returns a set $S$ satisfying a $(1-1/e)$-optimality bound with the optimal set $S^{\ast}$, thus completing the proof.  
        \end{proof} 
\section{Input Selection in Strongly Connected Networks}
\label{sec:connected}
The input selection algorithms of the previous section hold for any arbitrary structured linear descriptor system.  In this section, we investigate the case where the graph induced by the system matrices $F$, $A$, and $B$ is strongly connected, i.e., there exists a directed path from any node $i$ to any node $j$.  In this case, there is additional problem structure that reduces the complexity and improves the optimality bounds of our input selection algorithms.  The following lemma gives system properties that hold with high probability for strongly connected networks.

\begin{lemma}
\label{lemma:strongly_conn_condition}
If the graph induced by $(F,A,B)$ is strongly connected and \\
$\mbox{rank}(A|B) = n$, then the condition $\mbox{rank}((zF-A)|B) = n$ holds for almost any fixed parameter matrices $Q_{F}$, $Q_{A}$, and $Q_{B}$.
\end{lemma}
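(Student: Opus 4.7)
The plan is to use Lemma \ref{lemma:connectivity_sufficient_general} to reduce condition (\ref{eq:controll3}) to a reachability condition in the auxiliary graph $\hat{G}$, and then exploit strong connectivity of the original graph to verify that condition for generic fixed parameters.

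First I would fix the structures of $T_F, T_A, T_B$ and $Q_F, Q_A, Q_B$ and treat the numerical entries of the $Q$'s as indeterminates. The matrix $\tilde{\Omega} = \Omega\, \Omega_{J \cup J_1}^{-1}$ has entries that are rational functions in these indeterminates, so its nonzero pattern, and therefore the edge set of $\hat{G}$, is constant on a dense open subset of parameter space (namely, the complement of the zero set of finitely many polynomials). On this generic set the auxiliary graph has a fixed, maximal edge set that I would call $\hat{G}_{\mathrm{gen}}$; the set of $Q$'s where strictly fewer edges occur is a finite union of algebraic subvarieties of positive codimension, hence has Lebesgue measure zero. It therefore suffices to show that in $\hat{G}_{\mathrm{gen}}$ every node belonging to a cycle is reachable to $S_-$.

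Next I would show that $S_-$ is non-empty and, in fact, contains nodes associated with the input block. Since $\mbox{rank}(A|B) = n$, Lemma \ref{lemma:condition_s_zero} gives that the matroid union has rank $2n$. Since only $n$ rows are available in the $(I|Q_A)$ and $(I|T_A)$ blocks, the completion set $J_1$ must draw columns from the input part $B$, so by the definition of $S_-$ at least one $u^Q$- or $w^Q$-node sits in $S_-$. Strong connectivity of the graph induced by $(F,A,B)$ guarantees that the nonzero entries of $A$, $F$, and $B$ together realize a directed path from any state $i$ to any state $j$ and from every state to some input. Every edge on such a path contributes either (i) an edge of $\hat{G}$ through $N(T_A) \cup N(T_F) \cup N(T_B)$, or (ii) a nonzero entry of $\tilde{\Omega}$ (generically) and hence an edge of $\hat{G}_{\mathrm{gen}}$. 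Concatenating these edges yields a directed walk in $\hat{G}_{\mathrm{gen}}$ from any $w^T$- or $x^Q$-node to some node of $S_-$. In particular, every node on a cycle of $\hat{G}_{\mathrm{gen}}$ reaches $S_-$, and Lemma \ref{lemma:connectivity_sufficient_general} gives $\mbox{rank}((zF-A)|B) = n$ for almost all $Q_F, Q_A, Q_B$.

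The main obstacle I anticipate is the careful handling of edge orientation in $\hat{G}$. Some edges are reversed relative to the matching $m$, and additional edges come from the rows in $J_1$ via $\tilde{\Omega}$; one must therefore verify that the directed walks witnessing strong connectivity can be assembled consistently with this alternating orientation and with the matching. I expect strong connectivity to be robust enough for this: any backward segment in $\hat{G}$ can be bypassed by exploiting the strongly connected structure of $(F,A,B)$ to construct an alternative forward path, possibly routed through nodes whose inclusion in $\hat{G}_{\mathrm{gen}}$ depends on the maximality of the nonzero pattern of $\tilde{\Omega}$. Formalizing this bookkeeping, together with the genericity argument for $\tilde{\Omega}$, is where the technical work of the proof concentrates.
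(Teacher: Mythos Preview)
Your route through Lemma~\ref{lemma:connectivity_sufficient_general} and the auxiliary graph is genuinely different from the paper's argument. The paper does not touch $\hat{G}$ at all here: it works directly with the Leibniz expansion of $\det(zF-A)$, observing that each nonzero term corresponds to a cycle decomposition of the system graph. At a root $z_i$ the determinant vanishes only because two such cycle products cancel; replacing a linearly dependent column of $z_iF-A$ by a column of $B$ creates, via strong connectivity, a new cycle whose weight is a free parameter of $B$, and this new term generically destroys the cancellation. Strong connectivity enters once and in a very concrete way (existence of one new cycle through the input), so the genericity statement reduces to the non-vanishing of a single polynomial in the free parameters of $B$.

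Your plan, by contrast, pushes all the work into showing that every cycle of $\hat{G}_{\mathrm{gen}}$ reaches $S_{-}$. This is exactly the step the paper carries out only in the three \emph{special} cases of Section~\ref{sec:cases} (Lemmas~\ref{lemma:consensus_input_connected}, \ref{lemma:double_connected}, \ref{lemma:free_connected}), and in each case it requires explicitly computing $\Omega\Omega_{J\cup J_1}^{-1}$ and exhibiting paths edge by edge. For general $Q_F,Q_A,Q_B$ the edge set of $\hat{G}$ depends on the nonzero pattern of $\tilde{\Omega}$ after an inversion, and the alternating orientation induced by the matching $m$ means that a directed path in the original graph does not translate mechanically into a directed path in $\hat{G}$. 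You correctly identify this as the main obstacle, but ``I expect strong connectivity to be robust enough'' is not yet an argument; without a concrete construction of the auxiliary-graph paths, the proposal has a real gap at precisely the point where the paper's determinantal shortcut avoids the issue entirely. If you want to make your approach go through, you would essentially need a general version of the Section~\ref{sec:cases} path-lifting lemmas, which is substantially more work than the paper's one-paragraph direct argument.
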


\begin{proof}
If $\mbox{rank}(A|B) = n$, then $\det{(zF-A)} \neq 0$ for all $z \in \mathbb{C}$, except for some complex numbers $z_{1},\ldots,z_{n}$. Consider $z_{i}$ such that $\det{(z_{i}F-A)} = 0$.  We have that $$\det{(z_{i}F-A)} = \sum_{\sigma \in S_{n}}{\prod_{j=1}^{n}{(z_{i}F-A)_{j\sigma(j)}}}.$$ We observe that each $\sigma$ corresponding to a nonzero term of the summation induces a decomposition of the graph into cycles $j_{1},\ldots,j_{m}$, in which $j_{l+1} = \sigma(j_{l})$ and $j_{1} = \sigma(j_{m})$.  If the determinant is zero, then there exist at least two such decompositions, corresponding to distinct permutations $\sigma$ and $\sigma^{\prime}$, with products of weights that sum to zero.   

Suppose that the $l$-th column of the matrix $(z_{i}F-A)$ is linearly dependent on the other columns.  Suppose that the column is replaced by one of the input columns from $B$.  Since the graph is strongly connected, a new cycle is induced by adding the input column.  For almost all values of the free parameters of $B$, the cycle will not cancel out with the other cycles in the graph, and hence the determinant will be nonzero.
\end{proof}

  If $\mbox{rank}((zF-A)|B) = n$, then only the constraint $(V \setminus S) \in \mathcal{M}_{1}^{\ast}$ must hold.  We now describe how this additional problem structure improves the runtime and optimality gaps of each of the input selection problems considered.

  \subsection{Minimum-Size Input Set Selection in Strongly Connected Networks}  In the strongly connected case, the minimum-size input set selection problem reduces to
\begin{equation}
\label{eq:min_input_connected}
\begin{array}{ll}
\mbox{maximize} & |R| \\
\mbox{s.t.} & R \in \mathcal{M}_{1}^{\ast}
\end{array}
\end{equation}

The following algorithm can be used to compute the solution to (\ref{eq:min_input_connected}).  Initialize the set $R = \emptyset$, and let $V = \{1,\ldots,n\}$ be the set of possible input nodes.  The algorithm iterates over all nodes in $V$, starting with the node indexed $1$. For each node $i$, test if $(R \cup \{i\}) \in \mathcal{M}_{1}^{\ast}$.  If so, set $R = R \cup \{i\}$.  The algorithm terminates after all $n$ nodes have been tested.


\begin{lemma}
\label{lemma:greedy_min_connected}
When the network is strongly connected and the condition of Lemma \ref{lemma:strongly_conn_condition} holds, the greedy algorithm returns the minimum-size input set to guarantee controllability within $O(n)$ computations of the matroid independence condition $R \in \mathcal{M}_{1}^{\ast}$.
\end{lemma}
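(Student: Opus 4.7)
The plan is to reduce the claim to the classical Rado--Edmonds theorem for the matroid greedy algorithm. First I would observe that by Lemma \ref{lemma:strongly_conn_condition}, under the strong connectivity hypothesis and the assumption that $\mbox{rank}(A|B)=n$, the second controllability condition $\mbox{rank}(zF-A|B)=n$ holds automatically for almost all parameter values. Therefore, by Theorem \ref{theorem:matroid_constraints} (specialized via Lemma \ref{lemma:s_zero_matroid}), the only remaining requirement for structural controllability is $R=V\setminus S\in\mathcal{M}_{1}^{\ast}$, so the minimum-size input set problem reduces to (\ref{eq:min_input_connected}), namely maximizing $|R|$ subject to $R\in\mathcal{M}_{1}^{\ast}$.

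Next I would recognize the described procedure as the standard matroid greedy algorithm applied to $\mathcal{M}_{1}^{\ast}$ with unit weights: it scans the ground set $V$ once and adds each element whenever doing so preserves independence. By the Rado--Edmonds characterization of matroids, for any matroid the greedy algorithm produces a maximum-weight independent set; in the unit-weight case this yields a basis, i.e., a maximum-cardinality independent set. Hence the returned $R$ is a basis of $\mathcal{M}_{1}^{\ast}$, and $S=V\setminus R$ is a minimum-cardinality set with $(V\setminus S)\in\mathcal{M}_{1}^{\ast}$, which by the reduction above is a minimum-size input set guaranteeing controllability.

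For the complexity claim, I would simply note that the outer loop executes exactly $n$ iterations, each of which performs a single independence test of the form $(R\cup\{i\})\in\mathcal{M}_{1}^{\ast}$; therefore the total number of such tests is $O(n)$. Each test itself can be carried out in polynomial time via the rank function of $\mathcal{M}_{1}^{\ast}$ given explicitly in Section \ref{subsec:controllability_matroid_constraint}, though that cost is outside the $O(n)$ oracle count being bounded.

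The only subtle point, and the main thing to verify carefully, is the equivalence between maximizing $|R|$ over $\mathcal{M}_{1}^{\ast}$ and minimizing $|S|$ over input sets that guarantee controllability; once that is in hand (which follows directly from $|S|+|R|=n$ together with Lemma \ref{lemma:s_zero_matroid} and Lemma \ref{lemma:strongly_conn_condition}), the optimality of the greedy algorithm is a direct invocation of the Rado--Edmonds theorem and the complexity is immediate from the single pass over $V$.
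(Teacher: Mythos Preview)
Your proposal is correct. The reduction to (\ref{eq:min_input_connected}), the invocation of the Rado--Edmonds theorem to conclude that greedy returns a basis of $\mathcal{M}_{1}^{\ast}$, and the $O(n)$ oracle count are all valid.

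The paper takes a different, more self-contained route: rather than citing Rado--Edmonds, it introduces a lexicographic ordering $\prec$ on the bases of $\mathcal{M}_{1}^{\ast}$ and proves by induction on the scan index $i$ that the greedy algorithm outputs the $\prec$-minimal basis $R^{\ast}$. The inductive step uses the matroid exchange property directly (if greedy could add $i$ but $i\notin R^{\ast}$, extend to a basis and obtain a contradiction to minimality of $R^{\ast}$). Your approach is shorter and more conceptual, treating the optimality of greedy on a matroid as a known black box; the paper's argument avoids external citations and, as a byproduct, pins down \emph{which} basis the algorithm returns, though that extra information is not used later. Either argument suffices for the lemma.
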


\begin{proof}
Let $\mathcal{B}$ be the set of bases of $\mathcal{M}_{1}^{\ast}$.  We  define a lexicographic ordering on the sets $R \in \mathcal{B}$ as follows.  Let $R_{1}$, $R_{2} \in \mathcal{B}$, and write $R_{1} = \{a_{1},\ldots,a_{m}\}$ and $R_{2} = \{b_{1},\ldots,b_{m}\}$, where $a_{1} < a_{2} < \cdots < a_{m}$ and $b_{1} < b_{2} < \cdots < b_{m}$.  We let $R_{1} \prec R_{2}$ if there exists $i$ such that $a_{j} = b_{j}$ for $j < i$ and $a_{i} < b_{i}$.  We have that $\prec$ induces a total ordering on $\mathcal{B}$, since for any $R_{1}$ and $R_{2}$ with $R_{1} \neq R_{2}$, we have $R_{1} \prec R_{2}$ or $R_{2} \prec R_{1}$.

Let $R^{\ast}$ denote the set in $\mathcal{B}$ that is minimal under the ordering $\prec$.  We show that the algorithm described above outputs $R^{\ast}$.  Let $R^{\ast} = \{a_{1},\ldots,a_{m}\}$, and let $R_{i}^{\ast} = R^{\ast} \cap \{1,\ldots,i\}$.  Finally, let $R_{i}$ denote the set computed by our algorithm at iteration $i$.  We prove by induction that $R_{i}^{\ast} = R_{i}$ for each $i$.

We have $R_{0} = R_{0}^{\ast}$ trivially.  Now, suppose $R_{i} = R_{i-1}^{\ast}$.  We have two cases.  First, suppose that $i \in R^{\ast}$.  Since $R_{i-1} \cup \{i\} = R_{i-1}^{\ast} \cup \{i\} = R_{i}^{\ast}$ and $R_{i}^{\ast} \subseteq R^{\ast}$, we have that $(R_{i-1} \cup \{i\}) \in \mathcal{M}_{1}^{\ast}$.  Hence the algorithm will add $i$ to the set, and $R_{i} = R_{i}^{\ast}$.

Now, suppose that $i \notin R^{\ast}$, and suppose that $R_{i}^{\ast} \neq R_{i}$.  By inductive hypothesis, we must have that $i \in R_{i}$, which occurs if and only if $R_{i} = (R_{i-1} \cup \{i\})$ is independent in $\mathcal{M}_{1}^{\ast}$.  Since $\mathcal{M}_{1}^{\ast}$ is a matroid, we can complete $R_{i}$ to a basis $\hat{R} \in \mathcal{B}$.  By definition of the $\prec$ ordering, $\hat{R} \prec R^{\ast}$, contradicting the assumption that $R^{\ast}$ is minimal under the ordering $\prec$.  This contradiction implies that $i \notin R_{i}$, and so $R_{i} = R_{i}^{\ast}$.

Continuing inductively until $i=n$, we have that $R_{n} = R_{n}^{\ast} = R^{\ast}$.  Since $R_{n}$ is equal to $R^{\ast} \in \mathcal{B}$, $R_{n}$ is a basis of $\mathcal{M}_{1}^{\ast}$, and hence is a solution to (\ref{eq:min_input_connected}).  The $O(n)$ runtime follows from the fact that the algorithm makes one independence check per iteration over a total of $n$ iterations.
\end{proof}


The additional problem structure in the strongly connected case leads to a simplified algorithm with reduced runtime compared to Algorithm \ref{algo:min_set}.


\subsection{Joint Performance and Controllability Input Selection in Strongly Connected Networks}
\label{subsec:Joint_PC_strong_conn}
When the system graph is strongly connected and the conditions of Lemma \ref{lemma:strongly_conn_condition} hold, the complexity and optimality bounds of input selection for joint performance and controllability are improved.  With this additional structure, the problem formulation is given by
\begin{equation}
\label{eq:joint_PC_strong_conn}
\begin{array}{ll}
\mbox{maximize} & f(S) \\
\mbox{s.t.} & |S| \leq k \\
 & (V \setminus S) \in \mathcal{M}_{1}^{\ast}
 \end{array}
 \end{equation}
 The following lemma gives an equivalent formulation to (\ref{eq:joint_PC_strong_conn}).
 \begin{lemma}
 \label{lemma:PC_strong_conn_equiv}
 Let $r_{1} = \mbox{rank}(\mathcal{M}_{1})$ and define $\hat{\mathcal{M}}_{1} = \mathcal{M}_{1} \vee U_{k-r_{1}}$.  If the objective function $f(S)$ is monotone, then the optimization problem (\ref{eq:joint_PC_strong_conn}) has the same solution as
 \begin{equation}
 \label{eq:joint_PC_SC_equiv}
 \begin{array}{cc}
 \mbox{maximize} & f(S) \\
 \mbox{s.t.} & S \in \hat{\mathcal{M}}_{1}
 \end{array}
 \end{equation}
 \end{lemma}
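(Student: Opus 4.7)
The plan is to mirror the proof of Lemma \ref{lemma:joint_perf_controll_equiv}, specialized to the single-matroid case. The key observations are that (i) monotonicity of $f$ pushes the optimum of either problem to a maximum-cardinality set, and (ii) the maximum-cardinality feasible sets of the two problems coincide.

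First I would argue that the optimum of (\ref{eq:joint_PC_strong_conn}) can be taken with $|S| = k$. If $|S| < k$ and $(V \setminus S) \in \mathcal{M}_{1}^{\ast}$, then for any $v \in V \setminus S$ the set $V \setminus (S \cup \{v\})$ is a subset of the independent set $V \setminus S$, hence also independent in $\mathcal{M}_{1}^{\ast}$. Monotonicity of $f$ then gives $f(S \cup \{v\}) \geq f(S)$, so we may always grow $S$ up to size $k$ without loss. An analogous argument, using that any independent set of $\hat{\mathcal{M}}_{1}$ extends to a basis, shows that the optimum of (\ref{eq:joint_PC_SC_equiv}) is also attained at a set of size $k$, i.e., at a basis of $\hat{\mathcal{M}}_{1}$ (whose rank is $k$, since a basis of $\mathcal{M}_{1}$ of size $r_{1}$ together with any $k - r_{1}$ outside elements is independent in $\hat{\mathcal{M}}_{1}$, assuming $n \geq k$).

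Next I would establish a bijection between feasible sets of (\ref{eq:joint_PC_strong_conn}) of size $k$ and bases of $\hat{\mathcal{M}}_{1}$. For the forward direction, suppose $|S| = k$ and $(V \setminus S) \in \mathcal{M}_{1}^{\ast}$. By definition of the dual, $V \setminus S$ can be completed to a basis of the form $V \setminus B$ with $B$ a basis of $\mathcal{M}_{1}$, so $B \subseteq S$ and $|B| = r_{1}$. Writing $S = B \cup (S \setminus B)$ with $|S \setminus B| = k - r_{1}$ exhibits $S$ as the union of an independent set of $\mathcal{M}_{1}$ and a set in $U_{k-r_{1}}$, i.e., $S \in \hat{\mathcal{M}}_{1}$; since $|S| = k$ equals the rank of $\hat{\mathcal{M}}_{1}$, $S$ is a basis. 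Conversely, if $S$ is a basis of $\hat{\mathcal{M}}_{1}$ then $S = S_{1} \cup S_{2}$ with $S_{1} \in \mathcal{M}_{1}$ and $|S_{2}| \leq k - r_{1}$; combining $|S_{1}| + |S_{2}| = k$ with $|S_{1}| \leq r_{1}$ forces $|S_{1}| = r_{1}$, so $S_{1}$ is a basis of $\mathcal{M}_{1}$ and $V \setminus S_{1}$ is a basis of $\mathcal{M}_{1}^{\ast}$. Then $V \setminus S \subseteq V \setminus S_{1}$ is independent in $\mathcal{M}_{1}^{\ast}$, establishing feasibility in (\ref{eq:joint_PC_strong_conn}).

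Combining these two pieces, the optimal values of (\ref{eq:joint_PC_strong_conn}) and (\ref{eq:joint_PC_SC_equiv}) coincide and are attained on the same sets, which proves the claim. The argument is essentially routine given Lemma \ref{lemma:joint_perf_controll_equiv}; the only substantive check is that dropping the second matroid constraint preserves the clean bijection between feasible sets of maximum size and bases of the augmented matroid, and this follows directly from the definitions of dual matroid and matroid union.
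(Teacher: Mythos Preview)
Your proposal is correct and follows essentially the same approach as the paper's proof: use monotonicity to reduce both problems to sets of size $k$, then show that the size-$k$ feasible sets of (\ref{eq:joint_PC_strong_conn}) coincide with the bases of $\hat{\mathcal{M}}_{1}$ by completing $V\setminus S$ to a dual basis in one direction and decomposing $S = S_{1}\cup S_{2}$ in the other. Your write-up is in fact slightly more careful than the paper's in forcing $|S_{1}| = r_{1}$ in the backward direction.
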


 \begin{proof}
 Let $S^{\ast}$ and $\hat{S}$ denote the optimal solutions to (\ref{eq:joint_PC_strong_conn}) and (\ref{eq:joint_PC_SC_equiv}), respectively.  We observe that both $|S^{\ast}| = |\hat{S}| = k$ by monotonicity of $f(S)$.  We show that if $|S| = k$, then the conditions $(V \setminus S) \in \mathcal{M}_{1}^{\ast}$ and $S \in \hat{\mathcal{M}}_{1}$ are equivalent.  First, suppose that $(V \setminus S) \in \mathcal{M}_{1}^{\ast}$.  Let $R^{\ast} = (V \setminus S) \in \mathcal{M}_{1}^{\ast}$.  The set $R^{\ast}$ can be completed to a basis of $\mathcal{M}_{1}^{\ast}$, denoted $\hat{R} = R^{\ast} \cup R^{\prime}$, and so we have $S = (V \setminus \hat{R}) \cup (V \setminus R^{\prime})$.  Now, $(V \setminus \hat{R}) \in \mathcal{M}_{1}$ and $|V \setminus R^{\prime}| = k-r_{1}$, and so $S \in \hat{\mathcal{M}}_{1}$.

 Suppose that $S \in \hat{\mathcal{M}}_{1}$ and $|S| = k$.  Since $|S| = k$, $S$ is a basis of $\hat{\mathcal{M}}_{1}$, and so $S$ can be written as $S = S_{1} \cup S_{2}$ where $S_{1} \in \mathcal{M}_{1}$.  Hence $(V \setminus S) \subseteq (V \setminus S_{1}) \in \mathcal{M}_{1}^{\ast}$, and so the second constraint of (\ref{eq:joint_PC_strong_conn}) is satisfied.
  \end{proof}


Convex relaxation approaches have been proposed for solving matroid-constrained monotone submodular maximization problems~\cite{calinescu2011maximizing,chekuri2010dependent}. One such approach is to replace Line 12 in Algorithm \ref{algo:Joint_PC} with a subroutine that computes the maximum-weighted basis of a matroid (such a basis can be computed efficiently using a greedy algorithm).  It was shown in \cite{calinescu2011maximizing} that this algorithm achieves a $(1-1/e)$ optimality bound.   


\subsection{Performance-Controllability Trade-Off in Strongly Connected Networks}
\label{subsec:SC_GCI}
 In the strongly connected network case, we define the graph controllability index (GCI) $$c(S) = \max{\{|V^{\prime}| : V^{\prime} \mbox{ controllable from $S$}\}}.$$  The following lemma provides additional structure on $c(S)$.

 \begin{lemma}
 \label{lemma:GCI_matroid_rank}
 The function $c(S)= \tilde{c}(S) + \zeta$, where $\zeta$ is a constant and $\tilde{c}(S)$ is a matroid rank function.
 \end{lemma}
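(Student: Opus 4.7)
The plan is to reduce $c(S)$ to a quantity that was already analyzed in the proof of Theorem \ref{theorem:GCI_submodular}, and then rewrite it in terms of a dual-matroid rank function. In the strongly connected case, by Lemma \ref{lemma:strongly_conn_condition}, controllability of an induced subsystem from the input set $S$ is, with probability one over the free parameters, equivalent to the rank condition $\mathrm{rank}([A(V') \mid B(V')]) = |V'|$. Thus $c(S)$ coincides (generically) with the analog of $c_1(S)$ defined by \eqref{eq:GCI1}, restricted to the strongly connected setting, so it suffices to identify $c(S)$ with $c_1(S)$.

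Having made this reduction, I would reuse the first part of the proof of Theorem \ref{theorem:GCI_submodular}: every node in $S$ is trivially controllable (via $x_i = u_i$), and the maximum number of non-input nodes whose zero modes can be controlled jointly with the nodes in $S$ is exactly the matroid rank $\rho_1(V \setminus S)$, where $\rho_1$ is the rank function from Lemma \ref{lemma:matroid_non_input}. This gives the decomposition
\begin{equation*}
c(S) = \rho_1(V \setminus S) + |S|.
\end{equation*}

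To rewrite the right-hand side as a matroid rank function plus a constant, I would invoke the dual rank formula stated after Definition \ref{def:matroid_dual}: if $\rho_1^\ast$ denotes the rank function of $\mathcal{M}_1^\ast$, then
\begin{equation*}
\rho_1^\ast(S) \;=\; \rho_1(V \setminus S) + |S| - \rho_1(V).
\end{equation*}
Setting $\tilde{c}(S) := \rho_1^\ast(S)$ and $\zeta := \rho_1(V)$ yields $c(S) = \tilde{c}(S) + \zeta$, where $\tilde{c}$ is a matroid rank function (the rank function of $\mathcal{M}_1^\ast$) and $\zeta$ is a constant independent of $S$.

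The main obstacle I anticipate is justifying the first step cleanly, namely that in the strongly connected setting the ``largest controllable subgraph'' metric $c(S)$ really does collapse to the single rank-based criterion $c_1(S)$. This needs care because an induced subsystem on $V'$ need not itself be strongly connected, so Lemma \ref{lemma:strongly_conn_condition} does not literally apply to the sub-system. I would handle this by observing that the sub-system lives inside a strongly connected ambient graph and arguing, along the lines of the proof of Lemma \ref{lemma:strongly_conn_condition}, that the cycle-cancellation obstruction to $\mathrm{rank}(zF-A \mid B) = n$ is a measure-zero event in the free parameters once the rank condition on $[A(V') \mid B(V')]$ holds. Once this generic equivalence is established, the remainder of the argument is just the matroid-dual bookkeeping sketched above.
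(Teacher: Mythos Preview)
Your decomposition is different from the paper's, and the key step does not hold. The paper argues directly that the size of the largest controllable subgraph equals (up to an additive constant) $\operatorname{rank}\bigl(\mathcal{M}([I\,|\,Q_A\,|\,0])\vee\mathcal{M}([I\,|\,T_A\,|\,T_B(S)])\bigr)$, and then simply reads off from the definition in Lemma~\ref{lemma:matroid_non_input} that this rank is $\rho_1(S)+\zeta$. Thus the paper takes $\tilde c=\rho_1$, the \emph{primal} rank function of $\mathcal{M}_1$, and never passes through the formula $c_1(S)=\rho_1(V\setminus S)+|S|$ or through the dual matroid at all.

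That formula, which you borrow from the proof of Theorem~\ref{theorem:GCI_submodular}, is where your argument breaks. Recall that $\rho_1$ is defined with its argument sitting inside $T_B(\cdot)$, i.e., the argument plays the role of the \emph{input} set; hence $\rho_1(V\setminus S)$ measures what happens when $V\setminus S$ is taken as the input set, which is not ``the number of non-input nodes whose zero modes are controllable from $S$.'' A concrete failure: take $n=2$, $T_A=0$, $Q_A=\bigl(\begin{smallmatrix}0&1\\1&0\end{smallmatrix}\bigr)$ (strongly connected), and $S=\{1\}$. Here $\operatorname{rank}(A)=2$, so the full system satisfies $\operatorname{rank}(A\,|\,B)=2$ and is controllable from $\{1\}$, giving $c(\{1\})=2$. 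On the other hand one checks $\zeta=2n$, so $\rho_1\equiv 0$, and your expression yields $\rho_1(V\setminus S)+|S|=0+1=1$ and $\rho_1^{\ast}(S)+\rho_1(V)=|S|+0=1$. So your detour through duality produces the wrong candidate $\tilde c=\rho_1^{\ast}$. The obstacle you flag about induced subsystems not being strongly connected is a side issue; the substantive repair is to drop the borrowed identity and identify $c(S)$ with $\rho_1(S)$ plus a constant directly, as the paper does via Lemma~\ref{lemma:matroid_non_input}.
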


 \begin{proof}
 The largest controllable subgraph of $G$ corresponds to a subset of states such that, for the matrix $A^{\prime}$ with columns indexed in $V^{\prime}$, we have $\mbox{rank}(\mathcal{M}([I|Q_{A^{\prime}}|0] \vee [I|T_{A^{\prime}}|T_{B}(S)]) = 2|V^{\prime}|$.  This subset of columns, however, is exactly the maximum-size independent set in $(\mathcal{M}([I|Q_{A}|0]) \vee \mathcal{M}([I|T_{A}|T_{B}(S)]))$, and the value of $c(S)$ is $\mbox{rank}(\mathcal{M}([I|Q_{A}|0]) \vee \mathcal{M}([I|T_{A}|T_{B}(S)]))$.  By Lemma \ref{lemma:matroid_non_input}, the rank is equal to the rank of $\mathcal{M}([I|Q_{A}]) \vee \mathcal{M}([I|T_{A}])$ plus a matroid rank function of $S$.
 \end{proof}

 The problem of selecting a set of up to $k$ input nodes to maximize both a performance metric $f(S)$ and the GCI $c(S)$ is formulated as
 \begin{equation}
 \label{eq:SC_GCI_form}
 \begin{array}{ll}
 \mbox{maximize} & f(S) + \eta c(S) \\
 \mbox{s.t.} & |S| \leq k
 \end{array}
 \end{equation}
 As in the general case, a greedy algorithm for maximizing $f(S) + \eta c(S)$ is guaranteed to return an input set $S^{\ast}$ such that $f(S^{\ast}) + \eta c(S^{\ast})$ is within a $(1-1/e)$ factor of the optimum.  Moreover, when the performance metric $f(S)$ is identically zero, so that only controllability is optimized, we have the following result.
 \begin{lemma}
 \label{lemma:SC_GCI_opt}
 If $f(S) = 0$, then the greedy algorithm returns the optimal solution to (\ref{eq:SC_GCI_form}).
 \end{lemma}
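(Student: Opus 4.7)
The plan is to reduce the problem to maximizing a matroid rank function subject to a cardinality constraint, and then to invoke the classical fact that the greedy algorithm is exactly optimal in that setting.  First, when $f(S)=0$, the objective of (\ref{eq:SC_GCI_form}) is $\eta c(S)$, and by Lemma \ref{lemma:GCI_matroid_rank} we have $c(S) = \tilde{c}(S) + \zeta$ with $\zeta$ a constant and $\tilde{c}$ the rank function of some matroid $\mathcal{M}$.  Since $\eta \geq 0$ and $\zeta$ is constant, solving (\ref{eq:SC_GCI_form}) in this case is the same as solving $\max\{\tilde{c}(S) : |S| \leq k\}$.

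Next I would characterize the optimum.  Let $r$ denote the rank of $\mathcal{M}$.  For any $S$ with $|S| \leq k$, monotonicity of $\tilde{c}$ and the rank bound give $\tilde{c}(S) \leq \min(|S|,r) \leq \min(k,r)$.  This upper bound is achieved: if $k \leq r$, any independent set of size $k$ works; if $k > r$, any basis of $\mathcal{M}$ works.  Hence the optimal value is exactly $\min(k,r)$.

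I would then show that the greedy algorithm attains this bound.  Because $\tilde{c}$ is a matroid rank function, every marginal gain $\tilde{c}(S \cup \{v\}) - \tilde{c}(S)$ lies in $\{0,1\}$, and equals $1$ precisely when $S \cup \{v\}$ is independent in $\mathcal{M}$.  I would prove by induction on the iteration index $i$ that, as long as $i \leq r$, the greedy set $S_i$ produced after $i$ steps is independent in $\mathcal{M}$ with $\tilde{c}(S_i) = i$.  The base case $S_0 = \emptyset$ is immediate.  For the inductive step, suppose $|S_{i-1}| = i-1 < r$ and $S_{i-1}$ is independent.  By the matroid augmentation axiom applied to $S_{i-1}$ and any basis of $\mathcal{M}$, there exists $v \notin S_{i-1}$ with $S_{i-1} \cup \{v\}$ independent, so the maximum marginal gain is $1$ and greedy selects such an element, yielding $\tilde{c}(S_i) = i$.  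Therefore after $\min(k,r)$ iterations greedy has built an independent set achieving $\tilde{c}(S) = \min(k,r)$, and any further elements (if $k > r$) leave the value unchanged.  Combining with the upper bound from the previous paragraph shows that greedy is optimal, completing the argument.

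The only substantive step is the inductive claim that greedy preserves independence in $\mathcal{M}$; this rests entirely on the $\{0,1\}$-valued marginal structure and the exchange axiom, so no genuine obstacle is anticipated once Lemma \ref{lemma:GCI_matroid_rank} is in hand.
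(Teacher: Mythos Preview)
Your proposal is correct and follows essentially the same route as the paper: reduce (\ref{eq:SC_GCI_form}) with $f\equiv 0$ via Lemma~\ref{lemma:GCI_matroid_rank} to maximizing a matroid rank function under a cardinality constraint, then use that greedy is exact for this problem. The paper's proof simply cites this last fact from \cite{oxley1992matroid}, whereas you supply a self-contained inductive argument using the $\{0,1\}$ marginals and the exchange axiom; this is extra detail rather than a different method.
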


 \begin{proof}
 For the problem of maximizing a matroid rank function subject to a cardinality constraint, the greedy algorithm is known to return an optimal solution \cite{oxley1992matroid}.  If $f(S) = 0$, then by Lemma \ref{lemma:GCI_matroid_rank}, Eq. (\ref{eq:SC_GCI_form}) is equivalent to maximizing a matroid rank function subject to a cardinality constraint, and hence the greedy algorithm returns the optimal input set $S$.   
 \end{proof} 
\section{Special Cases of Our Approach}
\label{sec:cases}
In this section, we consider three special cases of our framework, namely systems with linear consensus dynamics, networked systems where each node has second integrator dynamics, and systems where all parameters are free.

\subsection{Linear Consensus Dynamics}
\label{subsec:consensus}
We first consider a network of $N$ nodes where each node $i \in \{1,\ldots,N\}$ has a state $x_{i}(t) \in \mathbb{R}$.  The state dynamics of the non-input nodes are given by $\dot{x}_{i}(t) = -\sum_{j \in N(i)}{W_{ij}(x_{i}(t)-x_{j}(t))}$, where $W_{ij}$ are nonnegative weights.  In \cite{goldin2013weight}, it was shown that, by introducing a set of states $\{x_{j}^{e} : j =1,\ldots,M\}$, where $M$ is the number of edges in the network, the system can be written in the form (\ref{eq:LDS}) as
\begin{equation}
\label{eq:consensus_LDS}
\left(
\begin{array}{cc}
I & 0 \\
0 & 0
\end{array}
\right)\left(
\begin{array}{c}
\dot{\mathbf{x}}(t) \\
\dot{\mathbf{x}}^{e}(t)
\end{array}
\right) = \left(
\begin{array}{cc}
0 & K \\
K_{I} & W
\end{array}
\right)\left(
\begin{array}{c}
\mathbf{x}(t) \\
\mathbf{x}^{e}(t)
\end{array}
\right).
\end{equation}
 In (\ref{eq:consensus_LDS}), $K_{I}$ is the incidence matrix of the graph and $K$ is the transpose of the incidence matrix. $W$ is a diagonal matrix with $e$-th entry equal to the weight on edge $e$. We assume that the weights $W$ are free parameters, so that $Q_{F}$, $T_{F}$, $Q_{A}$, and $T_{A}$ are given by
 \begin{equation}
 Q_{F} = \left(
 \begin{array}{cc}
 I & 0 \\
 0 & 0
 \end{array}
 \right), \quad Q_{A} = \left(
 \begin{array}{cc}
 0 & K \\
 K_{I} & 0
 \end{array}
 \right), \quad T_{A} = \left(
 \begin{array}{cc}
 0 & 0 \\
 0 & W
 \end{array}
 \right).
 \end{equation}
 As a first step towards analyzing this class of system dynamics under our framework, we consider the condition of Lemma \ref{lemma:connectivity_sufficient_general}.  For this system, the matrix $\Omega$ of Section \ref{subsec:aux_graph} is equal to
 \begin{equation}
 \label{eq:consensus_Omega}
 \Omega =
 \begin{array}{c}
 \mathbf{w} \\
 \mathbf{w}^{e} \\
 \mathbf{x} \\
 \mathbf{x}^{e}\\
 \mathbf{u}
 \end{array}
 \left(
 \begin{array}{ccc}
 -I & K & 0 \\
 K_{I} & 0 & 0 \\
 I & 0 & 0 \\
 0 & I & 0 \\
 0 & 0 & I
 \end{array}
 \right)
 \end{equation}
 We use $\mathbf{w}^{e}$ and $\mathbf{x}^{e}$ to denote the auxiliary graph nodes corresponding to the states $\mathbf{x}^{e}(t)$.  We observe that in the augmented graph, since the weight matrix is diagonal, there is a directed edge from $w_{j}^{e,T}$ to $x_{j}^{e,T}$ for all edges indexed $j = 1,\ldots,M$.  We have the following intermediate result.
 \begin{lemma}
 \label{lemma:consensus_1}
 The matching $m$ with $m(w_{j}^{e,T}) = x_{j}^{e,T}$ for all $j=1,\ldots,M$ and $m(w_{i}^{T}) = w_{i}^{Q}$  for all $i=1,\ldots,N$ satisfies the conditions of Lemma \ref{lemma:bipartite_property}.
 \end{lemma}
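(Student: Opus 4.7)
The plan is to verify directly the three requirements of Lemma \ref{lemma:bipartite_property} for the map $m$ described: (i) every pair used by $m$ lies in $E_{H}$, (ii) $m$ is a perfect matching of the left-side vertices $\{w_{i}^{T}\} \cup \{w_{j}^{e,T}\}$, and (iii) the rows of $\Omega$ indexed by the associated set $J$ are linearly independent in $\Omega$.

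First, I would check membership in $E_{H}$. The pairs $(w_{i}^{T},w_{i}^{Q})$ belong to $E_{H}$ by construction of the bipartite graph in Section \ref{subsec:aux_graph}. For the pairs $(w_{j}^{e,T}, x_{j}^{e,Q})$, the defining condition of $E_{H}$ requires $(j,j) \in N(T_{A}) \cup N(T_{F})$, where the indices are interpreted within the $\mathbf{x}^{e}$-block. Since $T_{F} = 0$ for the consensus system and $T_{A}$ has the diagonal matrix $W$ of free parameters on its $\mathbf{x}^{e}$ block, every such diagonal entry $(T_{A})_{jj}$ is nonzero, so each required edge is in $E_{H}$. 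Perfectness is then immediate: each of the $N+M$ left vertices is hit in exactly one edge of $m$, and the right endpoints $w_{i}^{Q}$ and $x_{j}^{e,Q}$ are all distinct.

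For linear independence, I would compute the rows of $\Omega$ indexed by $J = \{w_{1},\ldots,w_{N}\} \cup \{x_{1}^{e},\ldots,x_{M}^{e}\}$. Reading directly from (\ref{eq:consensus_Omega}), the $w$-rows contribute the block $[-I_{N} \ K \ 0]$ and the $x^{e}$-rows contribute the block $[0 \ I_{M} \ 0]$, giving the $(N+M)\times(N+M+k)$ matrix
\begin{equation*}
\Omega_{J} = \left(\begin{array}{ccc} -I_{N} & K & 0 \\ 0 & I_{M} & 0 \end{array}\right).
\end{equation*}
The leftmost $(N+M)\times(N+M)$ square is block upper-triangular with $-I_{N}$ and $I_{M}$ on the diagonal, hence invertible, so the rows of $\Omega_{J}$ are linearly independent.

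I do not anticipate a serious obstacle. The only delicate point is keeping the three parallel indexings consistent --- system states, the $\mathbf{x}^{e}$-block labels used by $W$, and the row ordering of $\Omega$ --- so that the nonzero pattern of $T_{A}$ is correctly identified with the required $H$-edges. Once the submatrix $\Omega_{J}$ is written in block form, linear independence follows from elementary block-triangular reasoning.
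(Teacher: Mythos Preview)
Your proposal is correct and follows essentially the same approach as the paper: you identify the same index set $J$, write down the same block-triangular submatrix $\Omega_{J}$ (the paper simply omits the trailing zero block column), and deduce full rank from the diagonal blocks. You are slightly more explicit than the paper in checking that the matching edges actually lie in $E_{H}$ via the diagonal structure of $T_{A}=\mathrm{diag}(0,W)$, which the paper asserts in one line.
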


 \begin{proof}
 The matching $m$ is valid under the construction of Section \ref{subsec:aux_graph}.  The rows of $\Omega$ from (\ref{eq:consensus_Omega}) indexed in $J = \{x_{j}^{e,Q}: j=1,\ldots,M\} \cup \{w_{i}^{Q} : i=1,\ldots,N\}$ form the matrix
 $$\Omega_{J} = \left(
 \begin{array}{cc}
 -I & K \\
 0 & I
 \end{array}
 \right),$$
 which has full rank.
 \end{proof}

 We can then compute $\Omega\Omega_{J \cup J_{1}}^{-1}$ as
 \begin{equation}
 \label{eq:consensus_Omega_J_J1}
 \Omega\Omega_{J \cup J_{1}}^{-1} = \left(
 \begin{array}{ccc}
 I & 0 & 0 \\
 -K_{I} & K_{I}K & 0 \\
 -I & K & 0 \\
 0 & I & 0 \\
 0 & 0 & I
 \end{array}
 \right)
 \end{equation}

 The graph $G^{\prime} = (V^{\prime}, E^{\prime})$ of Lemma \ref{lemma:equivalent_graph} defined by Eq. (\ref{eq:consensus_Omega_J_J1}) is described by the following lemma.
 \begin{lemma}
 \label{lemma:consensus_aux_graph}
 For the system (\ref{eq:consensus_LDS}), the edge set $E^{\prime}$ of Lemma \ref{lemma:equivalent_graph} is defined by
 \begin{eqnarray*}
 E^{\prime} &=& \{(w_{e}^{Q}, w_{i}^{Q}) : e = (i,j) \mbox{ for some $j \in V$}\} \cup \{(x_{i}^{Q}, x_{e}^{Q}) : e = (i,j) \mbox{for some $j \in V$}\} \\
 &&\cup \{(w_{e}^{Q}, x_{e^{\prime}}^{Q}) : \mbox{edges $e$ and $e^{\prime}$ have a node in common}\} \\
 && \cup \{(x_{j}^{e,Q}, x_{j}^{e,T}) : j=1,\ldots,M\} \cup \{(w_{j}^{e,T}, w_{j}^{e,Q}) : j=1,\ldots,M\} \\
 && \cup \{(w_{i}^{Q}, w_{i}^{T}) : i=1,\ldots,N\}.
 \end{eqnarray*}
 \end{lemma}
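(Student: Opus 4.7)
The plan is to directly apply the auxiliary graph construction of Section \ref{subsec:aux_graph} to the consensus system (\ref{eq:consensus_LDS}) with empty input set. Since Lemma \ref{lemma:consensus_1} has already exhibited a valid matching $m$ on the bipartite graph $H$, and equation (\ref{eq:consensus_Omega_J_J1}) already gives $\tilde{\Omega} = \Omega \Omega_{J \cup J_{1}}^{-1}$ when $S = \emptyset$, my task reduces to enumerating which edges the construction rules of Section \ref{subsec:aux_graph} insert into $\hat{E}$ and verifying that the result coincides with the claimed $E'$.

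I would first process the matching-related edges. The rule yielding $(w_{i}^{Q}, w_{i}^{T})$ fires precisely at the self-matched pairs $m(w_{i}^{T}) = w_{i}^{Q}$, which under $m$ are exactly $i = 1, \ldots, N$; this produces the last set in $E'$. The rule yielding $(w_{i}^{T}, w_{i}^{Q})$ fires at the non-self-matched rows, which are precisely the $w_{j}^{e,T}$ (since $m(w_{j}^{e,T}) = x_{j}^{e,Q}$), giving $(w_{j}^{e,T}, w_{j}^{e,Q})$ for $j = 1, \ldots, M$. Finally, the rule generating $(x_{j}^{Q}, w_{i}^{T})$ under $(i,j) \in N(T_{A}) \cup N(T_{F})$ with $m(w_{i}^{T}) = x_{j}^{Q}$ fires exactly on the diagonal support of $W$, since $N(T_{F}) = \emptyset$ and $N(T_{A})$ consists precisely of the self-loops indexed by edges; this yields the matching-reversed edges at positions $(x_{j}^{e,Q}, w_{j}^{e,T})$ recorded in the lemma.

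Next I would process the $Q$-$Q$ edges derived from $\tilde{\Omega}$. Because $J_{1} = \emptyset$ in the no-input case, the side condition $\tilde{\Omega}_{xz} = 0$ for $z \in J_{1}$ is vacuous, so every nonzero $\tilde{\Omega}_{xy}$ with $x \in \hat{V} \setminus J$ and $y \in J$ contributes an edge $(x^{Q}, y^{Q})$. From (\ref{eq:consensus_Omega_J_J1}) the row blocks outside $J$ are those indexed by $\mathbf{w}^{e}$ and $\mathbf{x}$. The $\mathbf{w}^{e}$ block contributes $(-K_{I},\, K_{I}K)$: the $(j,i)$ entry of $-K_{I}$ is nonzero iff edge $e = j$ is incident to node $i$, producing the edges $(w_{e}^{Q}, w_{i}^{Q})$, while $(K_{I}K)_{jj'} = (K_{I} K_{I}^{T})_{jj'}$ is nonzero iff edges $j$ and $j'$ share a node, producing $(w_{e}^{Q}, x_{e'}^{Q})$. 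The $\mathbf{x}$ block contributes $(-I,\, K)$: the off-block part $K = K_{I}^{T}$ yields $(x_{i}^{Q}, x_{e}^{Q})$ when edge $e$ is incident to $i$, accounting for the second set in $E'$.

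I do not anticipate a serious obstacle here; the argument is a direct, essentially mechanical translation of the support pattern of $\tilde{\Omega}$ and the matching structure into graph edges, with the work having already been concentrated in Lemmas \ref{lemma:equivalent_graph} and \ref{lemma:consensus_1} together with the explicit inverse computation in (\ref{eq:consensus_Omega_J_J1}). The only care required is in the bookkeeping: each row index $v$ of $\Omega$ corresponds to two vertices $v^{T}, v^{Q}$ in $\hat{V}$, the $\tilde{\Omega}$-derived edges involve only the $Q$ copies, and the matching-related edges couple $T$ and $Q$ copies according to whether the bipartite matching pairs a row to its own $Q$ vertex or to an $x^{Q}$ vertex of a different index.
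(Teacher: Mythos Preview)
Your approach is correct and essentially identical to the paper's: both proceed by directly applying each edge-generation rule of Section~\ref{subsec:aux_graph} to the consensus system under the matching of Lemma~\ref{lemma:consensus_1} and the explicit $\tilde{\Omega}$ of (\ref{eq:consensus_Omega_J_J1}), then reading off the incidence-matrix and edge-Laplacian structure of the off-diagonal blocks of $\tilde{\Omega}$. One minor bookkeeping note: the matching-reversed rule indeed yields $(x_{j}^{e,Q}, w_{j}^{e,T})$ as you correctly derive, whereas the lemma statement prints $(x_{j}^{e,Q}, x_{j}^{e,T})$---this appears to be a typo in the paper rather than a flaw in your argument.
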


 \begin{proof}
 Considering the edge set of the auxiliary graph defined in Section \ref{subsec:aux_graph}, the set $\{(w_{i}^{T}, x_{j}^{Q}) : (i,j) \in N(T_{A}) \cup N(T_{F}), m(w_{i}^{T}) \neq x_{j}^{Q}\}$ is empty, while the set $\{(x_{j}^{Q}, w_{i}^{T}) : (i,j) \in N(T_{A}) \cup N(T_{F}), m(w_{i}^{T}) = x_{j}^{Q}\}$ is equal to $\{(x_{j}^{e,Q}, x_{j}^{e,T}) : j=1,\ldots,M\} \cup \{(w_{j}^{e,T}, w_{j}^{e,Q}) : j=1,\ldots,M\}$.  The set $\{(w_{i}^{T}, w_{i}^{Q}) : m(w_{i}^{T}) \neq w_{i}^{Q}\} \cup \{(w_{i}^{Q}, w_{i}^{T}) : m(w_{i}^{T}) = m(w_{i}^{Q})\}$ is equal to $\{(w_{j}^{e,T}, w_{j}^{e,Q}) : j=1,\ldots,M\} \cup \{(w_{i}^{Q}, w_{i}^{T}) : i=1,\ldots,N\}$.

 It remains to compute the set $\{(x^{Q},y^{Q}) : x \in \hat{V} \setminus J, y \in J, \tilde{\Omega}_{xy} \neq 0, \tilde{\Omega}_{xz} = 0 \ \forall z \in J_{1}\}$.  This set is defined by the off-diagonal entries of $\Omega\Omega_{J \cup J_{1}}^{-1}$ from (\ref{eq:consensus_Omega_J_J1}).  The entries from $w_{j}^{e,Q}$ to $w_{i}^{Q}$ correspond to the entries of the incidence matrix, and hence there is a nonzero entry if and only if edge $j$ is incident to node $i$.  A similar argument holds for the $(x_{i}^{Q}, x_{j}^{e,Q})$ edges.  Finally, an edge $(w_{j}^{e,Q}, w_{j^{\prime}}^{e,Q})$ is formed if $(K_{I}K)_{jj^{\prime}} \neq 0$.  This matrix is the \emph{edge Laplacian}, which has a nonzero entry if and only if either $j=j^{\prime}$, or edges $j$ and $j^{\prime}$ are incident to the same node.
 \end{proof}

 This description of the graph $G^{\prime}$ enables characterization of the input-connected nodes in $\hat{V}$.
 \begin{lemma}
 \label{lemma:consensus_input_connected}
  The nodes $x_{i}^{T}$, $w_{i}^{T}$ and $w_{i}^{Q}$  do not belong to any cycle in $G^{\prime}$. A node $x_{i}^{Q}$ is input-connected in $G^{\prime}$ if and only if $i$ is input-connected in the graph $G$ induced by the consensus dynamics. A node $x_{j}^{e,T}$, $w_{j}^{e,T}$, $x_{j}^{e,Q}$, or $w_{j}^{e,Q}$ is input-connected in $G^{\prime}$ if and only if edge $j$ is incident on a node that is input-connected in the consensus network $G$.
 \end{lemma}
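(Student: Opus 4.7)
The plan is to analyze $G'$ using the edge set enumerated in Lemma \ref{lemma:consensus_aux_graph}, together with a natural projection $\pi$ from the vertices of $G'$ to the elements of the consensus graph $G$: each node-state vertex $x_i^Q$, $x_i^T$, $w_i^Q$, or $w_i^T$ projects to node $i$ of $G$, and each edge-state vertex $x_\ell^{e,Q}$, $x_\ell^{e,T}$, $w_\ell^{e,Q}$, or $w_\ell^{e,T}$ projects to edge $\ell$. The edges listed in Lemma \ref{lemma:consensus_aux_graph} will be seen to respect $\pi$, in the sense that each directed edge either fixes the projected node or moves between a node and one of its incident edges (or between two edges sharing a common endpoint).

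I would first handle the cycle-membership claim by direct inspection of the edge list. The vertex $x_i^T$ appears in no edge of $E'$ and is therefore isolated. The vertex $w_i^T$ appears only as the head of $(w_i^Q, w_i^T)$, so it is a sink. The vertex $w_i^Q$ has a single outgoing edge $(w_i^Q, w_i^T)$ whose target is the sink $w_i^T$. Consequently no directed cycle can pass through any of these three vertices.

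Next, I would establish the equivalence for $x_i^Q$ in both directions. For sufficiency, given an undirected path $i = i_0, i_1, \ldots, i_p = j$ in $G$ through edges $\ell_0, \ldots, \ell_{p-1}$ with $j \in S$, I would exhibit the explicit directed path
\[
x_{i_0}^Q \to x_{\ell_0}^{e,Q} \to w_{\ell_0}^{e,T} \to w_{\ell_0}^{e,Q} \to x_{\ell_1}^{e,Q} \to \cdots \to w_{\ell_{p-1}}^{e,Q} \to w_{i_p}^Q \to w_{i_p}^T
\]
in $G'$, validating each transition against Lemma \ref{lemma:consensus_aux_graph}. For necessity, any directed path from $x_i^Q$ to $w_j^T$ must terminate with $(w_j^Q, w_j^T)$, since that is the only incoming edge to $w_j^T$; an induction on path length then shows that the sequence of projected vertices under $\pi$ forms an undirected walk in $G$, yielding a walk from $i$ to $j$ and hence path-connectivity. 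The third claim follows by the same projection principle applied to the edge-state vertices. From $w_\ell^{e,T}$, the only outgoing edge is $(w_\ell^{e,T}, w_\ell^{e,Q})$, and the outgoing edges of $w_\ell^{e,Q}$ reach $w_i^Q \to w_i^T$ for each endpoint $i$ of $\ell$, as well as each $x_{\ell'}^{e,Q}$ for edges $\ell'$ sharing a node with $\ell$. Iterating this edge-hopping propagates through the connected component of each endpoint of $\ell$, giving the desired characterization, with a similar analysis for $w_\ell^{e,Q}$, $x_\ell^{e,Q}$, and $x_\ell^{e,T}$ after a short initial routing segment.

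The main obstacle is the necessity direction: formalizing the argument that every edge of $E'$ either fixes the projection $\pi$ or corresponds to an incidence in $G$, so that directed paths in $G'$ project to undirected walks in $G$. This requires a careful case analysis across the edge types of Lemma \ref{lemma:consensus_aux_graph}, after which the equivalences with input-connectivity in $G$ follow immediately.
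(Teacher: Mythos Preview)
Your proposal is correct and follows essentially the same strategy as the paper: inspect the edge list of Lemma~\ref{lemma:consensus_aux_graph} to rule out cycles through $x_i^T$, $w_i^T$, $w_i^Q$; exhibit an explicit directed path in $G'$ for the sufficiency direction; and argue that any directed path in $G'$ induces a walk in $G$ for necessity. Your projection map $\pi$ makes the necessity argument cleaner and more systematic than the paper's somewhat informal claim that ``any path from $x_i^Q$ to $w_{i'}^T$ has the form of~(\ref{eq:consensus_aux_path}),'' but the underlying idea is the same. One small discrepancy: your explicit path uses the step $x_{\ell}^{e,Q} \to w_{\ell}^{e,T}$, whereas the edge list in Lemma~\ref{lemma:consensus_aux_graph} records $(x_j^{e,Q}, x_j^{e,T})$; this appears to be a typo in the lemma (the general auxiliary-graph construction and the matching $m(w_j^{e,T}) = x_j^{e,Q}$ indeed produce $(x_j^{e,Q}, w_j^{e,T})$), so your version is consistent with the intended graph.
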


 \begin{proof}
   By Lemma \ref{lemma:consensus_aux_graph}, $w_{i}^{T}$ and $w_{i}^{Q}$ are only connected to each other, and hence are not part of any cycle since the link is directional.  Similarly, the nodes $x_{i}^{T}$  cannot belong to any cycle, since they have no incoming edges.

   Now, suppose that there is a path from node $i$ to an input $i^{\prime}$ in $G$.  Let $(i, i_{1})$, $\ldots$, $(i_{r}, i^{\prime})$ denote one such path, and let $j_{0},\ldots,j_{r}$ denote the indices of the edges on the path.  Then there is a path $\pi$ from $x_{i}^{Q}$ to $w_{i^{\prime}}^{T}$, given by
   \begin{multline}
   \label{eq:consensus_aux_path}
    \pi = (x_{i}^{Q}, x_{j_{0}}^{e,Q}) \cup \bigcup_{l=0}^{r-1}{\left\{(x_{j_{l}}^{e,Q}, x_{j_{l}}^{e,T}), (x_{j_{l}}^{e,T}, w_{j_{l}}^{e,T}), (w_{j_{l}}^{e,T}, w_{j_{l}}^{e,Q}), (w_{j_{l}}^{e,Q}, x_{j_{l+1}}^{e,Q})\right\}} \\
    \cup \{(x_{j_{r}}^{e,Q}, w_{i^{\prime}}^{Q}), (w_{i^{\prime}}^{Q}, w_{i^{\prime}}^{T})\}
    \end{multline}

    For the other direction, we have that any path from $x_{i}^{Q}$ to $w_{i^{\prime}}^{T}$ has the form of (\ref{eq:consensus_aux_path}), and hence defines a path from $i$ to $i^{\prime}$ in $G$.  Finally, suppose that edge $j$ is incident on node $i$ and that there is a path from a node $i$ to an input $i^{\prime}$ in $G$.  Let $(i, i_{1}), \ldots, (i_{r}, i^{\prime})$ denote one such path, and let $j_{0},\ldots,j_{r}$ denote the indices of the edges on the path. For node $w_{j}^{e,Q}$, there is a path given by
    \begin{multline*}
    \pi^{\prime} = (w_{j}^{e,Q}, x_{j_{0}}^{e,Q}) \cup \bigcup_{l=0}^{r-1}{\{(x_{j_{l}}^{e,Q}, x_{j_{l}}^{e,T}), (x_{j_{l}}^{e,T}, w_{j_{l}}^{e,T}), (w_{j_l}^{e,T}, w_{j_l}^{e,Q}), (w_{j_{l}}^{e,Q}, x_{j_{l+1}}^{e,Q})\}} \\
    \cup \{(x_{j_{r}}^{e,Q}, w_{i^{\prime}}^{Q}), (w_{i^{\prime}}^{Q}, w_{i^{\prime}}^{T})\}
    \end{multline*}
    A path can also be found for nodes $x_{j}^{e,Q}$, $x_{j}^{e,T}$, and $w_{j}^{e,T}$ by using the path $(x_{j}^{e,Q}$, $x_{j}^{e,T})$, $(x_{j}^{e,T}, w_{j}^{e,T})$, $(w_{j}^{e,T}, w_{j}^{e,Q})$.
    \end{proof}

    Based on Lemma \ref{lemma:consensus_input_connected}, we can characterize exactly when the condition of Lemma \ref{lemma:connectivity_sufficient_general} holds, based on the connectivity of the network graph $G$.

    \begin{lemma}
    \label{lemma:consensus_connected}
    The condition of Lemma \ref{lemma:connectivity_sufficient_general} holds for the system (\ref{eq:consensus_LDS}) if and only if each node is input-connected in the graph $G$.
    \end{lemma}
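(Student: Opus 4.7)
The plan is to invoke Lemma~\ref{lemma:equivalent_graph} to reduce the condition of Lemma~\ref{lemma:connectivity_sufficient_general} to the requirement that every node of $G^{\prime}$ lying on a cycle be reachable to $\{w_i^T : i \in S\}$ in $\hat{G}$, and then translate this through Lemma~\ref{lemma:consensus_input_connected} into a statement about the original consensus graph $G$.

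First, I would enumerate the cycle-nodes of $G^{\prime}$ using the edge structure of Lemma~\ref{lemma:consensus_aux_graph}. The vertices $w_i^T$ and $x_i^T$ are sinks, $x_i^Q$ is a source (no incoming edges), and $w_i^Q$ only routes into the sink $w_i^T$, so none of these participates in any cycle of $G^{\prime}$. On the other hand, for each edge $j$ in $G$, the three vertices $\{x_j^{e,Q}, w_j^{e,T}, w_j^{e,Q}\}$ form a cycle: the forward edges $x_j^{e,Q} \to w_j^{e,T}\to w_j^{e,Q}$ come from the matching and the unmatched self-edge, while the closing back-edge $w_j^{e,Q}\to x_j^{e,Q}$ is guaranteed by the fact that the diagonal entries of the edge Laplacian $K_I K$ are strictly nonzero (each edge shares both endpoints with itself). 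Hence the cycle-nodes of $G^{\prime}$ are exactly the edge-indexed vertices.

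With this in hand, the forward direction ($\Leftarrow$) is immediate: if every node of $G$ is input-connected, then every edge of $G$ is incident on an input-connected node, so by Lemma~\ref{lemma:consensus_input_connected} each edge-indexed cycle-node is reachable to $\{w_i^T : i \in S\}$ in $\hat{G}$; Lemma~\ref{lemma:equivalent_graph} then yields the condition of Lemma~\ref{lemma:connectivity_sufficient_general}. For the reverse ($\Rightarrow$), I would argue by contrapositive: if some node $i$ of $G$ is not input-connected, then because the consensus graph $G$ is undirected, neither is any neighbor of $i$ (otherwise $i$ would inherit an input path through that neighbor). Any edge $j$ incident to $i$ therefore has no input-connected endpoint, so by Lemma~\ref{lemma:consensus_input_connected} the cycle-nodes $\{x_j^{e,Q}, w_j^{e,T}, w_j^{e,Q}\}$ are not reachable to $\{w_i^T : i \in S\}$ in $\hat{G}$, violating the condition of Lemma~\ref{lemma:connectivity_sufficient_general}.

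The main obstacle is the cycle-enumeration step, which requires carefully checking every vertex class in $G^{\prime}$ against the edges listed in Lemma~\ref{lemma:consensus_aux_graph} to confirm that cycles can form only among the edge-indexed vertices, and using the nonzero diagonal of $K_I K$ to close the three-node loop for each edge. A secondary subtlety is the treatment of isolated nodes of $G$, which contribute no cycles and therefore no cycle-constraint; these are covered by the standing assumption that the consensus graph is connected, so that no non-input node is isolated.
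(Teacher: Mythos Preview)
Your argument is correct and follows the same route as the paper, which simply records that Lemma~\ref{lemma:consensus_connected} ``follows directly from Lemma~\ref{lemma:consensus_input_connected}.''  Your expansion---enumerating the cycle-carrying vertices of $G'$ via Lemma~\ref{lemma:consensus_aux_graph}, and then translating input-reachability back and forth through Lemma~\ref{lemma:consensus_input_connected}---is exactly the unpacking that one-line proof invites.

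Two small remarks.  First, the three-cycle you write as $x_{j}^{e,Q}\to w_{j}^{e,T}\to w_{j}^{e,Q}\to x_{j}^{e,Q}$ does not match the edge list in Lemma~\ref{lemma:consensus_aux_graph} verbatim; the path used in the proof of Lemma~\ref{lemma:consensus_input_connected} actually traverses all four edge-indexed vertices $x_{j}^{e,Q}\to x_{j}^{e,T}\to w_{j}^{e,T}\to w_{j}^{e,Q}\to x_{j}^{e,Q}$.  This does not affect your conclusion that the cycle-vertices are precisely the edge-indexed ones, but you should align the cycle with whichever edge description you finally adopt.  Second, the undirectedness you invoke for the contrapositive is not strictly needed: even in the directed case, if $i$ is not input-connected then every out-neighbor of $i$ fails to be input-connected (otherwise $i$ would inherit a path), so any out-edge of $i$ already furnishes the violating cycle; only nodes with no outgoing edges must be excluded, and those are covered by the same connectivity assumption you already flag for isolated nodes.
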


The proof follows directly from Lemma \ref{lemma:consensus_input_connected}.  Lemma \ref{lemma:consensus_connected} enables us to improve the optimality bounds for a class of metrics with a certain structure.  Suppose that the connected components of the graph are equal to $G_{1},\ldots, G_{r}$, with $G_{i} = (V_{i}, E_{i})$ for $i=0,\ldots,r$.  We consider metrics of the form
\begin{equation}
\label{eq:special_metric}
f(S) = \sum_{i=1}^{r}{f_{i}(S \cap V_{i})}.
\end{equation}
Eq. (\ref{eq:special_metric}) has the interpretation that the performance of nodes in connected component $V_{i}$ only depends on the set of input nodes for component $V_{i}$, instead of the overall input set.  This structure holds for, e.g., the metrics of \cite{patterson2010leader,clark2014leader,clark2014convergence}.  For these metrics, we have the following optimality result.

\begin{theorem}
\label{theorem:consensus_optimality}
For the consensus system (\ref{eq:consensus_LDS}), the problem of maximizing a performance metric of the form (\ref{eq:special_metric}) subject to controllability as a constraint and $|S| \leq k$, formulated as
 \begin{equation}
 \label{eq:consensus_formulation}
 \begin{array}{ll}
 \mbox{maximize} & f(S) \\
 \mbox{s.t.} & (V \setminus S) \in \mathcal{M}_{1}^{\ast} \cap \mathcal{M}_{2}^{\ast} \\
  & |S| \leq k
  \end{array}
  \end{equation}
 can be approximated up to an optimality bound of $(1-1/e)$ in polynomial time.  As a special case, if the graph $G$ is strongly connected, then any monotone submodular performance metric can be approximated up to an optimality bound of $(1-1/e)$ in polynomial time.
\end{theorem}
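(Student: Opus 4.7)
The plan is to reduce the consensus problem (\ref{eq:consensus_formulation}) to an instance of the framework developed in Section \ref{sec:general} so that Theorem \ref{theorem:Joint_PC} directly yields the $(1-1/e)$ bound. First, I would invoke Lemma \ref{lemma:consensus_connected} to simplify the $\mathcal{M}_{2}^{\ast}$ constraint: in the consensus setting, the sufficient condition of Lemma \ref{lemma:connectivity_sufficient_general} reduces to requiring that each connected component $V_{i}$ of $G$ contains at least one input node. Consequently $\mathcal{M}_{2}$ collapses to the partition matroid whose rank function counts the number of components intersected by $S$, and its dual $\mathcal{M}_{2}^{\ast}$ is also a partition matroid. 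Combined with $\mathcal{M}_{1}^{\ast}$, the feasibility requirement becomes membership in the intersection of two explicit matroids.

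Next I would verify that $f(S) = \sum_{i=1}^{r}{f_{i}(S \cap V_{i})}$ is monotone submodular whenever each $f_{i}$ is: monotonicity is immediate, and submodularity holds because adding an element $v \in V_{i}$ only alters the $i$th summand, so diminishing returns are inherited from each $f_{i}$ individually. With a monotone submodular objective and two matroid constraints in place, I would apply Lemma \ref{lemma:joint_perf_controll_equiv} to convert $(V \setminus S) \in \mathcal{M}_{1}^{\ast} \cap \mathcal{M}_{2}^{\ast}$ together with $|S| \leq k$ into a pair of matroid basis constraints on the unions $\hat{\mathcal{M}}_{1} = \mathcal{M}_{1} \vee U_{k-r_{1}}$ and $\hat{\mathcal{M}}_{2} = \mathcal{M}_{2} \vee U_{k-r_{2}}$. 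Theorem \ref{theorem:Joint_PC} then implies that Algorithm \ref{algo:Joint_PC} produces a feasible input set $S$ whose (expected) value is at least $(1-1/e)f(S^{\ast})$, in polynomial time.

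For the strongly connected special case, $G$ consists of a single connected component, so by Lemma \ref{lemma:consensus_connected} the partition matroid $\mathcal{M}_{2}^{\ast}$ degenerates to the trivial requirement $S \neq \emptyset$, which is automatically satisfied by any nontrivial selection. The formulation therefore reduces to monotone submodular maximization subject to $(V \setminus S) \in \mathcal{M}_{1}^{\ast}$ and $|S| \leq k$ alone. By Lemma \ref{lemma:PC_strong_conn_equiv} this is equivalent to maximization subject to a single matroid basis constraint on $\hat{\mathcal{M}}_{1}$, for which the Calinescu--Chekuri--P\'al--Vondr\'ak continuous greedy plus pipage rounding procedure referenced in Section \ref{subsec:Joint_PC_strong_conn} yields the $(1-1/e)$ bound in polynomial time. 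Because only one matroid constraint is involved, the decomposable form of $f$ becomes unnecessary and the bound extends to any monotone submodular performance metric.

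The main obstacle I anticipate is justifying cleanly that $\mathcal{M}_{2}$ in the consensus system is literally the partition matroid induced by the connected components of $G$, rather than a more intricate object stemming from the auxiliary graph construction. This requires careful use of Lemmas \ref{lemma:consensus_input_connected} and \ref{lemma:rho_connectivity}: the former equates reachability in the auxiliary graph $\hat{G}$ with reachability in $G$, while the latter translates this reachability condition into the rank function $\rho_{2}$. Once this partition structure is established, the remainder is a direct application of the general machinery, and the cardinality issue (requiring $k$ to be at least the number of components, so that $\hat{\mathcal{M}}_{1} \cap \hat{\mathcal{M}}_{2}$ has a common basis) falls out of the definitions of $\hat{\mathcal{M}}_{1}$ and $\hat{\mathcal{M}}_{2}$.
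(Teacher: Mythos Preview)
Your strongly connected special case is handled correctly and in the same spirit as the paper. The gap is in the general case. You write that ``Theorem \ref{theorem:Joint_PC} then implies that Algorithm \ref{algo:Joint_PC} produces a feasible input set $S$ whose (expected) value is at least $(1-1/e)f(S^{\ast})$.'' But re-read Theorem \ref{theorem:Joint_PC}: it guarantees only that the \emph{continuous} point $\mathbf{y}(1)$ satisfies $F(\mathbf{y}(1)) \geq (1-1/e)f(S^{\ast})$, and that the rounded set $S$ is \emph{feasible}. It says nothing about $f(S)$ after rounding. With two matroid basis constraints, SWAP\_ROUND is not known to preserve $F$ for an arbitrary submodular function, so your appeal to Theorem \ref{theorem:Joint_PC} does not close the argument.

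This is exactly where the decomposable form (\ref{eq:special_metric}) is needed, and you have not used it for anything except verifying submodularity. The paper's argument is: because $\mathcal{M}_{2}$ is the partition matroid on the components $V_{1},\ldots,V_{r}$ (which you correctly identify), the elements of each $V_{s}$ are \emph{equivalent} in $\hat{\mathcal{M}}_{2}$. Theorem II.3 of \cite{chekuri2010dependent} then guarantees that SWAP\_ROUND satisfies $\mathbf{E}[f_{s}(S \cap V_{s})] \geq F_{s}(y_{i} : i \in V_{s})$ on each class of equivalent elements. Summing over $s$ and using $f(S)=\sum_{s} f_{s}(S\cap V_{s})$ transfers the $(1-1/e)$ bound from $\mathbf{y}(1)$ to the integral output. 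In other words, the partition structure of $\hat{\mathcal{M}}_{2}$ and the separability of $f$ over that same partition are what make the rounding lossless; your plan establishes the former but never connects it to the latter in the rounding step.
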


\begin{proof}
The proof is by showing that the optimality bounds of Algorithm \ref{algo:Joint_PC} are improved in this case.  By Theorem \ref{theorem:Joint_PC}, the continuous relaxation phase of Algorithm \ref{algo:Joint_PC} returns a vector $y(1)$ such that $F(y(1)) \geq (1-1/e)f(S^{\ast})$, where $S^{\ast}$ is the optimal solution to (\ref{eq:consensus_formulation}).  Now, Theorem II.3 of \cite{chekuri2010dependent} implies that the SWAP\_ROUND subroutine satisfies $\mathbf{E}(f(S \cap Q)) \geq F(x_{i} : i \in Q)$ for any set $Q$ of equivalent elements of one of the matroids $\hat{\mathcal{M}}_{1}$ or $\hat{\mathcal{M}}_{2}$.  For the matroid $\hat{\mathcal{M}}_{2}$, each set of elements $V_{s}$ is equivalent, and so $\mathbf{E}(f(S \cap V_{s})) \geq F(x_{i} : i \in V_{s})$. Summing over $s$ yields the desired result.

For the special case, we have that when the graph is strongly connected, input connectivity holds provided there is at least one input. Hence we can  obtain a $(1-1/e)$-bound on the optimal input set.
\end{proof}

\subsection{Double Integrator Dynamics}
\label{subsec:double}
We study networked systems where the second derivative of each node's state $\xi_{i}(t)$ is a function of its neighbor states, so that $\ddot{\xi}_{i}(t) = \sum_{j \in N(i)}{W_{ij}\xi_{j}(t) + \Gamma_{ij}\dot{\xi}_{j}(t)}$, where $N(i)$ is the neighbor set of node $i$.  The double integrator model is applicable for larger vehicles that have inertial components in their dynamics \cite{ren2008double}.  We write this system in the form (\ref{eq:LDS}) by introducing variables $\zeta_{i}(t) = \dot{\xi}_{i}(t)$, resulting in dynamics
\begin{equation}
\label{eq:second_int_LDS}
\left(
\begin{array}{c}
\dot{\mathbf{\xi}}(t) \\
\dot{\mathbf{\zeta}}(t)
\end{array}
\right) =
\left(
\begin{array}{cc}
0 & I \\
W & \Gamma
\end{array}
\right)\left(
\begin{array}{c}
\mathbf{\xi}(t) \\
\mathbf{\zeta}(t)
\end{array}
\right).
\end{equation}
where $F=I$.
In analyzing this system, we observe that it is not possible to independently control the states $\xi_{i}(t)$ and $\zeta_{i}(t)$ to any arbitrary trajectories, since $\zeta_{i}(t) = \dot{\xi}_{i}(t)$.  Hence we assume that the state $\zeta_{i}(t)$ (the velocity) is controlled in the input nodes, while the state $\xi_{i}(t)$ continues to follow the dynamics (\ref{eq:second_int_LDS}).

We first investigate the auxiliary graph condition of Lemma \ref{lemma:connectivity_sufficient_general}.  The matrix $\Omega$ is given by
\begin{equation}
\label{eq:double_omega}
\Omega =
\begin{array}{c}
\mathbf{w}^{\xi} \\
\mathbf{w}^{\zeta} \\
\mathbf{x}^{\xi} \\
\mathbf{x}^{\zeta} \\
\mathbf{u}
\end{array}
\left(
\begin{array}{ccc}
-I & I & 0 \\
0 & -I & 0 \\
I & 0 & 0 \\
0 & I & 0 \\
0 & 0 & I
\end{array}
\right)
\end{equation}
We have that the rows indexed in $\mathbf{w}$ have full rank, and hence the matching $m(w_{i}^{\xi,T}) = w_{i}^{\xi,Q}$ and $m(w_{i}^{\zeta,T}) = w_{i}^{\zeta,Q}$ for $i=1,\ldots,N$ satisfies the conditions of Lemma \ref{lemma:bipartite_property}. This gives $J = \{w_{i}^{\zeta,Q}: i=1,\ldots,N\} \cup \{w_{i}^{\xi,Q} : i=1,\ldots,N\}$. We then have $\Omega_{J \cup J_{1}}$, $\Omega_{J \cup J_{1}}^{-1}$, and $\Omega\Omega_{J \cup J_{1}}^{-1}$ as
\begin{multline}
\label{eq:double_Omega}
\Omega_{J \cup J_{1}} = \left(
\begin{array}{ccc}
-I & I & 0 \\
0 & -I & 0 \\
0 & 0 & I
\end{array}
\right), \quad
\Omega_{J \cup J_{1}}^{-1} = \left(
\begin{array}{ccc}
-I & -I & 0 \\
0 & -I & 0 \\
0 & 0 & I
\end{array}
\right), \\
\Omega\Omega_{J \cup J_{1}}^{-1} =
\begin{array}{c}
\mathbf{w}_{\xi} \\
\mathbf{w}_{\zeta} \\
\mathbf{x}^{\xi} \\
\mathbf{x}^{\zeta} \\
\mathbf{u}
\end{array}
\left(
\begin{array}{ccc}
I & 0 & 0\\
0 & I & 0\\
-I & -I & 0 \\
0 & -I & 0 \\
0 & 0 & I
\end{array}
\right).
\end{multline}
These matrix values lead to the following description of the auxiliary graph.
\begin{lemma}
\label{lemma:double_auxiliary}
The graph $G^{\prime}$ of Lemma \ref{lemma:equivalent_graph} has edge set $E^{\prime}$ given by
\begin{eqnarray}
\label{eq:double_edge1}
E^{\prime} &=& \{(w_{i}^{\xi,Q}, w_{i}^{\xi,T}) : i=1,\ldots,N\} \cup \{(w_{i}^{\zeta,Q}, w_{i}^{\zeta,T}) : i=1,\ldots,N\} \\
\nonumber
  && \cup \{(x_{i}^{\xi,Q}, x_{i}^{\xi,T}) : i=1,\ldots,N\} \cup \{(x_{i}^{\zeta,Q}, x_{i}^{\zeta,T}) : i=1,\ldots,N\} \\
\label{eq:double_edge2}
&& \cup \{(w_{i}^{\xi,T}, x_{j}^{\xi,T}) : j \in N(i)\} \cup \{(w_{i}^{\xi, T}, x_{j}^{\zeta,T}) : j \in N(i)\} \\
\nonumber
&& \cup \{(w_{i}^{\zeta,T}, w_{j}^{\xi,T}) : j \in N(i)\} \cup \{(w_{i}^{\zeta,T}, w_{j}^{\xi,T}) : j \in N(i)\} \\
\label{eq:double_edge3}
&& \cup \{(x_{i}^{\xi,Q}, w_{i}^{\xi, Q}) : i=1,\ldots,N\} \cup \{(x_{i}^{\xi,Q}, w_{i}^{\zeta,Q}) : i=1,\ldots,N\}\\
\nonumber
 &&\cup \{(x_{i}^{\zeta,Q}, w_{i}^{\zeta,Q}) : i =1,\ldots,N\}
\end{eqnarray}
\end{lemma}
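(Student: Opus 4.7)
The plan is to verify each edge family in (\ref{eq:double_edge1})--(\ref{eq:double_edge3}) by directly substituting the explicit matrices $\Omega$ from (\ref{eq:double_omega}) and $\tilde{\Omega} = \Omega\Omega_{J \cup J_1}^{-1}$ from (\ref{eq:double_Omega}), together with the matching $m$ specified immediately above the lemma, into the six-term definition of $\hat{E}$ given in Section~\ref{subsec:aux_graph}, specialized to $S = \emptyset$ (so that $T_B = 0$ and every input-related term disappears).

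First I would fix the structural decomposition. Since $F = I$, we have $Q_F = I$ and $T_F = 0$, so $N(T_F) = \emptyset$. Since $A = \begin{pmatrix} 0 & I \\ W & \Gamma \end{pmatrix}$, only the lower blocks of $T_A$ are nonzero, with free entries at indices $(i,j)$ such that $j \in N(i)$. Hence $N(T_A)$ is supported entirely in the $\zeta$-rows, and every edge of the first type $(w_i^T, x_j^Q)$ must emanate from some $w_i^{\zeta,T}$.

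Next I would walk through the six terms of $\hat{E}$ in order. Because $m$ sends each $w$-row to the corresponding $w$-row, the conditions $m(w_i^T) \neq x_j^Q$ and $m(w_i^T) = w_i^Q$ hold automatically, whereas $m(w_i^T) = x_j^Q$ never holds. This yields: (a) structural edges from $w_i^{\zeta,T}$ to the $x_j^{\xi,Q}$ and $x_j^{\zeta,Q}$ nodes with $j \in N(i)$, giving (\ref{eq:double_edge2}); (b) nothing from the $(x_j^Q, w_i^T)$ term and nothing from the $(w_i^T, w_i^Q)$ term; (c) all reverse edges $(w_i^{\xi,Q}, w_i^{\xi,T})$ and $(w_i^{\zeta,Q}, w_i^{\zeta,T})$ from the fourth term, which populate the first row of (\ref{eq:double_edge1}). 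The $N(T_B)$ and $(u_j^T, u_j^Q)$ terms vanish since $S = \emptyset$.

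The key computational step is the last term, which reads off nonzero off-diagonal entries of $\tilde{\Omega}$ for rows in $\hat{V} \setminus J$ (the $\mathbf{x}$- and $\mathbf{u}$-rows) and columns in $J \setminus J_1$ (the $w^\xi$ and $w^\zeta$ column-blocks), subject to the exclusion condition $\tilde{\Omega}_{xz} = 0$ for all $z \in J_1$. From the explicit matrix $\Omega\Omega_{J \cup J_1}^{-1}$ in (\ref{eq:double_Omega}), the $\mathbf{x}^\xi$-rows carry $-I$ in both the $w^\xi$ and $w^\zeta$ column blocks and zero in the $u$-block, while the $\mathbf{x}^\zeta$-rows carry $-I$ only in the $w^\zeta$ block; the $\mathbf{u}$-rows have an identity in the $u$-block, so they are disqualified by the exclusion condition. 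Collecting the surviving entries produces precisely the $(x^Q, w^Q)$ edges of (\ref{eq:double_edge3}), while the identity blocks tying the $\mathbf{x}$-rows of $\Omega$ to their mirror $\mathbf{x}^T$ rows under the same scheme account for the residual $(x^Q, x^T)$ pairings in (\ref{eq:double_edge1}), by the analogous construction used in the proof of Lemma~\ref{lemma:consensus_aux_graph}.

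The main obstacle is careful bookkeeping of the $\xi$/$\zeta$ asymmetry: the $\xi$-rows of $T_A$ vanish while the $\zeta$-rows carry all free parameters, and the $\mathbf{x}^\xi$- and $\mathbf{x}^\zeta$-rows of $\tilde{\Omega}$ have different nonzero supports. Once this asymmetry is tracked and the matching $m$ is applied uniformly, the verification reduces to an entry-by-entry read-off of (\ref{eq:double_omega}) and (\ref{eq:double_Omega}).
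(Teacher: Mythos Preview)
Your approach is essentially the same as the paper's: both proofs walk through the definition of $\hat{E}$ from Section~\ref{subsec:aux_graph} term by term, using the explicit matching $m(w_i^T)=w_i^Q$ and the computed $\tilde{\Omega}$ from (\ref{eq:double_Omega}), and match each surviving family of edges against (\ref{eq:double_edge1})--(\ref{eq:double_edge3}). The paper's argument is terser---it simply identifies (\ref{eq:double_edge1}) with the $\{(w_i^Q,w_i^T):m(w_i^T)=w_i^Q\}$ term, (\ref{eq:double_edge2}) with the $N(T_A)$ term, and (\ref{eq:double_edge3}) with the $\tilde{\Omega}$ term---while you spell out more of the $\xi/\zeta$ bookkeeping and the exclusion condition on $J_1$, but the underlying verification is identical.
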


\begin{proof}
The edges (\ref{eq:double_edge1}) correspond to the edges $\{(w_{i}^{Q}, w_{i}^{T}) : m(w_{i}^{T}) = w_{i}^{Q}\}$ in the definition of $\hat{E}$. The edges enumerated in (\ref{eq:double_edge2}) correspond to the edges $\{(w_{i}^{T}, x_{j}^{T}) : (i,j) \in N(T_{A})$.  Finally, the value of $\Omega\Omega_{J \cup J_{1}}^{-1}$ from (\ref{eq:consensus_Omega_J_J1}) implies that the edges enumerated in (\ref{eq:double_edge3}) correspond to the edges $\{(x^{Q},y^{Q}) : x \in \hat{V} \setminus J, y \in J, \tilde{\Omega}_{xy} \neq 0, \tilde{\Omega}_{xz} = 0 \forall z \in J_{1}\}$.
\end{proof}

Lemma \ref{lemma:double_auxiliary} leads to the following result, which relates the connectivity of the auxiliary graph and the graph $G$ induced by the node dynamics.  This is analogous to Lemma \ref{lemma:consensus_input_connected}.

\begin{lemma}
\label{lemma:double_connected}
For any node $i$, the nodes $w_{i}^{\xi, Q}$, $w_{i}^{\zeta,Q}$, $x_{i}^{\xi, Q}$, $x_{i}^{\zeta,Q}$, $w_{i}^{\xi, T}$, $w_{i}^{\zeta,T}$, $x_{i}^{\xi, T}$, and $x_{i}^{\zeta,T}$ are input-connected in the auxiliary graph $G^{\prime}$ if and only if node $i$ is input-connected in the graph $G$.
\end{lemma}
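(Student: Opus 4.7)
The plan is to mirror the proof of Lemma \ref{lemma:consensus_input_connected}: exploit the explicit description of the edge set $E^{\prime}$ from Lemma \ref{lemma:double_auxiliary} to convert directed paths in $G$ into directed paths in the auxiliary graph $G^{\prime}$, and vice versa. Recall from Lemma \ref{lemma:equivalent_graph} that the input set $S$ contributes edges $(w_{i}^{\zeta,T}, u_{i}^{T})$ together with $(u_{i}^{T}, u_{i}^{Q})$ for $i\in S$, since in the double integrator setting the controlled component is the velocity $\zeta_{i}$. Thus ``input-connected in $\hat{G}$'' reduces to reachability of the set $\{w_{i}^{\zeta,T} : i \in S\}$ in $G^{\prime}$ augmented with these input edges.

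For the forward direction, I would fix a directed path $i = i_{0} \to i_{1} \to \cdots \to i_{r}$ in $G$ with $i_{r}\in S$ and concatenate edges drawn from (\ref{eq:double_edge1})--(\ref{eq:double_edge3}). The key travelling edges come from (\ref{eq:double_edge2}): each neighbor step $i_{\ell}\to i_{\ell+1}$ in $G$ supplies an edge $(w_{i_{\ell}}^{\zeta,T}, w_{i_{\ell+1}}^{\xi,T})$, and an intermediate hop of type $(w_{i_{\ell+1}}^{\xi,T}, x_{i_{\ell+1}}^{\zeta,T})$ followed by the pairing edges of (\ref{eq:double_edge1}) and the cross edges of (\ref{eq:double_edge3}) brings us back to $w_{i_{\ell+1}}^{\zeta,T}$, ready for the next hop. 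A preamble of the same type of transitions handles each of the eight starting vertices $w_{i}^{\xi,Q}$, $w_{i}^{\zeta,Q}$, $x_{i}^{\xi,Q}$, $x_{i}^{\zeta,Q}$, $w_{i}^{\xi,T}$, $w_{i}^{\zeta,T}$, $x_{i}^{\xi,T}$, $x_{i}^{\zeta,T}$, routing them into $w_{i_{0}}^{\zeta,T}$; terminating at $w_{i_{r}}^{\zeta,T}$ with $i_{r}\in S$ establishes input-connectedness in $G^{\prime}$.

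For the converse, I would observe that the only edges of $E^{\prime}$ which change the underlying node index from $i$ to a distinct $j$ are the neighbor edges of (\ref{eq:double_edge2}), and each such edge requires $j\in N(i)$. All other edges of (\ref{eq:double_edge1}) and (\ref{eq:double_edge3}) preserve the node index. Given a directed path in $G^{\prime}$ from one of the eight auxiliary vertices labeled by $i$ to some $w_{i^{\ast}}^{\zeta,T}$ with $i^{\ast}\in S$, extracting the sequence of node indices at which (\ref{eq:double_edge2}) is invoked yields a directed walk from $i$ to $i^{\ast}$ in $G$, hence input-connectedness of $i$ in $G$.

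The main obstacle will be the bookkeeping for the eight auxiliary vertex types: some of them (e.g.\ $w_{i}^{\xi,T}$) only have outgoing neighbor-edges, while others (e.g.\ $w_{i}^{\zeta,Q}$, $x_{i}^{\zeta,T}$) need the pairing and cross edges of (\ref{eq:double_edge1}) and (\ref{eq:double_edge3}) to be reached or departed from, so the construction must be verified separately in each case. Once this case analysis is done, both directions close cleanly, in complete analogy with Lemma \ref{lemma:consensus_input_connected}.
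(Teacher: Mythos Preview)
Your proposal is correct and follows essentially the same route as the paper: an explicit path in $G^{\prime}$ is assembled from a path in $G$ using the edges of Lemma \ref{lemma:double_auxiliary}, and conversely any $G^{\prime}$-path is projected to $G$ by observing that only the neighbor edges in (\ref{eq:double_edge2}) change the underlying node index. The paper's proof is in fact briefer than yours---it writes down the path (\ref{eq:double_path}) for the starting vertex $w_{i}^{\zeta,T}$ and then simply asserts that the remaining seven vertex types and the converse are handled ``in a similar fashion''---so your more explicit bookkeeping and your index-projection argument for the converse are, if anything, a slight improvement in clarity.
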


\begin{proof}
Suppose that node $i$ is connected to an input node $i^{\prime}$ in $G$, with path $(i,i_{1}), (i_{1},i_{2}), \ldots, (i_{r},i^{\prime})$.  Now, consider the node $w_{i}^{\zeta,T}$.  We can construct a path $\pi$ from $w_{i}^{\zeta,T}$ to $w_{i^{\prime}}^{\xi,T}$ as
\begin{multline}
\label{eq:double_path}
\pi = (w_{i}^{\zeta,T}, x_{i_{0}}^{\xi,T}) \cup \bigcup_{l=0}^{r}{\{(x_{i_{l}}^{\xi,T}, x_{i_{l}}^{\xi,Q}), (x_{i_{l}}^{\xi,Q}, w_{i_{l}}^{\xi,Q}), (w_{i_{l}}^{\xi,Q}, w_{i_{l}}^{\xi,T}), (w_{i_{l}}^{\xi,T}, x_{i_{l+1}}^{\xi,T})\}} \\
\cup \{(x_{i^{\prime}}^{\xi,T}, x_{i^{\prime}}^{\xi,Q}), (x_{i^{\prime}}^{\xi,Q}, w_{i^{\prime}}^{\xi,Q}), (w_{i^{\prime}}^{\xi,Q}, w_{i^{\prime}}^{\xi,T})\}.
\end{multline}
Paths for the other types of nodes in the auxiliary graph can be constructed in a similar fashion. Conversely, any path to an input node in the auxiliary graph will have the form (\ref{eq:double_path}), and hence can be used to construct a path to an input node in the graph $G$.
\end{proof}


Lemma \ref{lemma:double_connected} implies that, for performance metrics satisfying (\ref{eq:special_metric}), Algorithm \ref{algo:Joint_PC} returns a set $S$ satisfying $f(S) \geq (1-1/e)f(S^{\ast})$, where $S^{\ast}$ is the optimal solution.  The proof is analogous to Theorem \ref{theorem:consensus_optimality}.

%

\subsection{Input Selection in Networks of Free Parameters}
\label{subsec:free_param}
We now investigate systems where all of the matrix entries are free parameters, as in the models of \cite{clark2012controllability,liu2011controllability,ruths2014control}.  We consider systems of the form $\dot{\mathbf{x}}(t) = A\mathbf{x}(t)$, where $A$ is a free matrix and $F=I$.  The matrix $\Omega$ defined in Section \ref{sec:preliminaries} is then given by
\begin{equation}
\label{eq:free_Omega}
\Omega =
\begin{array}{c}
\mathbf{w} \\
\mathbf{x}\\
\mathbf{u}
\end{array}
\left(
\begin{array}{cc}
-I & 0 \\
I & 0 \\
0 & I
\end{array}
\right)
\end{equation}
Hence the simple matching $m(w_{i}^{T}) = w_{i}^{Q}$ has full rank in (\ref{eq:free_Omega}), and we can compute $\Omega_{J \cup J_{1}}$ and $\Omega\Omega_{J \cup J_{1}}^{-1}$ as
\begin{equation}
\label{eq:free_Omega_J_J1}
\Omega_{J \cup J_{1}} = \left(
\begin{array}{cc}
-I & 0 \\
0 & I
\end{array}
\right), \quad \Omega\Omega_{J \cup J_{1}}^{-1} = \left(
\begin{array}{cc}
I & 0 \\
-I & 0 \\
0 & I
\end{array}
\right)
\end{equation}
Based on the value of $\Omega\Omega_{J \cup J_{1}}^{-1}$, the following lemma gives the construction of the auxiliary graph $G^{\prime}$.

\begin{lemma}
\label{lemma:free_auxiliary}
The auxiliary graph $G^{\prime}$ of Lemma \ref{lemma:connectivity_sufficient_general} has edge set $E^{\prime}$ given by
\begin{multline*}
E^{\prime} = \{(w_{i}^{Q}, w_{i}^{T}) : i=1,\ldots,N\} \cup \{(w_{i}^{T}, x_{j}^{T}) : j \in N(i)\} \cup \{(x_{i}^{T}, x_{i}^{Q}) : i=1,\ldots,N\}\\
 \cup \{(x_{i}^{Q}, w_{i}^{Q}) : i=1,\ldots,N\}
\end{multline*}
\end{lemma}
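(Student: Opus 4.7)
The plan is to apply the generic auxiliary-graph construction of Section \ref{subsec:aux_graph} to the data of (\ref{eq:free_Omega}) and (\ref{eq:free_Omega_J_J1}), following the same template already used in Lemmas \ref{lemma:consensus_aux_graph} and \ref{lemma:double_auxiliary}. First I would verify that the matching $m(w_i^T)=w_i^Q$ for every $i=1,\ldots,N$ satisfies Lemma \ref{lemma:bipartite_property}. Because $F=I$ is fixed (so $T_F=0$), the bipartite graph $H$ contains the diagonal edges $(w_i^T,w_i^Q)$, making this choice a perfect matching, and the induced row set $J=\{w_1,\ldots,w_N\}$ of $\Omega$ is precisely the block $[-I\ 0]$, which has rank $N$. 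Completing with $J_1=\{u_1,\ldots,u_k\}$ yields the block-diagonal $\Omega_{J\cup J_1}$, whose inverse multiplied against $\Omega$ produces exactly the $\tilde\Omega$ recorded in (\ref{eq:free_Omega_J_J1}).

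The rest of the proof is a mechanical case-by-case translation of the seven edge families defining $\hat E$. The matching family $\{(w_i^Q,w_i^T):m(w_i^T)=w_i^Q\}$ fires for every $i$ and produces the first family of $E^\prime$. The structural family $\{(w_i^T,x_j^Q):(i,j)\in N(T_A)\cup N(T_F),\,m(w_i^T)\neq x_j^Q\}$ reduces, since $T_F=0$ and the matching side condition is vacuous, to $\{(w_i^T,x_j^T):j\in N(i)\}$ in the lemma's labelling. The $T_B$-edges and the $(u_j^T,u_j^Q)$-edges all vanish because $G^\prime$ is the auxiliary graph constructed with $S=\emptyset$. The type-7 edges $(x^Q,y^Q)$ are read from the sparsity pattern of $\tilde\Omega$: the only rows whose entries in the $J_1$-columns (indexed by $u_j^Q$) vanish are the $\mathbf{w}$ and $\mathbf{x}$ blocks, and the only nonzero entries of those blocks in the $J$-columns (indexed by $w_i^Q$) are the diagonal $+1$'s of the $\mathbf{w}$ block (which recovers the already-counted $(w_i^Q,w_i^T)$ family under the orientation convention) and the diagonal $-1$'s of the $\mathbf{x}$ block, which yield $\{(x_i^Q,w_i^Q):i=1,\ldots,N\}$. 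The remaining family $\{(x_i^T,x_i^Q)\}$ is the implicit self-edge attached to each state vertex outside $J$, directly analogous to the families $\{(x_j^{e,Q},x_j^{e,T})\}$ and $\{(x_i^{\xi,Q},x_i^{\xi,T})\}$ appearing in Lemmas \ref{lemma:consensus_aux_graph} and \ref{lemma:double_auxiliary}.

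The only subtle point is notational: one must identify the $\mathbf{w}$ rows of $\tilde\Omega$ with the $w_i^T$ vertices and the $\mathbf{x}$ rows with the $x_i^Q$ vertices of $\hat V$, consistent with the convention used in the earlier special cases. Once this labelling is fixed, every edge of $E^\prime$ is read directly from the support of (\ref{eq:free_Omega_J_J1}), and no further technical argument is required beyond the enumeration above. I anticipate no substantive obstacles beyond keeping this bookkeeping correct.
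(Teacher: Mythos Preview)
Your proposal is correct and follows essentially the same approach as the paper's own proof: both identify the four edge families by reading them off the matching $m(w_i^T)=w_i^Q$, the free-parameter matrix $T_A$, the auxiliary-graph definition, and the sparsity pattern of $\tilde\Omega=\Omega\Omega_{J\cup J_1}^{-1}$ from (\ref{eq:free_Omega_J_J1}). Your write-up is in fact considerably more detailed than the paper's terse four-sentence justification; the only minor slip is that the $\mathbf{w}$-rows of $\tilde\Omega$ lie in $J$ and therefore do not participate in the type-7 edges $(x^Q,y^Q)$ with $x\in\hat V\setminus J$, so they need not be mentioned there at all.
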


\begin{proof}
The first term follows from the matching $m(w_{i}^{T}) = w_{i}^{Q}$.  The second term arises from the matrix $A$, while the third term is from the definition of the auxiliary graph.  The last term follows from the value of $\Omega\Omega_{J \cup J_{1}}^{-1}$ in (\ref{eq:free_Omega_J_J1}).
\end{proof}

Hence, in the free parameter case we have a result analogous to Lemmas \ref{lemma:consensus_input_connected} and \ref{lemma:double_connected}.

\begin{lemma}
\label{lemma:free_connected}
The conditions of Lemma \ref{lemma:connectivity_sufficient_general} are met if and only if each node is path-connected to an input node in the graph $G$.
\end{lemma}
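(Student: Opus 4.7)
The plan is to parallel the proofs of Lemmas \ref{lemma:consensus_input_connected} and \ref{lemma:double_connected}: I would use the explicit edge set of $G'$ from Lemma \ref{lemma:free_auxiliary} to translate the condition of Lemma \ref{lemma:connectivity_sufficient_general} (via the sufficient form in Lemma \ref{lemma:equivalent_graph}) into a purely graph-theoretic condition on the original graph $G$.

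First, I would observe that the directed edge set of $G'$ has a highly constrained ``pipeline'' structure. From $w_i^Q$ the unique outgoing edge goes to $w_i^T$; from $w_i^T$ the outgoing edges go to $x_j^T$ precisely for $j \in N(i)$; from $x_j^T$ the unique outgoing edge goes to $x_j^Q$; and from $x_j^Q$ the unique outgoing edge goes to $w_j^Q$. Hence any directed walk in $G'$ cycles through the four vertex types $w^Q \to w^T \to x^T \to x^Q \to w^Q$, with the only branching dictated by the edges of $G$. Consequently, for any indices $i, i' \in V$, there is a directed path in $G'$ from any of $w_i^Q, w_i^T, x_i^T, x_i^Q$ to $w_{i'}^T$ if and only if $i = i'$ or there is a directed $i$-to-$i'$ path in $G$. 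In particular, a vertex of $V'$ indexed by $i$ lies on a cycle in $G'$ iff $i$ lies on a cycle in $G$.

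Next, I would note that appending the input-related edges $(w_i^T, u_i^T)$ for $i \in S$ and $(u_i^T, u_i^Q)$ to form $\hat{G}$ creates no new cycles, since $u_i^T$ and $u_i^Q$ act as sinks. By Lemma \ref{lemma:equivalent_graph}, the conditions of Lemma \ref{lemma:connectivity_sufficient_general} hold precisely when every node of $V'$ lying on a cycle of $G'$ can reach some $w_{i'}^T$ with $i' \in S$ in $\hat{G}$. Combining this with the pipeline observation yields the equivalence: every node $i$ of $G$ that lies on a cycle is path-connected in $G$ to some input node. In the free-parameter formulation of \cite{liu2011controllability}, where each state typically carries a free diagonal entry (hence a self-loop), every node belongs to a cycle and the condition collapses to the stated ``each node is path-connected to an input node.''

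The step I expect to be the main obstacle is the final bookkeeping: reconciling the cycle-only condition produced by Lemma \ref{lemma:equivalent_graph} with the stronger-sounding ``each node'' formulation in the lemma statement. I would handle this either by (i) invoking the standing free-parameter assumption that diagonal entries of $A$ are free (so every node is on a self-loop), or (ii) noting that non-cycle nodes contribute no constraint to $V''$ and can be absorbed into the ``path-connected'' language without loss of generality. Apart from this reconciliation, the argument is a direct inspection of the edges enumerated in Lemma \ref{lemma:free_auxiliary}, exactly analogous to the explicit path construction (\ref{eq:consensus_aux_path}) used in the proof of Lemma \ref{lemma:consensus_input_connected}.
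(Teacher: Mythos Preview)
Your proposal is correct and follows essentially the same approach as the paper: an explicit translation of reachability in $G'$ to reachability in $G$ via the four-step pipeline $w^{Q}\to w^{T}\to x^{T}\to x^{Q}\to w^{Q}$ induced by the edge set of Lemma~\ref{lemma:free_auxiliary}. In fact you are more thorough than the paper, which writes out only the forward direction (the path construction analogous to (\ref{eq:consensus_aux_path})) and does not address the cycle-versus-``each node'' reconciliation you flag; your handling of that point via option (ii) is the right reading, since nodes not on a cycle impose no constraint in Lemma~\ref{lemma:connectivity_sufficient_general}.
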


\begin{proof}
Suppose that a node $i$ is path-connected to an input node $i^{\prime}$ in $G$, where the path is given by $(i, i_{0}),\ldots,(i_{r-1},i_{r}), (i_{r},i^{\prime})$, letting $i_{r+1} = i^{\prime}$.  Then the corresponding path in $G^{\prime}$ is equal to
\begin{eqnarray*}
\pi &=& \{(w_{i}^{Q}, w_{i}^{T}), (w_{i}^{T}, x_{i_{0}}^{T}), (x_{i_{0}}^{T}, x_{i_{0}}^{Q}), (x_{i_{0}}^{Q}, w_{i_{0}}^{Q})\} \\
&& \cup \bigcup_{l=0}^{r}{\{(w_{i_{l}}^{Q}, w_{i_{l}}^{T}), (w_{i_{l}}^{T}, x_{i_{l+1}}^{T}), (x_{i_{l+1}}^{T}, x_{i_{l+1}}^{Q}), (x_{i_{l+1}}^{Q}, w_{i_{l+1}}^{Q})\}} \\
&& \cup (w_{i^{\prime}}^{Q}, w_{i^{\prime}}^{T})
\end{eqnarray*}
\end{proof}

We now investigate the other matroid constraint of Lemma \ref{lemma:matroid_non_input}.  When all entries of $A$ are free, the constraint reduces to $$\mbox{rank}(\mathcal{M}([I|0|0]) \vee \mathcal{M}([I|T_{A}|T_{B}(S)])) = 2n,$$ which is in turn equivalent to the second matroid being full rank.  Hence the condition can be reduced to $\mbox{rank}(\mathcal{M}([I|T_{A}|T_{B}(S)])) = n$.  

%
%

Lemma \ref{lemma:free_connected} implies that, if a system with free parameters is strongly connected, then it suffices to find an input set such that the matroid of Lemma \ref{lemma:matroid_non_input} is full rank.  A minimum-size input set such that $\mbox{rank}(\mathcal{M}([I|T_{A}|T_{B}(S)])) = n$ can be found efficiently using the greedy algorithm. This enables efficient computation of an input set with the same size as in \cite{liu2011controllability}, but through the matroid optimization framework.

 Finally, we observe that, by Lemma \ref{lemma:free_connected}, Algorithm \ref{algo:Joint_PC} returns a set $S$ that satisfies a $(1-1/e)$ optimality bound, by the argument of Theorem \ref{theorem:consensus_optimality}. 

%
\section{Numerical Study}
\label{sec:numerical}
We numerically evaluated our framework using Matlab.  We considered the consensus network case of Section \ref{subsec:consensus}.  Network topologies were generated by placing nodes at uniform random positions within a square region, and creating a link $(i,j)$ if node $i$ is within the communication range of node $j$.  The range of each node was chosen uniformly at random from the interval $[0,600]$.  We investigated the minimum-size set of input nodes for structural controllability, as well as selection of input nodes for joint performance and controllability.

\begin{figure}[!ht]
\centering
\includegraphics[width=3in]{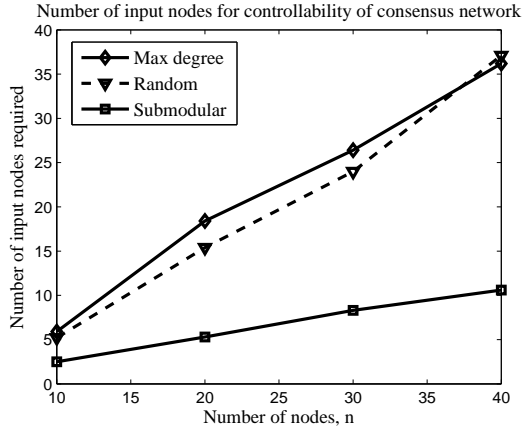}
\caption{Minimum-size input set for structural controllability in a consensus network. The submodular optimization approach is compared to max degree-based and random input selection.  The submodular optimization approach typically requires roughly one-quarter of the network to be controlled, while the random and degree-based heuristics select nearly all network nodes before controllability is satisfied.}  
\label{fig:controll}
\end{figure}

In the case of selecting the minimum-size set of input nodes for structural controllability, we considered networks of size $n = \{10,20,30,40\}$.  The deployment area was selected to yield average node degrees $d=3$.   We compared our submodular optimization approach with selecting high degree nodes as inputs, as well as selecting random nodes to act as inputs.  The submodular optimization approach required fewer input nodes to satisfy controllability, with the other heuristics selecting nearly all network nodes before controllability is satisfied.  For all schemes, the number of input nodes was increasing in the network size. 

\begin{figure}[!ht]
\centering
\includegraphics[width=3in]{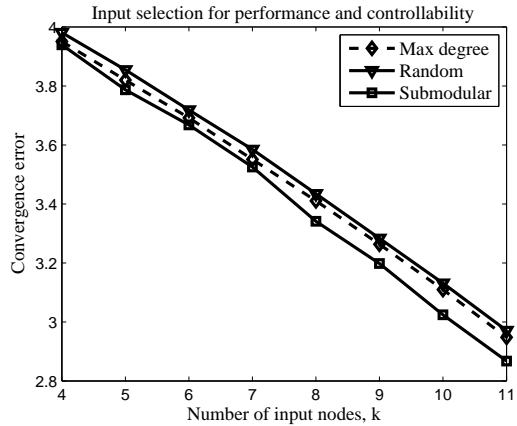}
\caption{Convergence error when input nodes are selected to minimize convergence error while satisfying controllability. The number of nodes $n$ was equal to $20$.  The submodular optimization approach provided lower convergence error than degree-based and random selection algorithms, especially as the number of input nodes increased.} 
\label{fig:joint_PC}
\end{figure}

We evaluated selection of input nodes in order to minimize the convergence error with controllability as a constraint.  The convergence error was defined as $||\mathbf{x}(t) - x^{\ast}\mathbf{1}||_{2}$, where $x^{\ast}$ is the state of the input nodes, $t=1$, and the initial state and edge weights were chosen uniformly at random.  The number of nodes was equal to $20$, while the deployment area was chosen to achieve an average degree of $2$. The submodular optimization approach provided lower convergence error than the degree-based and random heuristics, while also satisfying controllability from the input set.  As the number of input nodes increased, the gap between the submodular optimization approach and the other heuristics increased.   
\section{Conclusions}
\label{sec:conclusion}
In this paper, we studied the problem of input selection for joint performance and controllability of structured linear descriptor systems.  Our main contribution was to prove that structural controllability of linear descriptor systems can be mapped to two matroid constraints, representing controllability of the zero and nonzero modes of the system.  We demonstrated that, by exploiting this matroid structure, a minimum-size set of input nodes to guarantee structural controllability of such systems can be selected in polynomial time via matroid intersection algorithms.  We further showed that selection of input nodes for joint performance and controllability can be formulated as a submodular maximization problem subject to two matroid basis constraints.  We presented polynomial-time algorithms for obtaining a continuous solution to the input selection problem, providing a $(1-1/e)$ optimality bound, which can then be rounded to obtain a feasible input set.  We demonstrated that, when the objective function is modular, the optimal input selection for performance and controllability can be computed in polynomial time.

We investigated input selection in systems where the graph representation of the system is strongly connected, and found that for almost all systems of this type the number of matroid constraints can be reduced from two to one.  This  led to an $O(n)$ algorithm for selecting a minimum-size set of input nodes for structural controllability, as well as more efficient polynomial-time algorithms for approximating the optimal input set up to a factor of $(1-1/e)$.  We studied linear consensus systems, double integrator systems, and systems consisting of free parameters within our framework, and showed that the additional structure of each system provided a provable $(1-1/e)$ optimality bound for input selection based on performance and controllability.

\bibliographystyle{siam}
\bibliography{SIAM_Controllability}

\section{Appendix}
In this appendix, we provide proofs of Lemma \ref{lemma:connectivity_sufficient_general} and Theorem \ref{theorem:Joint_PC}.

 The steps in the proof of Lemma \ref{lemma:connectivity_sufficient_general} follow those in \cite{murota1987refined}.  First, define a set of coefficients on the edges $\hat{E}$ of graph $\hat{G}$.  The coefficient $\gamma(e)$ on edge $e \in \hat{E}$ is defined by
 \begin{equation}
 \label{eq:gamma}
 \gamma(e) = \left\{
 \begin{array}{ll}
 r_{i}, & e = (w_{i}^{T},w_{i}^{Q}), m(w_{i}^{T}) \neq w_{i}^{Q} \\
 -r_{i}, & e = (w_{i}^{T}, w_{i}^{Q}), m(w_{i}^{T}) = w_{i}^{Q} \\
 c_{i}, & e = (x_{i}^{T}, x_{i}^{Q}), i \notin J \\
 -c_{i}, & e = (x_{i}^{T}, x_{i}^{Q}), i \in J \\
 1, & e = (w_{i}^{T}, x_{j}^{T}), m(w_{i}^{T}) \neq x_{j}^{Q} \\
 -1, & e = (w_{i}^{T}, x_{j}^{T}), m(w_{i}^{T}) = x_{j}^{Q}\\
 0, & \mbox{else}
 \end{array}
 \right.
 \end{equation}

 Now, let $\tilde{\Omega} = \Omega\Omega_{J \cup J_{1}}^{-1}$ and $V^{-} = \{v^{Q} : \tilde{\Omega}_{vj} \neq 0 \mbox{ for some } j \in J_{1}\}$.    The following appears as Theorem 4.7 of \cite{murota1987refined}, albeit with slightly modified notation.

 \begin{theorem}
 \label{theorem:murota}
 Let $\tilde{V}$ denote the set of vertices that are not reachable to $V^{-}$, and let $\tilde{G}$ denote the subgraph of $\hat{G}$ induced by $\tilde{V}$.  Then the condition $\mbox{rank}((zF-A)|B) = n$ holds for almost any choice of the free parameters if and only if the sum of the $\gamma(e)$ along any directed cycle in $\tilde{G}$ is zero.
 \end{theorem}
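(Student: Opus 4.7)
The plan is to reduce the rank condition $\mathrm{rank}((zF-A)\,|\,B) = n$ (for all $z \in \mathbb{C}$) to the non-vanishing of a single polynomial in $z$ and the free parameters, expand that polynomial combinatorially over cycle covers of the auxiliary graph, and then identify exactly which cancellations can occur generically. The conclusion will follow by matching the coefficient structure of the surviving terms to the cycle sums $\sum_{e \in C} \gamma(e)$.

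First I would convert the rank condition into a square determinant. Because the columns of $\Omega$ indexed by $J \cup J_1$ form an invertible $(n+k) \times (n+k)$ block, the matrix $\tilde\Omega = \Omega \Omega_{J \cup J_1}^{-1}$ carries the identity on those rows and is supported on $\tilde\Omega$-edges that precisely trace the auxiliary graph $\hat G$. I would then assemble, from the rows of $\tilde\Omega$ indexed by the $w$-vertices (augmented by the $T_A$ and $z T_F$ contributions) together with rows for the $x$- and $u$-vertices, a square matrix whose determinant is (up to a nonzero invertible factor) the generic $n \times n$ minor of $(zF - A\,|\,B)$. The condition of the theorem then reads: this determinant, viewed as a polynomial in $z$ and the symbols $r_i, c_j$ labeling the free parameters, is not identically zero.

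Next I would apply the Coates signal-flow expansion to express that polynomial as a signed sum over cycle covers of $\hat G$. Each edge in a cycle cover contributes the scalar assigned to it in \eqref{eq:gamma}, and the overall sign of a cycle cover is governed by the parities tracked by the matching $m$; the bookkeeping in \eqref{eq:gamma} is designed precisely so that the edge weight equals $\gamma(e)$. I would then split every cycle cover into two parts: cycles whose vertex sets meet $V^-$, and cycles contained in $\tilde V$. Cycles touching $V^-$ necessarily use a column indexed in $J_1$, and therefore their monomials carry factors of the input-related free parameters of $T_B$ that appear nowhere else. A standard algebraic-independence argument then shows that the totality of all such ``outside'' cycle-cover monomials can neither cancel against one another nor against the contribution of cycles lying in $\tilde G$.

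The remaining and most delicate step is the restriction to cycles contained in $\tilde G$. I would fix any completion of the cover on $\hat V \setminus \tilde V$ and then sum over how the cover is chosen on $\tilde V$. Using the matching $m$, every cycle $C \subset \tilde G$ can be toggled against the trivial self-loop cover of its vertices obtained by following $m$; after collecting signs from \eqref{eq:gamma}, the difference between the two alternatives factorizes as $\bigl(\sum_{e \in C} \gamma(e)\bigr)$ times a common monomial in $r_i$ and $c_j$ that is generically nonzero. Summing over all choices, the polynomial vanishes identically in the free parameters if and only if $\sum_{e \in C}\gamma(e) = 0$ for every cycle $C \subseteq \tilde G$, which is exactly the stated criterion. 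The hard part is this last factorization: one must verify both that a cycle in $\tilde G$ can be independently exchanged with its $m$-image without disturbing the rest of the cover (this is where the requirement that $\tilde V$ be closed under ``reachability to $V^-$'' is essential, since it guarantees no edge leaves $\tilde G$ into $V^-$-territory) and that the signed sum of the two alternatives equals $\sum_{e \in C}\gamma(e)$ up to the claimed generic prefactor.
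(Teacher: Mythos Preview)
The paper does not prove this theorem at all: it is quoted without proof as Theorem~4.7 of \cite{murota1987refined} and then used as a black box in the one-line derivation of Lemma~\ref{lemma:connectivity_sufficient_general}. Your proposal therefore attempts substantially more than the paper does.

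On the merits of your sketch: the overall architecture---pass to a square determinant through $\tilde\Omega$, expand over cycle covers of $\hat G$, and separate cycles that reach $V^{-}$ from cycles confined to $\tilde G$---is the right scaffolding and matches Murota's original development. The gap is in your final step. When you toggle a cycle $C\subseteq\tilde G$ against the $m$-loops on its vertex set, the two cycle-cover contributions differ by a \emph{multiplicative} ratio of edge weights, not by a common monomial times the additive quantity $\sum_{e\in C}\gamma(e)$. Determinantal cycle-cover expansions are products over edges; there is no mechanism by which subtracting two such products yields a factor equal to a \emph{sum} of the $\gamma(e)$. In Murota's argument the additive cycle condition enters for a different reason: the $\gamma(e)$ record degree shifts (in the pencil variable $z$, equivalently valuations), and $\sum_{e\in C}\gamma(e)=0$ for every cycle in $\tilde G$ is exactly the condition that all surviving monomials share the same leading degree, so that for generic free parameters their coefficients cannot cancel. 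Without this degree-tracking layer your step~5 does not establish either direction, and in particular you have no way to conclude non-vanishing in the ``if'' direction.
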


 We now prove Lemma \ref{lemma:connectivity_sufficient_general}.

 \begin{proof}[Proof of Lemma \ref{lemma:connectivity_sufficient_general}] If the condition of Lemma \ref{lemma:connectivity_sufficient_general} holds, then $\tilde{V}$ does not contain any cycle.  Hence the condition of Theorem \ref{theorem:murota} holds automatically, and we have $\mbox{rank}((zF-A)|B) = n$ for almost any free parameters.
 \end{proof}

In order to prove Theorem \ref{theorem:Joint_PC}, we first prove a sequence of lemmas, which will establish that $F(\mathbf{y}(1)) \geq (1-1/e)f(S^{\ast})$.  The lemmas follow Lemmas 3.1--3.3 of \cite{calinescu2011maximizing}, however, the results of \cite{calinescu2011maximizing} are for a single matroid constraint, instead of two matroid basis constraints as in Theorem \ref{theorem:Joint_PC}.

\begin{lemma}
\label{lemma:Joint_PC_1}
Let $y \in [0,1]^{n}$ and let $R \subseteq V$ denote a random set such that $j \in R$ with probability $y_{j}$.  Then $$f(S^{\ast}) \leq F(y) + \max_{I \in \hat{\mathcal{B}}_{1} \cap \hat{\mathcal{B}}_{2}}{\sum_{j \in I}{\mathbf{E}(f_{R}(j))}},$$ where $f_{R}(j) = f(R \cup \{j\}) - f(R)$.
\end{lemma}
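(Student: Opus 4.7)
The plan is to combine monotonicity and submodularity of $f$ with the fact that $S^{\ast}$ itself is a feasible choice for the maximization on the right-hand side. Since by Lemma \ref{lemma:joint_perf_controll_equiv} the optimum $S^{\ast}$ lies in $\hat{\mathcal{B}}_{1} \cap \hat{\mathcal{B}}_{2}$, the strategy is to upper-bound $f(S^{\ast})$ by something that splits cleanly into an $F(y)$ term plus a sum of marginal gains indexed over $S^{\ast}$, and then relax that sum into the maximum over $\hat{\mathcal{B}}_{1} \cap \hat{\mathcal{B}}_{2}$.

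Concretely, I would proceed in the following order. First, for any realization of $R$, monotonicity of $f$ yields $f(S^{\ast}) \leq f(R \cup S^{\ast})$. Second, a standard telescoping argument using submodularity (adding the elements of $S^{\ast} \setminus R$ to $R$ one at a time and bounding each incremental gain by the marginal gain with respect to $R$ alone) gives
\begin{equation*}
f(R \cup S^{\ast}) - f(R) \;\leq\; \sum_{j \in S^{\ast}} \bigl(f(R \cup \{j\}) - f(R)\bigr) \;=\; \sum_{j \in S^{\ast}} f_{R}(j).
\end{equation*}
Combining these two inequalities yields, pointwise in $R$, the bound $f(S^{\ast}) \leq f(R) + \sum_{j \in S^{\ast}} f_{R}(j)$.

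Third, I would take expectations over the random set $R$ (where $j \in R$ independently with probability $y_{j}$). By the definition of the multilinear relaxation, $\mathbf{E}[f(R)] = F(y)$, so
\begin{equation*}
f(S^{\ast}) \;\leq\; F(y) + \sum_{j \in S^{\ast}} \mathbf{E}[f_{R}(j)].
\end{equation*}
Finally, since $S^{\ast} \in \hat{\mathcal{B}}_{1} \cap \hat{\mathcal{B}}_{2}$ by Lemma \ref{lemma:joint_perf_controll_equiv}, it is in particular a feasible candidate for the maximum, so
\begin{equation*}
\sum_{j \in S^{\ast}} \mathbf{E}[f_{R}(j)] \;\leq\; \max_{I \in \hat{\mathcal{B}}_{1} \cap \hat{\mathcal{B}}_{2}} \sum_{j \in I} \mathbf{E}[f_{R}(j)],
\end{equation*}
which delivers the claimed inequality.

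There is no real obstacle here, since every step is a direct invocation of a standard property; the only care needed is in the telescoping submodularity step, where one must order the elements of $S^{\ast} \setminus R$ arbitrarily and repeatedly apply the definition of submodularity to compare each marginal to the marginal at $R$, and in verifying that $S^{\ast}$ is indeed a common basis of $\hat{\mathcal{M}}_{1}$ and $\hat{\mathcal{M}}_{2}$ (which is exactly the content of Lemma \ref{lemma:joint_perf_controll_equiv}, so no additional work is required).
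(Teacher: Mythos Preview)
Your proposal is correct and follows essentially the same approach as the paper's proof: bound $f(S^{\ast})$ pointwise by $f(R) + \sum_{j \in S^{\ast}} f_{R}(j)$, take expectations to obtain $F(y)$, and then replace the sum over $S^{\ast}$ by the maximum over $\hat{\mathcal{B}}_{1} \cap \hat{\mathcal{B}}_{2}$. Your version is in fact slightly more careful, since you separate out the monotonicity step $f(S^{\ast}) \leq f(R \cup S^{\ast})$ from the submodular telescoping step, whereas the paper compresses both into a single line ``by submodularity.''
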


\begin{proof}
By submodularity, we have that $f(S^{\ast}) \leq f(R) + \sum_{j \in S^{\ast}}{f_{R}(j)}$.  Taking expectation over $R$ yields
$$f(S^{\ast}) \leq \mathbf{E}(f(R)) + \sum_{j \in S^{\ast}}{\mathbf{E}(f_{R}(j))} \leq F(y) + \max_{I \in \mathcal{I}}{\sum_{j \in I}{\mathbf{E}(f_{R}(j))}},$$ as desired.
\end{proof}

Lemma \ref{lemma:Joint_PC_1} establishes an optimality result for an idealized version of Algorithm \ref{algo:Joint_PC} where computation is performed over the actual values of $\mathbf{E}(f_{R}(j))$ instead of estimates computed via random sampling.  The following lemma introduces bounds on the errors introduced by sampling.

\begin{lemma}
\label{lemma:Joint_PC_2}
With high probability, at each time $t$ the algorithm finds a set $I(t)$ such that $$\sum_{j \in I(t)}{\mathbf{E}(f_{R(t)}(j))} \geq (1-2k\delta)f(S^{\ast}) - F(y(t)).$$
\end{lemma}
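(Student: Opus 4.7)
The plan is to reduce the statement to Lemma~\ref{lemma:Joint_PC_1} together with a concentration-of-measure argument controlling the error introduced by estimating the expectations $\omega_j(t) = \mathbf{E}[f_{R(t)}(j)]$ via sampling. Lemma~\ref{lemma:Joint_PC_1} already gives
\[
\max_{I \in \hat{\mathcal{B}}_{1} \cap \hat{\mathcal{B}}_{2}} \sum_{j \in I} \omega_{j}(t) \;\geq\; f(S^{\ast}) - F(y(t)),
\]
so it suffices to show that the set $I(t)$ returned by \textsc{Max\_Weighted\_Basis} achieves a value for the true expectations that falls short of the maximum by at most $2k\delta\, f(S^{\ast})$.

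First I would observe that, in practice, the weights $\omega_{j}(t)$ are approximated by an empirical average $\tilde{\omega}_{j}(t)$ obtained by drawing a sufficient number of samples of $R(t)$ according to $y(t)$. Using monotonicity and submodularity of $f$, each sampled marginal $f(R(t) \cup \{j\}) - f(R(t))$ lies in $[0, f(S^{\ast})]$, after noting that adding any single element increases $f$ by at most $f(S^{\ast})$ (since $f$ is monotone and $f(\emptyset) \leq f(S^{\ast})$, and a greedy exchange bounds a single marginal gain by $f(S^{\ast})$). A Hoeffding/Chernoff bound then implies that, with a number of samples polynomial in $n$ and $1/\delta$, we have $|\tilde{\omega}_{j}(t) - \omega_{j}(t)| \leq \delta f(S^{\ast})$ simultaneously for all $j \in V$, after a union bound over the $n$ coordinates (and eventually over the $1/\delta = 9k^{2}$ time steps).

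Next I would exploit the fact that every common basis $I \in \hat{\mathcal{B}}_{1} \cap \hat{\mathcal{B}}_{2}$ has $|I| = k$, since $\mathrm{rank}(\hat{\mathcal{M}}_{1}) = \mathrm{rank}(\hat{\mathcal{M}}_{2}) = k$ by the construction in Lemma~\ref{lemma:joint_perf_controll_equiv}. Consequently, for any such basis $I$,
\[
\Big|\sum_{j \in I} \omega_{j}(t) - \sum_{j \in I} \tilde{\omega}_{j}(t)\Big| \;\leq\; k\delta\, f(S^{\ast}).
\]
Let $I^{\ast}$ denote the maximizer of $\sum_{j \in I} \omega_{j}(t)$ over $\hat{\mathcal{B}}_{1} \cap \hat{\mathcal{B}}_{2}$. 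Since \textsc{Max\_Weighted\_Basis} is optimal for the estimated weights $\tilde{\omega}_{j}(t)$,
\[
\sum_{j \in I(t)} \tilde{\omega}_{j}(t) \;\geq\; \sum_{j \in I^{\ast}} \tilde{\omega}_{j}(t).
\]
Applying the per-basis approximation bound to both $I(t)$ and $I^{\ast}$ and chaining the inequalities yields
\[
\sum_{j \in I(t)} \omega_{j}(t) \;\geq\; \sum_{j \in I^{\ast}} \omega_{j}(t) - 2k\delta\, f(S^{\ast}),
\]
and substituting the bound from Lemma~\ref{lemma:Joint_PC_1} gives the desired inequality $\sum_{j \in I(t)} \omega_{j}(t) \geq (1-2k\delta) f(S^{\ast}) - F(y(t))$.

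The main obstacle is making the ``with high probability'' quantifier rigorous: the sample bound must hold simultaneously across all $n$ candidate elements $j$ and across all $1/\delta = 9k^{2}$ iterations of the continuous greedy loop, which requires a careful choice of sample count and a union bound. A secondary subtlety is bounding marginal gains by $f(S^{\ast})$, since $f(S^{\ast})$ is unknown at runtime; this can be handled either by normalizing against a computable upper bound such as $\max_{j} f(\{j\}) \cdot k$, or by noting that monotone submodularity guarantees $f_{R}(j) \leq f(\{j\}) \leq f(S^{\ast})$ so the bound is structural rather than algorithmic.
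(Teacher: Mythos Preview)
Your proposal is correct and follows essentially the same approach as the paper, which simply defers to Lemma~3.2 of \cite{calinescu2011maximizing}; your sketch faithfully reconstructs that argument (Hoeffding-type concentration on the sampled marginals, union bound over elements and iterations, and a $k$-element-basis error accumulation), with the only adaptation being that the inner optimization is a weighted common basis of two matroids rather than a single matroid---a change that does not affect the analysis since \textsc{Max\_Weighted\_Basis} is exact and all common bases have cardinality $k$.
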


The proof is identical to Lemma 3.2 of \cite{calinescu2011maximizing} and is omitted.  Finally, we prove the optimality bound on the solution to the continuous relaxation.

\begin{lemma}
\label{lemma:Joint_PC_3}
With high probability, the fractional solution $y(1)$ found by solving the continuous problem satisfies $$F(y) \geq \left(1- \frac{1}{e} - \frac{1}{3d}\right)f(S^{\ast}).$$
\end{lemma}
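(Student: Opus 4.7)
The plan is to mimic the continuous-greedy analysis of Calinescu et al.\ \cite{calinescu2011maximizing}, adapted to the two-matroid-basis setting established by Lemmas \ref{lemma:Joint_PC_1} and \ref{lemma:Joint_PC_2}. First I would analyze how much $F(y(t))$ grows in one step of the outer loop of Algorithm \ref{algo:Joint_PC}. Since $y(t+\delta) = y(t) + \delta\cdot\mathbf{1}(I(t))$ and each entry of $y$ stays in $[0,1]$, a standard computation with the multilinear extension (using the fact that $\partial F/\partial x_j$ equals the expected marginal $\mathbf{E}[f_{R}(j)]$ with $R$ sampled according to $y$, together with submodularity to bound the second-order remainder) gives
\begin{equation*}
F(y(t+\delta)) - F(y(t)) \;\geq\; \delta \sum_{j \in I(t)} \mathbf{E}\!\left[f_{R(t)}(j)\right] \;-\; O(\delta^{2} k^{2})\,f(S^{\ast}),
\end{equation*}
where the error term comes from the $\binom{k}{2}$ pairwise interactions among the at most $k$ coordinates updated in one step and the monotonicity/submodularity bound on each marginal.

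Next, I would plug in the guarantee from Lemma \ref{lemma:Joint_PC_2}, namely $\sum_{j \in I(t)} \mathbf{E}[f_{R(t)}(j)] \geq (1-2k\delta)f(S^{\ast}) - F(y(t))$, yielding the one-step recursion
\begin{equation*}
F(y(t+\delta)) \;\geq\; F(y(t)) + \delta\bigl[(1-2k\delta)f(S^{\ast}) - F(y(t))\bigr] \;-\; O(\delta^{2}k^{2})\,f(S^{\ast}).
\end{equation*}
Writing $\Phi(t) = f(S^{\ast}) - F(y(t))$, this becomes $\Phi(t+\delta) \leq (1-\delta)\Phi(t) + O(\delta^{2}k^{2})\,f(S^{\ast})$. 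Iterating the $1/\delta$ steps of the algorithm and using $(1-\delta)^{1/\delta} \leq 1/e$ produces
\begin{equation*}
\Phi(1) \;\leq\; \tfrac{1}{e}\,f(S^{\ast}) + O(\delta k^{2})\,f(S^{\ast}),
\end{equation*}
and plugging in $\delta = 1/(9k^{2})$ makes the error term a small constant fraction of $f(S^{\ast})$, giving the stated bound (with the $1/(3d)$ term being the residual from the $O(\delta k^{2})$ correction).

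The main obstacle, and the place that requires real care, is the step bounding $F(y(t+\delta)) - F(y(t))$ from below. One must simultaneously (i) exploit the multilinearity of $F$ to separate the contribution of each coordinate in $I(t)$, (ii) bound the error from updating $k$ coordinates at once rather than one coordinate using pairwise submodularity, and (iii) ensure that the sampling estimates for $\omega_{j}(t)$ used inside \textsc{Max\_Weighted\_Basis} are accurate enough (with high probability over the polynomially many samples) that the deterministic guarantee of Lemma \ref{lemma:Joint_PC_2} can be invoked at every iteration via a union bound. A secondary technical point is that \textsc{Max\_Weighted\_Basis} returns the optimum over $\hat{\mathcal{B}}_{1} \cap \hat{\mathcal{B}}_{2}$, so Lemma \ref{lemma:Joint_PC_1} must be applied to the common-basis feasible region rather than to a single matroid; this is exactly why the lemma was stated for $I \in \hat{\mathcal{B}}_{1}\cap\hat{\mathcal{B}}_{2}$, and no further modification of the potential-function argument is needed.
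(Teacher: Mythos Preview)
Your proposal is correct and takes essentially the same approach as the paper, which simply states that the proof follows Lemma~3.3 of \cite{calinescu2011maximizing}. Your write-up actually supplies more detail than the paper does, and you correctly isolate the one place where the argument differs from the single-matroid case---namely that \textsc{Max\_Weighted\_Basis} optimizes over $\hat{\mathcal{B}}_{1}\cap\hat{\mathcal{B}}_{2}$, which is why Lemma~\ref{lemma:Joint_PC_1} was stated for that feasible region.
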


The proof follows that of Lemma 3.3 of \cite{calinescu2011maximizing}.  We now prove Theorem \ref{theorem:Joint_PC}.

\begin{proof}[Proof of Theorem \ref{theorem:Joint_PC}]
The fact that $F(y) \geq \left(1 - \frac{1}{3}\right)f(S^{\ast})$ follows directly from Lemma \ref{lemma:Joint_PC_3}.  The feasibility of $S$ follows from the SWAP\_ROUND algorithm, which preserves membership in the bases of both matroids at each iteration and returns an integral solution, corresponding to a common basis of $\hat{\mathcal{B}}_{1}$ and $\hat{\mathcal{B}}_{2}$.  The complexity of the procedure is dominated by the solution of the continuous problem, which is in turn determined by the cost of computing the maximum weight matroid intersection at each iteration.  Since there are $O(n^{2})$ iterations and the maximum weight matroid intersection has complexity $O(\tau n^{3})$ \cite{schrijver2003combinatorial}, where $\tau$ is the cost of testing independence in $\hat{\mathcal{M}}_{1}$ and $\hat{\mathcal{M}}_{2}$, the overall complexity is $O(\tau n^{5})$.
\end{proof} 

\end{document}